\documentclass[12pt]{amsart}

\usepackage[margin=1.15in]{geometry}

\usepackage{amsmath,amscd,amssymb,amsfonts,latexsym,wasysym, mathrsfs, mathtools,hhline,xcolor}
\usepackage[all, cmtip]{xy}
\usepackage{csquotes}
\usepackage{url}

\definecolor{hot}{RGB}{65,105,225}

\usepackage[pagebackref=true,colorlinks=true, linkcolor=hot ,  citecolor=hot, urlcolor=hot]{hyperref}%make all the references and links clickable

\usepackage{ textcomp }
\usepackage{ tipa }
\usepackage{tikz}
\usetikzlibrary{arrows.meta}

\usepackage{graphicx}
\usepackage[shortlabels]{enumitem}

\newlist{kotschickconditions}{enumerate}{1}
\setlist[kotschickconditions]{
  label=(\Alph*),
  ref=(\Alph*),
  leftmargin=*,
  itemsep=0pt
}

\newcommand{\condref}[1]{%
  \hyperref[cond:#1]{\ref*{cond:#1}}%
}

\theoremstyle{plain}
\newtheorem{theorem}{Theorem}%%[section]
\newtheorem{proposition}[theorem]{Proposition}

\newtheorem{lm}[theorem]{Lemma}

\newtheorem{corollary}[theorem]{Corollary}

\newtheorem{lemma}[theorem]{Lemma}
\newtheorem{thrm}[theorem]{Theorem}

\theoremstyle{definition}

\newtheorem{defn}[theorem]{Definition}
\newtheorem{question}[theorem]{Question}
\newtheorem{remark}[theorem]{Remark}

\newtheorem{ex}[theorem]{Example}
\newtheorem*{ex*}{Example}

\newtheorem*{conjecture*}{Conjecture}
\def\be{\begin{equation}}
\def\ee{\end{equation}}

\def\bt{\begin{thrm}}
\def\et{\end{thrm}}

\def\bc{\begin{cor}}
\def\ec{\end{cor}}

\def\br{\begin{rmk}}
\def\er{\end{rmk}}

\def\bp{\begin{prop}}
\def\ep{\end{prop}}

\def\bl{\begin{lm}}
\def\el{\end{lm}}

\def\bex{\begin{ex}}
\def\eex{\end{ex}}

\def\bd{\begin{defn}}
\def\ed{\end{defn}}

\DeclareMathOperator{\codim}{codim}              % codim
\DeclareMathOperator{\id}{id}                    % id

\DeclareMathOperator{\Alb}{Alb}

\DeclareMathOperator{\Bl}{Bl}

\def\bC{\mathbb{C}}

\def\bP{\mathbb{P}}
\def\bA{\mathbb{A}}

\def\cO{\mathcal{O}}

\def\bQ{\mathbb{Q}}

\def\bZ{\mathbb{Z}}

\def\bN{\mathbb{N}}

\def \Z{\mathbb Z}
\def \R{\mathbb R}
\def \Q{\mathbb Q}

\def \C{\mathbb C}

 \title[Invisible singularities]{Invisible singularities  in \\   complex algebraic geometry}

\author{Maur\'icio Corr\^ea}
\address{Universit\`a degli Studi di Bari Aldo Moro, Dipartimento di Matematica, Via Edoardo Orabona 4, 70125 Bari, Italy.}
\email{mauricio.barros@uniba.it}

\author{J\'anos Koll\'ar}
\address{Princeton University, Department of Mathematics, Fine Hall, Washington Road, Princeton, NJ 08544-1000, USA.}
\email{kollar@math.princeton.edu}

\author{Stefan Schreieder}
\address{Leibniz Universit\"at Hannover, Institut f\"ur Algebraische Geometrie, Welfengarten 1, 30167 Hannover, Germany.}
\email{schreieder@math.uni-hannover.de}

\author{Botong Wang}
\address{Department of Mathematics, University of Wisconsin-Madison,  480 Lincoln Drive, Madison, WI 53706-1388, USA.}
\email{wang@math.wisc.edu}
\date{\today}

\hypersetup{
  pdftitle={Invisible singularities  in complex algebraic geometry},
  pdfauthor={Maur\'icio Corr\^ea, J\'anos Koll\'ar, Stefan Schreieder and Botong Wang},
  pdfkeywords={Topology of varieties, Vanishing cycles, Fiber bundles, Fern\'andez~de~Bobadilla--Kollár conjecture, Kotschick conjecture, Universal covers of varieties, Fibrations over the circle, Holomorphic one-forms, Albanese morphisms, Aomoto complexes.} 
}
 
\subjclass[2020]{Primary 14D06, 14F45; Secondary 32Q55, 32S15, 55R10.}

\keywords{Topology of varieties, Vanishing cycle, Equisingularity, Fiber bundle, Fern\'andez~de~Bobadilla--Kollár conjecture, Kotschick conjecture, Universal cover of varieties, Fibration over the circle, Holomorphic one-form, Albanese morphism, Aomoto complex.}

\begin{document}

\begin{abstract}
We construct morphisms between smooth complex projective varieties that have singular fibers, but look topologically smooth.  
We use this to give negative answers to the following four conjectures and questions: the smoothness conjecture of Fern\'andez~de~Bobadilla and Koll\'ar \cite{FdBK}, a question of Koll\'ar and Pardon on universal covers \cite{KP}, Kotschick's conjecture on 1-forms \cite{K}, and a conjecture of Schreieder on Aomoto complexes \cite{S}. 
\end{abstract}

\maketitle
\section{Introduction} 
A central question of equisingularity theory is to understand the relationship between the algebraic, analytic and topological versions. After giving full answers for curves, Zariski outlined a  series of higher-dimensional questions in \cite{Z}. 
The general expectation was that the algebraic and topological concepts of equisingularity coincide.

After many positive results \cite{milnor-book, MR399088, teiss-sim, Massey, MR1849319}, Fern\'andez de Bobadilla \cite{FdB}  constructed  families of cubic hypersurfaces  $Y\to C$ over a curve $C$, for which the various  notions  introduced by  Zariski and Massey are not equivalent.
 In these, and the later examples in \cite{FdBK}, the total space $Y$ is always singular.

Note also that if $Y\to C$ is  any proper morphism from a smooth variety $Y$ to a curve $C$,  and $Y_c$ is a  fiber with at worst isolated singularities, then the retraction map from a nearby fiber $Y_{c'}$ to $Y_c$ is  an isomorphism on $\bQ$-homology iff $Y_c$ is smooth, see \cite{milnor-book}.

These led to the belief that, if $Y$ is smooth and $g\colon Y\to C$ is proper, then critical points of $g$ are always visible topologically; see
\cite{FdBK, LMW, MR4461010, MR4768419} for further discussions and  results. 

Nonetheless, our main examples show that the topology of $Y$ does not always detect the presence of non-isolated critical points of a morphism $g\colon Y\to C$.

\begin{theorem} \label{thm:main.invis.thm} 
There are morphisms $g\colon Y\to \bP^1$ from smooth complex projective varieties $Y$, where $g$ has singular fibers but looks topologically smooth. 
 That is,
 \begin{enumerate}[(i)] 
 \item  the $R^ig_*\Z_Y$ are constant sheaves on $\bP^1$,\label{item:main.invis.thm:1}
 \item $g$ is a homotopy fiber bundle with simply connected fibers (see Definition \ref{def:fiber-bundle}), 
 \label{item:main.invis.thm:2}
 \item all the fibers of $g$ are $PL$-manifolds, but $g$ is not a $C^0$-fiber bundle, and 
 \label{item:main.invis.thm:3}
 \item in the homotopy class of $g$ 
 there is a  locally trivial, $C^\infty$-fiber bundle $\tilde g\colon Y\to \bP^1$ that agrees with $g$ outside a small neighborhood of the singular fibers of $g$.\label{item:main.invis.thm:4}
 \end{enumerate} 
\end{theorem}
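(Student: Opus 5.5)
The plan is to build $g$ from a pencil whose singular fibers have non-isolated singularities of a very special kind: singular locus $\Sigma$ of positive dimension, with transversal singularity type whose Milnor fiber is contractible, or at least $\bZ$-acyclic and simply connected. Such fibers are invisible to homology and homotopy. The model I would aim for is a germ $f(x_1,\dots,x_n)$ with a non-isolated critical locus such that the local Milnor fiber $F_p$ at every point $p$ of the singular locus is contractible. Then the nearby cycles complex $\psi_g\Z$ agrees with $\Z$ on the singular fiber, and the vanishing cycles $\phi_g\Z_Y$ vanish. Concretely, I would take $Y\subset \bP^1\times W$ as the total space of a pencil $\{s F_0+tF_1=0\}$ of hypersurfaces in a smooth projective $W$, chosen so that:
\begin{itemize}
\item the base locus is smooth, so $Y$ is smooth;
\item the only singular fibers have singular locus along a curve or surface $\Sigma$;
\item near $\Sigma$ the function looks like $x_1^a x_2^b+(\text{higher terms})$, or like a Brieskorn-type transversal singularity whose link is a homotopy sphere after taking the product with $\Sigma$.
\end{itemize}
To keep $Y$ smooth while the fiber is singular along $\Sigma$, the pencil must be tangent to $\Sigma$ in a controlled way. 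Alternatively, I would construct $Y$ as a blow-up or branched cover along $\Sigma$ and check smoothness in local coordinates.

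Step (i) follows from the local computation. Since $\phi_g\Z_Y=0$ stalkwise, the cohomology of the singular fiber and of the nearby fiber coincide via the specialization map, and the monodromy is trivial. The sheaves $R^ig_*\Z$ are therefore local systems on $\bP^1$, and hence constant because $\bP^1$ is simply connected. For (ii), I would upgrade this using the local contractibility of the Milnor fibers. The retraction $Y_{c'}\to Y_c$ has fibers that are the local Milnor fibers, which are contractible, so it is a homotopy equivalence by a Vietoris–Begle/cell-like map argument. This gives a homotopy fiber bundle, provided the fibers are simply connected; I would arrange that by taking the smooth fiber to be a simply connected hypersurface of dimension at least $2$, using Lefschetz.

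For (iii), the PL-manifold claim needs the links of the singular points of $Y_c$ to be PL spheres. I would choose the transversal singularity so that its link is a homology sphere with nontrivial fundamental group, for example a Brieskorn $\Sigma(2,3,5)$-type link, taken in a product with enough extra dimensions that, by Edwards' double suspension theorem and cell-like approximation, the total link becomes a genuine sphere. At the same time the local structure near $\Sigma$ should keep a non-simply-connected link in a lower stratum, or the local monodromy of a non-isolated germ should differ from the identity up to isotopy. This would show that $g$ is not a $C^0$-fiber bundle: a $C^0$ bundle would force the singular fiber to be a topological manifold with a locally flat structure incompatible with the local germ. I would certify this by an invariant of the local link pair, such as the fundamental group of the complement of the singular fiber in a small ball, or a Milnor-type fibration obstruction.

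Step (iv) is an h-cobordism/s-cobordism argument. Remove a tubular neighborhood $g^{-1}(D)$ of the singular fiber. The region $g^{-1}(D)$ is then a smooth manifold with boundary $Y_{c'}\times S^1$ that is homotopy equivalent to $Y_{c'}$, and it should be diffeomorphic to $Y_{c'}\times D$. I would prove this with the h-cobordism theorem in high dimension, with simple connectivity killing the Whitehead torsion, applied to the complement of a fiber inside $g^{-1}(D)$. The fibration $\tilde g$ is then glued in.

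The main obstacle is the local construction: finding a germ that is simultaneously
\begin{itemize}
\item compatible with smoothness of the total space $Y$;
\item equipped with contractible Milnor fibers;
\item not topologically trivial.
\end{itemize}
I would first try explicit non-isolated germs of the form $f=x^p+y^q+z^r\cdot h$ along $\Sigma$ and compute Milnor fibers via the Thom–Sebastiani theorem, where contractibility of a join requires one factor to be contractible.
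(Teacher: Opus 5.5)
There is a genuine gap. It sits exactly at the step you flag as the main obstacle: the local model you are looking for cannot exist. Suppose $Y$ is smooth and $p$ is a critical point of $g$. By A'Campo's theorem, the Lefschetz number of the local Milnor monodromy at $p$ is $0$. A $\Q$-acyclic (a fortiori contractible) Milnor fiber would have Lefschetz number $1$. So the stalk of $\phi_g\Z_Y$ is nonzero at every critical point, whatever the dimension of the critical locus and whatever the transversal type. Your candidates ($x_1^ax_2^b$, Brieskorn germs, Thom--Sebastiani joins) visibly have non-acyclic Milnor fibers in any case. Steps (i) and (ii) rest on $\phi_g\Z_Y=0$ stalkwise and on a cell-like retraction $Y_{c'}\to Y_c$, so they have no input. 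Note also that nonzero local vanishing cycles are a topological invariant of $g$; this is exactly how the paper proves that $g$ is not a $C^0$-fiber bundle. Your discussion of $\Sigma(2,3,5)$-type links and double suspension conflates the link of the singular fiber (which governs the PL-manifold property) with the Milnor fiber of $g$.

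The missing idea is to let the vanishing cycles be locally nonzero but globally invisible.
\begin{itemize}
\item The paper starts from the cuspidal degeneration $zy^2+x^3+tz^3=0$. Its Milnor fiber is an affine elliptic curve with a $\mu_6$-action of characteristic polynomial $x^2-x+1$, so $T-I$ is invertible over $\Z$.
\item Passing to $(A\times Z)/\langle(\tau,\gamma)\rangle$, with $\tau$ a translation of order $6$ on an elliptic curve $A$, replaces the isolated critical point by the critical curve $A/\langle\tau\rangle$. Along this curve the vanishing cycles form a rank-$2$ local system with monodromy $T$. Its cohomology is governed by $\ker(T-I)$ and $\operatorname{coker}(T-I)$, hence vanishes, and $V\to\bP^1$ is a $\Z$-homology fiber bundle (Lemma~\ref{lem:CK-criterion}, Theorem~\ref{thm:CK-criterion}).
\item The price is infinite $\pi_1$ of the fibers. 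This is removed by embedding $V$ into a smooth family $P\to\bP^1$ with simply connected fibers and blowing up. The blow-up formula (Proposition~\ref{prop:blow-up}) preserves both the $\Z$-homology bundle property and non-submersiveness, and Whitehead's theorem gives (ii).
\item The resulting transversal type $x^2+y^3-u'w$ has sphere links by Brieskorn, but its Milnor fiber has $H_3\cong\Z^2$. This gives (iii).
\end{itemize}

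Finally, in your Step (iv) the boundary $g^{-1}(\partial D)$ is the mapping torus of the geometric monodromy, not a priori $Y_{c'}\times S^1$. The paper handles this with a relative h-cobordism over a square, followed by Cerf's pseudo-isotopy theorem to correct the trivialization on the last edge (Proposition~\ref{prop:smooth-fibration-over-Sigma}).
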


Theorem \ref{thm:main.invis.thm} disproves the smoothness conjecture of Fern\'andez de Bobadilla--Koll\'ar \cite[Conjecture 3]{FdBK}; see also Corollaries \ref{cor:thm:CK-criterion} and \ref{cor:homotopy-examples} below.  
\begin{remark} Our construction of these $g\colon Y\to \bP^1$ is completely explicit and simple.

The smallest example we have has $\dim Y=5$, 
the critical locus of $g$ has dimension 1, and, near any critical point, one can write $g$ as $(x_1, \dots, x_5)\mapsto x_1^2+x_2^2+x_3^2+x_4^3$ (in  suitable local analytic coordinates), see Remark \ref{rem:terminal}. 
Moreover, $g$ is birationally equivalent to the projection $\bP^4\times \bP^1\to \bP^1$, see Example \ref{ex:RC-4-fold-bundle}.

One can deduce from \cite{HS} that there are no such examples with $\dim Y\leq 3$, see Theorem \ref{thm:3-folds} below.  The question is open in dimension 4.

We construct the example in Theorem~ \ref{thm:main.invis.thm} by choosing the singularities of $g$ carefully. 
Then item \ref{item:main.invis.thm:1} in Theorem \ref{thm:main.invis.thm} follows from a  computation involving vanishing cycles.
For  simply connected fibers  \ref{item:main.invis.thm:1} implies \ref{item:main.invis.thm:2} by Whitehead's theorem.
Item \ref{item:main.invis.thm:3} follows  from the works of Brieskorn \cite{Brieskorn} and Milnor \cite{milnor-book}.
 
It turns out that \ref{item:main.invis.thm:2} 
implies \ref{item:main.invis.thm:4}. Here we rely on Smale's h-cobordism theorem \cite{Smale-2} and Cerf's pseudo-isotopy theorem \cite{Cerf}; see Proposition \ref{prop:smooth-fibration-over-Sigma} below. 
However these provide very little information about $\tilde g$ beyond its existence. 
 \end{remark}

We also get negative answers to  \cite[Question~27]{KP} and to
Kotschick's conjecture, 
first 
asked around 2013, but published only later in  \cite[Question 15]{K} and \cite{S,HS,SY,pietig}.  

\begin{corollary}\label{cor:consequences}
Let $E$ be an elliptic curve and $E\to \bP^1$ a double cover unramified over the critical values of $g\colon Y\to \bP^1$ from Theorem \ref{thm:main.invis.thm}. 
Set $f\colon X\coloneq Y\times_{\bP^1}E \to E$. 
Then
\begin{enumerate}[(i)]
\item  the universal
cover $\widetilde X$ has the homotopy type of a finite CW complex,
but $f$ is not smooth;\label{item:thm:consequences:2}
\item  every nonzero class
$u\in H^1(X,\mathbb R)$ can be represented by a real, closed one-form
without zeros, yet the harmonic representative of every such class has zeros. In particular, $X$ admits no holomorphic one-form
without zeros.\label{item:thm:consequences:3}
\end{enumerate}
\end{corollary}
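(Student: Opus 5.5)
We argue by pulling back the properties of $g$ listed in Theorem \ref{thm:main.invis.thm} along the double cover $\pi\colon E\to\bP^1$. Since $\pi$ is unramified over the critical values of $g$, the fiber product $X=Y\times_{\bP^1}E$ is smooth: near a ramification point $e$ of $\pi$, $g$ is a submersion over $\pi(e)$, so locally $Y\cong F\times \Delta$ and $X\cong F\times\Delta'$. Moreover $f\colon X\to E$ has singular fibers, namely the fibers over $\pi^{-1}$ of the critical values, which are isomorphic to singular fibers of $g$. Hence $f$ is not smooth. The two parts of the corollary then follow from structural consequences of items \ref{item:main.invis.thm:2} and \ref{item:main.invis.thm:4} for $f$.

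For \ref{item:thm:consequences:2}, we use that $f$ is a homotopy fiber bundle whose fiber $F$ is a simply connected compact manifold (pullback of item \ref{item:main.invis.thm:2}). This gives a long exact homotopy sequence, so $\pi_1(X)\cong\pi_1(E)=\Z^2$. The universal cover $\widetilde X$ is then the pullback of $X\to E$ along $\C\to E$. This is a homotopy fiber bundle over the contractible base $\C$, hence homotopy equivalent to $F$. Alternatively, we can use the $C^\infty$-bundle $\tilde f$ obtained by pulling back $\tilde g$ from item \ref{item:main.invis.thm:4}, which is homotopic to $f$. Either way $\widetilde X\simeq F$, a compact manifold, hence a finite CW complex. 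The point needing care is justifying that a homotopy fiber bundle in the sense of Definition \ref{def:fiber-bundle} pulls back and that its total space over a contractible base is equivalent to the fiber. Using $\tilde f$, which is an honest locally trivial bundle on the same manifold $X$, avoids this.

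For \ref{item:thm:consequences:3}, we first compute $H^1(X,\R)$. By the Leray spectral sequence and simple connectivity of the fibers, $H^1(X,\R)=f^*H^1(E,\R)$. So every class is $u=f^*\alpha$ with $\alpha\in H^1(E,\R)$, and we may take $\alpha$ to be the constant (harmonic) $1$-form, which is nowhere zero when $u\ne0$. Then $\tilde f^*\alpha$ represents $u$, since $\tilde f\simeq f$, and has no zeros because $\tilde f$ is a submersion. For the harmonic representative we use that $X$ is compact Kähler. Harmonic $1$-forms are real parts of holomorphic $1$-forms, and $H^0(X,\Omega^1_X)=f^*H^0(E,\Omega^1_E)$ by the dimension count $h^{1,0}(X)=h^{1,0}(E)=1$. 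So the harmonic representative of $u$ is $\mathrm{Re}(c\,f^*dz)$ for some $c\ne0$, which vanishes exactly where $df$ fails to be surjective, i.e.\ at the critical points of $f$. These exist because $f$ has singular fibers. The same argument shows every holomorphic one-form, being $c\,f^*dz$, has zeros.

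The main obstacle is not in this deduction but in the input: the construction of $\tilde g$ in item \ref{item:main.invis.thm:4}, which rests on the h-cobordism and pseudo-isotopy arguments of Proposition \ref{prop:smooth-fibration-over-Sigma}. Granting Theorem \ref{thm:main.invis.thm}, the remaining delicate points are small. One is confirming that $\tilde f$ genuinely represents the same cohomology pullback; this holds because it is homotopic to $f$. The other is the identification $H^1(X)=f^*H^1(E)$, which uses the local constancy of $R^1 f_*$ together with $R^1f_*\Z=0$.
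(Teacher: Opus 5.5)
Your proof is correct in substance, and part (ii) follows the paper (Theorem~\ref{thm:main2} and Corollary~\ref{cor:Kotschick}) essentially step for step. You pull back a constant form by a $C^\infty$-bundle $\tilde f\simeq f$. You then note that harmonic representatives are $\omega+\overline{\omega}$ with $\omega=c\,f^*dz$, which vanishes on the critical locus of $f$. Your count $h^{1,0}(X)=1$ replaces the paper's identification of $f$ with the Albanese map, and amounts to the same thing.

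Part (i) is where you diverge. The paper does not use the smoothing there. Since the fibers are simply connected, $f_*$ is an isomorphism on $\pi_1$, so $\widetilde X=X\times_E\C$. The map $\widetilde X\to\C$ is then a $\Z$-homology fiber bundle over a contractible base, with simply connected fibers and simply connected total space. Leray plus Whitehead (Corollary~\ref{cor:whitehead}) then make the inclusion of a compact fiber a homotopy equivalence. Your route through $\tilde f$ gives more: the $\pi_1$-isomorphism comes from the exact sequence of a genuine bundle, and you even get a diffeomorphism $\widetilde X\cong F\times\C$. The cost is that (i) then depends on the h-cobordism and Cerf input, which the paper's argument for (i) avoids.

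The one step you should change is producing $\tilde f$ by ``pulling back $\tilde g$''. The fiber product $Y\times_{\tilde g,\bP^1}E$ is a different subset of $Y\times E$ than $X$. Also, a homotopy of maps into $\bP^1$ need not lift along the branched cover $E\to\bP^1$. So neither ``on the same manifold $X$'' nor ``homotopic to $f$'' follows formally from item~\ref{item:main.invis.thm:4}.

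It can be arranged using features of the construction that the theorem does not record. Take $U$ to be small discs over which $E\to\bP^1$ is trivial. The map $\tilde g$ sends $g^{-1}(Q)$ into the square $Q$ from the proof of Proposition~\ref{prop:smooth-fibration-over-Sigma}, and the homotopy is the straight line in $Q$. You can then transport everything via local sections of $E\to\bP^1$.

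The clean fix, which is what the paper does, is to apply Proposition~\ref{prop:smooth-fibration-over-Sigma} directly to $f\colon X\to E$. Its hypotheses hold: $\dim X=5\geq 4$; $R^if_*\Z$ is the pullback of the local system $R^ig_*\Z$; and each fiber of $f$ is a fiber of $g$, hence simply connected.

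A smaller point: Ehresmann only gives a $C^\infty$ trivialization $Y\cong F\times\Delta$. Smoothness of $X$ over a branch point should be checked in holomorphic coordinates in which $g$ is a coordinate $t$, so that locally $X=\{t=s^2\}$.
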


Proofs  are given in Corollaries \ref{cor:KP} and \ref{cor:Kotschick}. 

A related construction disproves a conjecture proposed in \cite[Remark~1.7]{S}.

\begin{theorem}\label{thm:Aomoto-intro}
There is a smooth, complex, projective variety $X$ such that, for  every finite connected \'etale cover $\pi\colon X'\to X$ and  nonzero $\omega\in H^0(X,\Omega_{X}^1)$, the Aomoto complex $(H^\ast(X',\C),\wedge \pi^\ast \omega)$ is exact, but every real closed one-form on $X$ has a zero.
\end{theorem}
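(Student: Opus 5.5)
The plan is to build $X$ from the example of Theorem~\ref{thm:main.invis.thm}, in the same spirit as Corollary~\ref{cor:consequences}, but with the base curve of genus at least $2$ so that the one-forms on $X$ all come from the base. Concretely, let $g\colon Y\to \bP^1$ be as in Theorem~\ref{thm:main.invis.thm}, with simply connected fibers. Choose a smooth projective curve $C$ of genus $\ge 2$ and a finite map $C\to\bP^1$ that is unramified over the critical values of $g$. Then set $X\coloneq Y\times_{\bP^1}C$ with projection $f\colon X\to C$. Near each point $X$ is locally analytically isomorphic to $Y$, so $X$ is smooth and projective. Moreover, $f$ has singular fibers, and its critical points are non-isolated singularities of the type described in Remark~\ref{rem:terminal}.

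The first step is to identify the one-forms and the cohomology. Since $f$ is a homotopy fiber bundle with simply connected fiber $F$, we get $\pi_1(X)=\pi_1(C)$. Hence $H^1(X)=f^*H^1(C)$ and $H^0(X,\Omega^1_X)=f^*H^0(C,\Omega^1_C)$. The same holds for every finite connected étale cover $X'\to X$: it is $X'=X\times_C C'$ for an étale cover $C'\to C$, and it inherits all the properties (i)--(iv) of Theorem~\ref{thm:main.invis.thm}. For a nonzero $\omega$ on $X$, the pullback $\pi^*\omega=f'^*\eta$ for some nonzero holomorphic form $\eta$ on $C'$.

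The second step is exactness of the Aomoto complex. Because the $R^if'_*\bZ$ are constant (item (i)) and the fibers are simply connected, the Leray spectral sequence degenerates. This gives $H^*(X',\C)\cong H^*(C',\C)\otimes H^*(F,\C)$ as $H^*(C')$-modules, and I would justify this via the homotopy fiber bundle structure together with the Künneth/Leray–Hirsch theorem. The Aomoto complex then becomes $(H^*(C',\C),\wedge\eta)\otimes H^*(F)$. The complex $(H^*(C'),\wedge\eta)$ is exact for $\eta\ne0$ on a curve of genus $\ge1$: in degree $0$, $\eta\neq0$ makes $\wedge\eta$ injective; in degree $1$, the kernel of $\wedge\eta$ is the annihilator of $\eta$ under the intersection pairing, which has dimension $2g-1$, the same as the image of $\wedge\eta$; and in degree $2$, $\wedge\eta$ is surjective. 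This gives the claimed exactness.

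The third step, showing that every real closed one-form on $X$ has a zero, is the main obstacle. Every class in $H^1(X,\R)$ is pulled back from $C$, so a closed one-form $\alpha$ without zeros would give a nonsingular closed one-form in a class $f^*u$ with $u\ne0$. I would derive a contradiction from the fact that $\pi_1(X)=\pi_1(C)$ is a surface group. A nonvanishing closed one-form on $X$ with class $f^*u$ would make the corresponding infinite cyclic (or $\bZ^k$) cover homotopy finite, by Tischler/Latour. Its homotopy type would be that of $C_u\times F$ up to fibration, where $C_u$ is the cover of $C$ associated to $u$. But the cover of a genus $\ge2$ curve associated to a nonzero class has infinitely generated $H_1$, equivalently its Novikov homology is nonzero. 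Since $F$ is simply connected, the fundamental group of the cyclic cover is the infinitely generated kernel of $u\colon\pi_1(C)\to\bZ$. This contradicts finiteness, by the Novikov/Bieri–Neumann–Strebel condition: $\ker u$ is not finitely generated for surface groups of genus $\ge2$. The key point to check is that Latour's criterion requires the fundamental group of the cover to be finitely generated, and that this obstruction is genuinely homotopical. So the argument goes through even though $f$ looks topologically like a bundle, which is what makes the example consistent with exactness of the Aomoto complex.
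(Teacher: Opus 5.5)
\paragraph{The gap: your second step is false for genus at least $2$.} This breaks your construction. The complex $(H^*(C',\C),\wedge\eta)$ is \emph{not} exact when $g(C')\geq 2$. The image of $\wedge\eta\colon H^0(C',\C)\to H^1(C',\C)$ is just the line $\C\eta$. The kernel of $\wedge\eta\colon H^1(C',\C)\to H^2(C',\C)$ has dimension $2g-1$. So the cohomology in degree one has dimension $2g-2>0$. This is also forced on general grounds: an exact complex has Euler characteristic $0$, whereas $\chi(C')=2-2g$.

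You do not even need the Leray--Hirsch decomposition to see that your $X=Y\times_{\bP^1}C$ violates the statement already for $X'=X$. We have $H^1(X,\C)=f^*H^1(C,\C)$, and $f^*$ is injective on $H^2$. Hence $\ker(\wedge f^*\eta)\subset H^1(X,\C)$ is $(2g-1)$-dimensional, while the image of $H^0(X,\C)$ is one-dimensional.

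\paragraph{Why changing the base does not help.} Exactness of the base's Aomoto complex forces $g(C)=1$. But for $C=E$ elliptic, your $X$ is exactly the manifold of Corollary~\ref{cor:Kotschick}. That manifold \emph{does} carry closed one-forms without zeros: $R^if_*\Z$ is constant, the fibers are simply connected, and Proposition~\ref{prop:smooth-fibration-over-Sigma} applies.

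Your third step is a correct $\pi_1$-argument. However, it works only because $\ker(\pi_1(C)\to\Z)$ is infinitely generated for a hyperbolic surface group. That is precisely the regime where the Aomoto complex fails to be exact. For $\pi_1=\Z^2$ there is no such group-theoretic obstruction. In particular, the example of Theorem~\ref{thm:main.invis.thm} is the wrong input here, because it is a $\Z$-homology fiber bundle.

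\paragraph{The missing idea.} The obstruction to fibering over $S^1$ has to be integral-cohomological rather than group-theoretic. You need a map $h\colon X\to E$ to an elliptic curve with two properties:
\begin{enumerate}[(a)]
\item $Rh_*\Q_X$ is a sum of shifted trivial local systems. Then $H^*(X,\Q)$ is a free $H^*(E,\Q)$-module. Since $\pi_1(X)\cong\pi_1(E)$, every \'etale cover of $X$ is a base change along some $E'\to E$, so all the relevant Aomoto complexes are exact.
\item $Rh_*\Z_X$ is not locally constant at some point.
\end{enumerate}
Now suppose $X\to S^1$ were a fibration, and let $\widetilde X\to\widetilde E$ be the induced infinite cyclic cover. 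The preimages of the bad points form an infinite set. Proposition~\ref{prop:D-finite} then makes some $H^i(\widetilde X,\Z)$ infinitely generated. This contradicts the fact that $\widetilde X$ is homotopy equivalent to a compact fiber.

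The paper realizes this by blowing up $\bP^N\times E$ along the Debarre--Jiang--Lahoz surface $S=(F\times C)/\langle\sigma\rangle\to E$. Its double fibers make it a $\Q$-homology fiber bundle but not a $\Z$-homology fiber bundle.
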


\subsection*{Outline of the paper}
We work throughout this paper over the field of complex numbers.

In Section \ref{sec:homology-examples}, we construct flat, projective morphisms $f\colon X\to \Delta$ over the disc $\Delta$ with smooth total space $X$ such that $f$ is a $\Z$-homology fiber bundle but not a homotopy fiber bundle.
There are examples whose general fibers are bielliptic surfaces and whose special fiber is non-normal.
We also construct higher-dimensional examples with normal (and in fact terminal) fibers, cf.~Example \ref{PL.man.exmp}. 

All the above examples have nontrivial fundamental group.
In Section \ref{sec:homotopy-examples}, we use these examples and a blow-up construction to produce examples of non-smooth morphisms $f\colon X\to \Delta$ with smooth total space that are $\Z$-homology fiber bundles and whose fibers are simply connected.
By Whitehead's theorem, these maps are then homotopy fiber bundles, and we observe that they are not submersions.

In Section \ref{sec:smooth-fibration}, we prove a $C^\infty$-smoothing result up to homotopy for morphisms to Riemann surfaces, which shows in particular that item \ref{item:main.invis.thm:2} in Theorem \ref{thm:main.invis.thm} implies item \ref{item:main.invis.thm:4}.
In Section \ref{sec:main.invis.thm}, we globalize the constructions from Sections \ref{sec:homology-examples}--\ref{sec:homotopy-examples} and prove Theorem~\ref{thm:main.invis.thm}.
In Section \ref{sec:applications}, we discuss applications to the Koll\'ar--Pardon question and Kotschick's conjecture, and prove items \ref{item:thm:consequences:2} and \ref{item:thm:consequences:3} of Corollary \ref{cor:consequences}.
In Section~\ref{sec:KP-topological} we prove 
a topological equisingularity theorem for proper morphisms  with simply connected fibers to Riemann surfaces.

In Section \ref{sec:Aomoto} we apply constructions similar to those used in Theorem \ref{thm:main.invis.thm} to the rational cohomology torus of Debarre--Jiang--Lahoz \cite{DJL}. Using this construction together with the results in Section~\ref{sec:KP-topological}  proves Theorem~\ref{thm:Aomoto-intro}.
Finally, in Section~\ref{subsec:thm:3-folds} we use some of the positive results of \cite{HS} to prove Theorem~\ref{thm:3-folds}, which shows that counterexamples---such as those produced in Theorem~\ref{thm:main.invis.thm}---do not occur in dimension three.

\section{Homology fiber bundles that are not homotopy fiber bundles}\label{sec:homology-examples}

In this section we discuss the local situation and provide homology fiber bundles over the complex disc that are not homotopy fiber bundles. These will be used in the proof of Theorem \ref{thm:main.invis.thm}.

\subsection{Homology and homotopy fiber bundles} 
We start by recalling the following.

\begin{defn} \label{def:fiber-bundle}
Let $f\colon M\to N$ be a proper morphism between complex spaces.
It is a \emph{$\Z$-homology fiber bundle} if there is an open covering $N=\bigcup U_i$ such that for each $y\in U_i$, the inclusion $f^{-1}(y)\hookrightarrow f^{-1}(U_i)$ induces an isomorphism on integral homology $H_\ast(f^{-1}(y),\Z)\stackrel{\cong}\to H_\ast(f^{-1}(U_i),\Z)$, or, equivalently, on cohomology  $H^\ast(f^{-1}(U_i),\Z)\stackrel{\cong}\to H^\ast(f^{-1}(y),\Z)$.
This condition is equivalent to requiring that $R^if_\ast \Z$ be a local system for all $i$.

Similarly, $f$ is a \emph{homotopy fiber bundle} if there is a covering as above such that the inclusion $f^{-1}(y)\hookrightarrow f^{-1}(U_i)$ is a homotopy equivalence for all $y\in U_i$.
\end{defn}

We will use the following  direct consequence of Whitehead's theorem.

\begin{corollary}\label{cor:whitehead}
Let $f\colon M\to N$ be a proper morphism between  complex spaces.
Assume that $f$ is a $\Z$-homology fiber bundle with simply connected fibers. 
Then 
\begin{enumerate}[(i)]
    \item $f$ is a homotopy fiber bundle;\label{item:cor:whitehead:1}
    \item if $N$ is contractible, then $M$ has the homotopy type of a finite CW complex.\label{item:cor:whitehead:2}
\end{enumerate}
\end{corollary}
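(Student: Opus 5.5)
For (i), fix one of the open sets $U_i$ from Definition \ref{def:fiber-bundle}. I would first shrink the cover so that each $U_i$ is a small contractible neighborhood, for instance a conic neighborhood of a point. Since $f$ is proper, $f^{-1}(U_i)$ is then simply connected: it deformation retracts onto the central fiber. Alternatively, $f^{-1}(y)\hookrightarrow f^{-1}(U_i)$ is surjective on $\pi_1$ by properness and local conic structure, and $f^{-1}(y)$ is simply connected. I expect this reduction to be the main technical point. Because everything is a complex analytic space, $f^{-1}(U_i)$ and $f^{-1}(y)$ are triangulable (Łojasiewicz), so they have the homotopy type of CW complexes. The inclusion $f^{-1}(y)\hookrightarrow f^{-1}(U_i)$ is then a map between simply connected CW complexes that induces isomorphisms on all integral homology groups. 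By the homology version of Whitehead's theorem it is a homotopy equivalence, and this gives (i).

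A cleaner way to handle simple connectivity of $f^{-1}(U_i)$ is the following. Take $U_i$ to be a contractible Stein neighborhood of $y_0$ small enough that $f^{-1}(y_0)$ is a deformation retract of $f^{-1}(U_i)$; this is possible for proper maps of analytic spaces by triangulating $f$. Then $\pi_1(f^{-1}(U_i))=\pi_1(f^{-1}(y_0))=1$.

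For (ii), when $N$ is contractible, I would pick any $y\in N$ and show that $f^{-1}(y)\hookrightarrow M$ is a homotopy equivalence. The fiber $f^{-1}(y)$ is a compact analytic space, hence a finite CW complex, and this would finish the proof. To globalize, I would use that $R^if_*\Z$ is a local system on the contractible space $N$, hence constant. The Leray spectral sequence then degenerates to $H^*(M,\Z)\cong H^0(N,R^*f_*\Z)\cong H^*(f^{-1}(y),\Z)$, and this isomorphism is induced by restriction to the fiber. The same holds for homology with any coefficients, by universal coefficients or by using cohomology with arbitrary coefficients.

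For $\pi_1(M)=1$, I would use van Kampen. Cover $N$ by the $U_i$, each with simply connected preimage, and let $\{U_i\}$ be arbitrarily fine with connected pairwise intersections. Then $\pi_1(M)$ is generated by the images of the $\pi_1(f^{-1}(U_i))$ modulo loops coming from $\pi_1(N)=1$; more simply, $M\to N$ with simply connected fibers and connected $N$ gives $\pi_1(M)\to\pi_1(N)$ surjective with kernel generated by fiber loops. Whitehead's theorem again yields that $f^{-1}(y)\simeq M$, hence $M$ has the homotopy type of a finite CW complex.

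The main obstacle is making this $\pi_1$ argument rigorous without a genuine fibration. I would handle it by working with the homotopy fiber bundle from (i), which is a quasifibration-type condition over a good cover, and applying van Kampen along a path-connected refinement.
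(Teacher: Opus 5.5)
Your proposal is correct and follows essentially the same route as the paper. For (i), you take small contractible neighborhoods onto whose central fiber the preimage deformation retracts, get the homology isomorphism from Leray, and conclude with Whitehead; for (ii), simple connectivity of $M$ together with the homology isomorphism $M_y\to M$ gives the result by Whitehead again. The $\pi_1$ step you worry about is a one-line van Kampen appeal in the paper, and your ``more simply'' version is exactly Smale's Vietoris mapping theorem for homotopy \cite{Smale}, which the paper uses elsewhere for the same purpose, so the quasifibration detour is unnecessary.
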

\begin{proof} 
Let $y\in N$.
Then there is an open neighborhood $U_y\subset N$ of $y$ such that the fiber $M_y\coloneq f^{-1}(y)$ is a deformation retract of $M_{U_y}\coloneq f^{-1}(U_y)$.
Since the fibers of $f$ are simply connected, we can choose an open cover $N=\bigcup_{i\in I} U_i$ such that for all $i\in I$, $U_i$ is contractible and $f^{-1}(U_i)$ is simply connected.
For all $y\in U_i$, the inclusion $M_{y}\hookrightarrow M_{U_i}$ is a map between simply connected and finite CW complexes which induces an isomorphism on homology by the Leray spectral sequence.
Hence, $M_{y}\hookrightarrow M_{U_i}$ is a homotopy equivalence by Whitehead's theorem \cite[Corollary 4.33]{H}.
This concludes the proof of item \ref{item:cor:whitehead:1}.

In order to prove item \ref{item:cor:whitehead:2}, assume that $N$ is contractible.
By van Kampen's theorem, $M$ is then simply connected.
Moreover, 
$M_y\to M$ induces an isomorphism on homology and hence is a homotopy equivalence by Whitehead's theorem \cite[Corollary 4.33]{H}.
In particular, $M$ has the homotopy type of a finite CW complex.
\end{proof}

\subsection{The quotient construction}

Let $Z$ be a complex manifold of dimension $n$, and let
$\pi\colon Z\to \Delta$ be a projective morphism with finitely many
critical points $p_i\in Z$, $i\in I$, all lying over the origin $0\in\Delta$.
Let $M_i$ denote the Milnor fiber of $\pi$ at $p_i$. 
We assume that there is at least one critical point, i.e.~$I\neq \emptyset$.

Let $\gamma$ be an automorphism of $Z$ of  order $1<c<\infty$, that commutes with $\pi$ and
fixes each $p_i$. Let $T_i$ denote the induced action on
$H^{n-1}(M_i,\Z)$ and let $\chi_i(x)$ be its characteristic polynomial.
Let $A$ be an abelian variety and let $\tau\colon A\to A$ be a
translation of order $c$. Set
\begin{align} \label{def:pi-Y}
\pi_Y\colon Y\coloneq (A\times Z)/\langle(\tau,\gamma)\rangle
\longrightarrow \Delta,
\end{align}
where we take the quotient by the diagonal action. Note that $Y$ is
smooth and $\pi_Y$ is projective.

\begin{theorem}\label{thm:CK-criterion}
In the above notation, assume in addition that $\chi_i(1)=\pm1$ for
every $i$. Then $\pi_Y\colon Y\to\Delta$ is a $\Z$-homology fiber bundle but not a homotopy fiber bundle.
\end{theorem}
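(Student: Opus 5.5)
We need to show two things: $\pi_Y$ is a $\Z$-homology fiber bundle, and it is not a homotopy fiber bundle. Since $\pi$ has critical points only over $0$, the question is local over $0$. We compare the central fiber $Y_0=(A\times Z_0)/\langle(\tau,\gamma)\rangle$ with a nearby fiber $Y_t=(A\times Z_t)/\langle(\tau,\gamma)\rangle$ through the specialization map $Y_t\to Y_0$, up to homotopy. Because $\tau$ acts freely on $A$, both quotients are free quotients. Writing $G=\Z/c$, we get fiber bundles $Y_t\to A/\langle\tau\rangle$ with fiber $Z_t$, and $Y_0\to A/\langle\tau\rangle$ with fiber $Z_0$. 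Equivalently, $Y_t\simeq (EG\times A\times Z_t)/G$ and similarly for $Y_0$.

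\textbf{Step 1: the homology of $\pi_Y$.} The specialization map $Z_t\to Z_0$ is $\gamma$-equivariant. Its cofiber (relative homology) is a wedge of local contributions at the points $p_i$. Concretely, $H_*(Z_0,Z_t)$ is concentrated in degree $n$ and equals $\bigoplus_i H_{n-1}(M_i)$ (with the appropriate shift). This group carries the $G$-action induced by $\gamma$, which is $T_i$ on each summand, since $\gamma$ fixes each $p_i$. We then take the relative chain complex of $(Y_0,Y_t)$, which by the bundle structure is the $G$-coinvariant, or more precisely the Borel, construction of $C_*(A)\otimes C_*(Z_0,Z_t)$. Since $\tau$ is a free translation, $C_*(A)$ is a complex of free $\Z[G]$-modules, so
\[ H_*(Y_0,Y_t)\cong H_*\bigl(C_*(A)\otimes_{\Z[G]} V\bigr), \qquad V=\textstyle\bigoplus_i H_{n-1}(M_i). \]
The action of $\tau$ on $H_*(A)$ is trivial, because translation is homotopic to the identity. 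So there is a spectral sequence whose $E^2$-page is $H_p(A/\langle\tau\rangle; V)$, with local coefficients given by $V$ through $\pi_1\to G$. We now use the hypothesis. The generator acts on $V_i$ by $T_i$ with $\det(T_i-1)=\pm\chi_i(1)=\pm1$, so $T_i-1$ is invertible over $\Z$. Hence $V$ has no invariants and no coinvariants, and the same holds for every nontrivial power. The local system is then "acyclic": we pull back to the cyclic cover $A\to A/\langle\tau\rangle$, use that $A/\langle\tau\rangle$ is a torus with $\pi_1\to G$ surjective, and apply Künneth along a circle factor that maps onto $G$. Along that circle, $H_*(S^1;V)$ is computed by $T-1$, which is an isomorphism. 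Therefore $H_*(Y_0,Y_t;\Z)=0$, so $\pi_Y$ is a $\Z$-homology fiber bundle.

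\textbf{Step 2: $\pi_Y$ is not a homotopy fiber bundle.} We compare universal covers, or the $G$-covers induced by $A\to A/\langle\tau\rangle$. Pulling back along the cover $A\times Z\to Y$ gives the fibers $A\times Z_t\to A\times Z_0$. If $Y_t\to Y_0$ were a homotopy equivalence, it would induce an isomorphism on $\pi_1$ and lift to a homotopy equivalence of the corresponding covering spaces. Those covers are $A\times Z_t$ and $A\times Z_0$. The relative homology of this pair is $H_*(A)\otimes V$, which is nonzero because $I\neq\emptyset$ and $V\neq 0$. Here $V\neq0$ because a critical point has a nontrivial Milnor fiber, and $\chi_i(1)=\pm1$ does not force $V=0$ once $\deg\chi_i\ge1$; one still has to check that the empty determinant case is excluded, i.e.\ that isolated critical points have Milnor number $\geq1$. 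This contradiction shows $\pi_Y$ is not a homotopy fiber bundle.

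\textbf{Main obstacle.} The main difficulty is Step 1. We must set up the equivariant relative homology carefully, since $\gamma$ may also act nontrivially on $H_*(Z_t)$ and $H_*(Z_0)$; working with the relative pair avoids this. We must also make the acyclicity argument rigorous over $\Z$ rather than over $\Q$. To do this, we would filter by the skeleton of a $G$-CW structure on the torus, or directly use a circle in $A$ along which $\tau$ is the generating translation.
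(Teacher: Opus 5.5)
Your proof is correct. Step 2 is essentially the paper's argument: a homotopy fiber bundle structure on $\pi_Y$ would pass to the finite \'etale cover $A\times Z\to Y$, which is not even a $\Q$-homology fiber bundle because an isolated critical point has Milnor number $\mu\geq 1$.

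Step 1 is organized differently. The paper cuts each fiber $Y_t$ along $\gamma$-invariant Milnor balls \emph{after} passing to the quotient. It then uses Mayer--Vietoris to reduce to the pieces $(A\times(B_i\cap Z_t))/\langle(\tau,\gamma)\rangle$, and handles these by the separate local statement Lemma~\ref{lem:CK-criterion}, proved via K\"unneth and the Leray spectral sequence over a circle. You instead compute the $G$-equivariant relative homology upstairs, $H_*(Z_U,Z_t)\cong V=\bigoplus_i\tilde H_{n-1}(M_i)$ in the single degree $n$. You then descend via $C_*(A)\otimes_{\Z[G]}(-)$, which gives in one stroke $H_*(Y_U,Y_t)\cong H_{*-n}(A/\langle\tau\rangle;\mathcal V)$, with $\mathcal V$ the vanishing-cycle local system. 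This treats all critical points at once and makes the obstruction transparent.

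The price is two standard facts you should state explicitly:
\begin{itemize}
\item A complex of $\Z[G]$-modules with homology in one degree is quasi-isomorphic to that homology, so tensoring with the bounded free complex $C_*(A)$ only sees $V$.
\item Equivariance is automatic if you use the pair $(Z_U,Z_t)$ with $Z_U=\pi^{-1}(U)$, rather than a specialization map.
\end{itemize}
Your spectral sequence and the triviality of $\tau$ on $H_*(A)$ are superfluous, since $C_*(A)\otimes_{\Z[G]}V$ already is the chain complex with local coefficients. Pulling back to $A$ would trivialize $\mathcal V$, so that step does nothing. What does the work is your K\"unneth argument along a circle factor from which $\mathcal V$ is pulled back, exactly as in the paper.

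One side claim is false, though not load-bearing. Invertibility of $T-1$ over $\Z$ does not give invertibility of $T^k-1$ for every nontrivial power. In Example~\ref{ex:CK-cubic}, $\chi=x^2-x+1$ gives $\det(T^2-1)=3$, and $T^3=-1$ gives $\det(T^3-1)=4$. You only need the monodromy around the chosen circle, which is a generator $T^k$ with $\gcd(k,c)=1$. Writing $T=(T^k)^{k'}$ with $kk'\equiv 1\pmod{c}$ shows that $\det(T^k-1)$ divides $\det(T-1)=\pm1$. Alternatively, choose coordinates so that $\tau$ rotates only the first circle factor, as the paper does; then the monodromy is $T$ itself.
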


The smallest such $Z$, to be constructed in  Example~\ref{ex:CK-cubic}, 
has dimension 2. Choosing $A$ suitably gives the following.

\begin{corollary} \label{cor:thm:CK-criterion}
For every  $m\geq 2$
there is a flat projective morphism $\pi_Y\colon Y\to \Delta$ of smooth analytic varieties of relative dimension $m$, such that $\pi_Y$ is a $\Z$-homology fiber bundle but not a homotopy fiber bundle. 
\end{corollary}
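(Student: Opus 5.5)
The plan is to deduce this from Theorem~\ref{thm:CK-criterion}. We need a surface $Z$ with a projective morphism $\pi\colon Z\to\Delta$ having finitely many critical points over $0$. It should carry a finite-order automorphism $\gamma$ commuting with $\pi$ and fixing the critical points, with $\chi_i(1)=\pm1$ for each of them. This is exactly the input provided by Example~\ref{ex:CK-cubic}, which I take as given. A natural model is a pencil of cubic curves degenerating to a fiber with isolated singular points, with $\gamma$ a symmetry of order $c=3$. Then the monodromy-type action on $H^1$ of the Milnor fiber has characteristic polynomial like $x^2+x+1$, whose value at $1$ is $3$. Hence one needs the action to be primitive in a suitable sense, e.g.\ a cube root of unity acting on an $A_2$-type Milnor lattice with $\chi(x)=x^2-x+1$, giving $\chi(1)=1$. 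So $\gamma$ should be chosen to act with order $6$ (or order $3$ with eigenvalues primitive sixth roots after the sign twist of the Milnor lattice). I will rely on Example~\ref{ex:CK-cubic} for the precise choice. The relative dimension of $Z\to\Delta$ is then $1$.

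Next, for a given $m\ge2$, choose an abelian variety $A$ of dimension $m-1$ together with a translation $\tau$ of order $c$; such a translation exists since $A$ has torsion points of every order. Form $\pi_Y\colon Y=(A\times Z)/\langle(\tau,\gamma)\rangle\to\Delta$ as in \eqref{def:pi-Y}. The diagonal action is free, because $\tau$ and all its nontrivial powers act freely on $A$. Hence $Y$ is a smooth analytic variety and $\pi_Y$ is projective. Its fibers are quotients of $A\times Z_t$ by a free action, so they have dimension $(m-1)+1=m$. Flatness follows because $Y$ is smooth (hence Cohen--Macaulay), $\Delta$ is a smooth curve, and $\pi_Y$ is nonconstant on every component. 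Equivalently, every fiber is a divisor of pure relative dimension $m$, and miracle flatness applies.

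Finally, Theorem~\ref{thm:CK-criterion} shows that $\pi_Y$ is a $\Z$-homology fiber bundle but not a homotopy fiber bundle. The only genuine input is therefore the existence of the surface example with $\chi_i(1)=\pm1$. I expect this to be the main obstacle: arranging an automorphism fixing the singular points whose action on the vanishing lattice has no eigenvalue $1$ and unit determinant of $1-T_i$. Here I defer to Example~\ref{ex:CK-cubic}. If $Z$ has dimension $n$ instead, the same argument with $\dim A=m-n+1$ works for all $m\ge n-1+1$.
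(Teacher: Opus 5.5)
Your proposal follows the paper's own argument: take the surface family $S\to\Delta$ with the order-six automorphism $\xi_1$ from Example~\ref{ex:CK-cubic}, choose an abelian variety $A$ of dimension $m-1$ with a translation of order six, and apply Theorem~\ref{thm:CK-criterion}. Your remarks on freeness and flatness fill in details the paper leaves implicit. The one flaw is your aside about an automorphism of order $3$: its eigenvalues are cube roots of unity, so it can never give $\chi(x)=x^2-x+1$. The order-six map, which acts by $-1$ on $y$, is genuinely needed, but since you defer to the example this does not affect the proof.
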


In all the examples constructed below, the fibers $Y_t$ have the property that their fundamental group is the extension of a finite group by $\Z^{2r}$ for some $r\geq1$.

We first record the topological calculation used in the proof.

\begin{lemma} \label{lem:CK-criterion}
Let $n\geq 2$ and let $M$ be a topological space whose only nonzero cohomology groups are $H^0(M,\Z)=\Z$ and $H^{n-1}(M,\Z)\cong\Z^r$.
Let $\gamma$ be an automorphism of $M$ of order $c$, let $T$ be its
induced action on $H^{n-1}(M,\Z)$, and let $\chi_T(x)$ be the
characteristic polynomial of $T$.
Let $A$ be an abelian variety and let $\tau\colon A\to A$ be a
translation of order $c$. Then the projection
\[
(A\times M)/\langle(\tau,\gamma)\rangle\longrightarrow A/\langle\tau\rangle
\]
induces an isomorphism on integral cohomology if and only if
$\chi_T(1)=\pm1$.
\end{lemma}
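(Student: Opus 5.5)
The idea is to view $W\coloneq (A\times M)/\langle(\tau,\gamma)\rangle\to B\coloneq A/\langle\tau\rangle$ as a fiber bundle with fiber $M$ and compare Leray--Serre spectral sequences. Since $\tau$ acts freely, this is the bundle associated to the $\Z/c$-covering $A\to B$ via the action of $\gamma$ on $M$. The local system $\mathcal{H}^{n-1}$ on $B$ is the one attached to $H^{n-1}(M,\Z)=\Z^r$ with monodromy factoring through $\pi_1(B)\to\Z/c$, where the generator acts by $T$. The system $\mathcal{H}^0$ is the trivial system $\Z$. Thus the $E_2$ page has only two nonzero rows: $E_2^{p,0}=H^p(B,\Z)$ and $E_2^{p,n-1}=H^p(B,\mathcal{H}^{n-1})$. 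The map $H^*(B,\Z)\to H^*(W,\Z)$ is the edge map onto the bottom row. So it is an isomorphism if and only if the row $q=n-1$ contributes nothing, and its differential $d_n\colon E_n^{p,n-1}\to E_n^{p+n,0}$ is zero. In any case $d_n$ vanishes once the top row vanishes. Hence the claim reduces to showing that $H^*(B,\mathcal{H}^{n-1})=0$ if and only if $\det(1-T)=\pm1$.

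The key step is computing $H^*(B,\mathcal{L})$ for the local system $\mathcal{L}$ with fiber $V=\Z^r$ and monodromy $\rho\colon\pi_1(B)\to\Z/c\to GL(V)$. Here $B$ is again a complex torus, say $B=\R^{2g}/\Lambda'$, where $\Lambda'\supset\Lambda$ with $\Lambda'/\Lambda\cong\Z/c$ generated by the class of $\tau$. Choose a basis $e_1,\dots,e_{2g}$ of $\Lambda'$ such that $\Lambda'\to\Z/c$ sends $e_1\mapsto 1$ and $e_j\mapsto 0$ for $j\geq2$; this is possible because a surjection $\Z^{2g}\to\Z/c$ can be normalized by $GL_{2g}(\Z)$. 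The group cohomology of $\Z^{2g}$ with coefficients in $V$ is then computed by the Koszul complex, which is the tensor product of the complexes $V\xrightarrow{T-1}V$ for $e_1$ and $V\xrightarrow{0}V$ for the other generators. Equivalently, by Künneth, $B\simeq S^1\times T^{2g-1}$ and $\mathcal{L}$ is pulled back from $S^1$ with monodromy $T$. This gives
\[
H^*(B,\mathcal{L})\cong H^*(S^1,\mathcal{L}_T)\otimes H^*(T^{2g-1},\Z),
\]
where $H^0(S^1,\mathcal{L}_T)=\ker(T-1)$ and $H^1(S^1,\mathcal{L}_T)=\operatorname{coker}(T-1)$. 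Because $H^*(T^{2g-1},\Z)$ is free and nonzero, this vanishes if and only if $T-1\colon\Z^r\to\Z^r$ is bijective, that is, if and only if $\det(T-1)=\pm1$. Up to sign this is $\chi_T(1)$.

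For the converse direction, if $\chi_T(1)\neq\pm1$, then either $\ker(T-1)\ne0$ or $\operatorname{coker}(T-1)\neq0$. In the first case the term $E_2^{0,n-1}$ is nonzero. In the second case take $E_2^{2g,n-1}\ne0$ (top degree, $p=2g$). This term cannot be hit by any differential, since the only nonzero rows are $0$ and $n-1$ and differentials go down. It also cannot support a nonzero differential, since it lies in the top column. So it survives and gives extra cohomology in degree $2g+n-1$, which exceeds $\dim_{\R}B$; the map therefore cannot be an isomorphism. In the first case, $E_2^{0,n-1}=\ker(T-1)$ maps by $d_n$ into $H^n(B)$, which is torsion-free. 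To rule out full cancellation, I would instead argue via ranks or via the top-degree term. Note that $\ker(T-1)\neq0$ implies that $\operatorname{coker}(T-1)$ has positive rank, because $T-1$ is an endomorphism of $\Z^r$ and so the kernel and cokernel have equal rank. The second case then applies.

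The main obstacle is bookkeeping rather than depth. One has to be careful that no differential can cancel the extra classes, which is handled by the top-column argument above. One also has to justify the normalization of $\Lambda'\to\Z/c$ and the Künneth reduction for local systems. A further point is that the sign $\chi_T(1)=\det(1-T)=\pm\det(T-1)$ is harmless.
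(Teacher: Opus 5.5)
Your proof is correct and follows essentially the same route as the paper: you normalize $\tau$ to be a translation along one circle factor of $A\simeq(S^1)^{2m}$, apply K\"unneth, and identify $H^0$ and $H^1$ of $S^1$ with coefficients in $L_T$ with $\ker(T-I)$ and $\operatorname{coker}(T-I)$. The only difference is that the paper splits off $(S^1)^{2m-1}$ at the level of spaces before applying Leray, so the base is $S^1$ and the spectral sequence degenerates for dimension reasons; this makes your separate argument about the differential $d_n$ and the top-column term unnecessary.
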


\begin{proof}
Topologically, an abelian variety of dimension $m$ is
$(S^1)^{2m}$. We may assume that $\tau$ acts on the first factor only.
Then
\[
(A\times M)/\langle(\tau,\gamma)\rangle\simeq
\bigl((S^1\times M)/\langle(\tau,\gamma)\rangle\bigr)
\times (S^1)^{2m-1}.
\]
By the K\"unneth formula, it remains to consider the projection
\[
q\colon (S^1\times M)/\langle(\tau,\gamma)\rangle
\longrightarrow S^1/\langle\tau\rangle.
\]
By assumption, $R^iq_*\Z$ is nonzero only for
$i=0$ and $i=n-1$. The base $S^1/\langle\tau\rangle$ is homeomorphic to $S^1$.
Thus the Leray spectral sequence degenerates at the $E_2$ page.

We have $q_*\Z\cong\Z$, and
$R^{n-1}q_*\Z$ is the local system with fiber
$H^{n-1}(M,\Z)$ and monodromy $T$; denote it by $L_T$. 
A direct computation gives that 
\[
H^0(S^1,L_T)=\ker(T-I)\qquad \text{and}\qquad 
H^1(S^1,L_T)=\operatorname{coker}(T-I).
\]
Thus both groups vanish if and only if $T-I\in\mathrm{GL}_r(\Z)$, which is equivalent to $\det(T-I)=\pm1$, and hence to $\chi_T(1)=\pm1$. 
The assertion follows now from the Leray spectral sequence and the K\"unneth formula.
\end{proof}

\begin{proof}[Proof of Theorem~\ref{thm:CK-criterion}]
Let $p_i\in B_i\subset Z$ be disjoint, $\gamma$-invariant balls around
the critical points $p_i$. Let $B$ be the union of the $B_i$, with
interior $B^\circ$ and boundary ${\partial B}$. For $t\in\Delta$ near $0$, 
$Z_t$ is the union of $B\cap Z_t$ and $Z_t\setminus B^\circ$, meeting
along ${\partial B}\cap Z_t$. Correspondingly, $Y_t$ is the union of
\begin{align} \label{eq:CK-criterion-1}
\bigl(A\times(B\cap Z_t)\bigr)/\langle(\tau,\gamma)\rangle
\quad\text{and}\quad
\bigl(A\times(Z_t\setminus B^\circ)\bigr)/\langle(\tau,\gamma)\rangle,
\end{align}
meeting along
\begin{align} \label{eq:CK-criterion-2}
\bigl(A\times({\partial B}\cap Z_t)\bigr)/\langle(\tau,\gamma)\rangle.
\end{align}
For all sufficiently small $t$, retraction to $Z_0$ gives homeomorphisms between the second spaces in \eqref{eq:CK-criterion-1}, and the same holds for the intersection spaces in \eqref{eq:CK-criterion-2}. 
Since $B_i\cap Z_0$ is contractible, each
$$
\bigl(A\times(B_i\cap Z_0)\bigr)/\langle(\tau,\gamma)\rangle
$$
retracts to $A/\langle\tau\rangle$.

Using the Mayer--Vietoris sequence, it remains to show that the natural
maps
$$
\bigl(A\times(B_i\cap Z_t)\bigr)/\langle(\tau,\gamma)\rangle
\longrightarrow A/\langle\tau\rangle
$$
induce isomorphisms on integral cohomology. 
For $t\neq0$, $B_i\cap Z_t$ is the Milnor fiber $M_i$, and the assertion follows from Lemma \ref{lem:CK-criterion} and the assumption $\chi_i(1)=\pm1$. 
Hence $\pi_Y$ in \eqref{def:pi-Y} is a $\Z$-homology fiber bundle, as we want.

It remains to show that $\pi_Y$ is not a homotopy fiber bundle.
For a contradiction, assume that $\pi_Y$ is a homotopy fiber bundle.
Then the same holds for $A\times Z\to \Delta$, because $A\times Z\to Y$ is finite \'etale.
This is a contradiction, because $Z\to \Delta$ has at least one isolated critical point in the central fiber and so $Z\to \Delta$ is not a $\Q$-homology fiber bundle; the same follows for $A\times Z\to \Delta$ by the K\"unneth formula. 
\end{proof}

\subsection{A family of bielliptic surfaces}
We start with the simplest,  
two-dimensional example.

\begin{ex}\label{ex:CK-cubic}
Set
\begin{align} \label{def:mathcal-X-CK}
S\coloneq \bigl\{zy^2+x^3+tz^3=0\bigr\}
\subset \bP^2_{x,y,z}\times\Delta,
\end{align}
where $t$ is a coordinate on the disc $\Delta$.
Let $\xi$ be a primitive sixth root of unity and consider the order-six
automorphism
$$
\xi_1\colon S\longrightarrow S,
\qquad ([x:y:z],t)\longmapsto([\xi^2x:\xi^3y:z],t).
$$
Let $F$ be an elliptic curve and let $\xi_2\in\operatorname{Aut}(F)$ be  translation by a point of order six.
Set
$$
Y\coloneq
( S\times F)/\langle(\xi_1,\xi_2)\rangle,
$$
and let $f\colon Y\to\Delta$ be the natural projection.

The Milnor fiber of $ S\to\Delta$ can be identified with the
affine elliptic curve
$$
M=\{y^2+x^3+1=0\}\subset\bA^2.
$$
Thus $H^1(M,\Z)\cong\Z^2$, and the induced action has characteristic
polynomial
$$
\chi(x)=x^2-x+1.
$$
In particular, $\chi(1)=1$, and Theorem~\ref{thm:CK-criterion} shows that $f$ is a $\Z$-homology fiber bundle.
The special fiber $Y_0$ is a cuspidal curve bundle over $F/\langle\xi_2\rangle$, whereas the other fibers are bielliptic surfaces. 
In particular, $f$ is neither smooth nor a $C^0$-fiber bundle. 
It is not a homotopy fiber bundle either: the finite \'etale cover $S\times F\to Y$ is not even a $\Q$-homology fiber bundle over $\Delta$.
\end{ex}

The general fibers of $Y\to \Delta$ constructed above are surfaces with surprisingly simple cohomology, as we show next.

\begin{lemma}\label{lem:bielliptic}
The general fiber $Y_t$ of $Y\to \Delta$ in Example~\ref{ex:CK-cubic} is a bielliptic surface. 
Its integral cohomology is torsion-free and is given by
\[
H^k(Y_t,\Z)\cong
\begin{cases}
\Z, & k=0,4,\\
\Z^2, & k=1,2,3.
\end{cases}
\]
\end{lemma}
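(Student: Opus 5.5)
The plan is to compute $H^*(Y_t,\Z)$ directly from the description $Y_t=(E_t\times F)/\langle(\xi_1,\xi_2)\rangle$, in the same spirit as the proof of Lemma~\ref{lem:CK-criterion}. Here $E_t=\{zy^2+x^3+tz^3=0\}$ for $t\neq0$ is a smooth cubic, hence an elliptic curve, with $j$-invariant $0$.

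\textbf{Bielliptic structure.} First I check that $\xi_1$ fixes the flex point $[0:1:0]$. So, with this point as origin, $\xi_1$ is a group automorphism of $E_t$ of order $6$, namely complex multiplication by a primitive sixth root of unity. Hence $E_t/\langle\xi_1\rangle\cong\bP^1$. Since $\xi_2$ is a translation of order six, the diagonal action on $E_t\times F$ is free. The quotient therefore has the two fibrations $Y_t\to E_t/\langle \xi_1\rangle\cong \bP^1$ and $Y_t\to F/\langle\xi_2\rangle$, the latter an elliptic curve. This is the standard description of a bielliptic surface, which gives the first assertion.

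\textbf{Reduction to a mapping torus.} Topologically, $F=(S^1)^2$ and $\xi_2$ may be taken to be rotation by $1/6$ on the first circle factor only, exactly as in the proof of Lemma~\ref{lem:CK-criterion}. Then
\[
Y_t\simeq \bigl((S^1\times E_t)/\langle(\tau,\xi_1)\rangle\bigr)\times S^1 = N\times S^1,
\]
where $N$ is the mapping torus of $\xi_1\colon E_t\to E_t$. I expect this to be the main point needing care: one must make sure the diagonal quotient really splits off the second circle as a product. Choosing $\xi_2$ to act on one factor handles this, as in Lemma~\ref{lem:CK-criterion}.

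\textbf{Cohomology of $N$.} I compute $H^*(N,\Z)$ with the Wang sequence (equivalently, the Leray spectral sequence over $S^1$). Let $T$ be the action of $\xi_1$ on $H^1(E_t,\Z)\cong\Z^2$. It has characteristic polynomial $x^2-x+1$, so $\det(T-I)=\chi(1)=1$ and $T-I$ is invertible over $\Z$. On $H^0$ and $H^2$ the action is trivial, since $\xi_1$ is holomorphic and hence orientation preserving. The Wang sequence therefore gives
\begin{itemize}
\item $H^0(N)=\Z$;
\item $H^1(N)=\operatorname{coker}(T_0-I)\oplus\ker(T-I)=\Z\oplus 0$;
\item $H^2(N)=\operatorname{coker}(T-I)\oplus\ker(T_2-I)=0\oplus\Z$;
\item $H^3(N)=\operatorname{coker}(T_2-I)=\Z$.
\end{itemize}
Here $T_0$ and $T_2$ denote the (trivial) actions on $H^0(E_t)$ and $H^2(E_t)$. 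The extensions split because all groups involved are free. So $N$ has the integral cohomology of $S^1\times S^2$, all torsion-free.

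\textbf{Conclusion.} Finally, the Künneth formula for $N\times S^1$, with all groups free, gives
\[
H^k(Y_t,\Z)\cong\Z,\ \Z^2,\ \Z^2,\ \Z^2,\ \Z \qquad\text{for } k=0,1,2,3,4,
\]
all torsion-free, as claimed.
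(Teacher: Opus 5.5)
Your proof is correct, but it takes a different route from the paper's. The paper argues by citation. It recognizes $Y_t$ as a bielliptic surface of Bagnera--de~Franchis type $\Z/6\Z$ (type~7 in Serrano's list), reads off the Betti numbers from the invariant part of $H^\ast(S_t\times F,\Q)$, and quotes Iitaka's result, via Serrano, for torsion-freeness.

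You instead compute $H^\ast(Y_t,\Z)$ directly. First you normalize $\xi_2$ to the rotation by $1/6$ of one circle factor. This is legitimate, since a point of exact order $6$ in $\R^2/\Z^2$ can be written as $\frac16 w$ for a primitive $w\in\Z^2$, and $w$ extends to a basis. This gives $Y_t\cong N\times S^1$ with $N$ the mapping torus of $\xi_1$. The Wang sequence together with $\det(T-I)=\chi(1)=1$ then gives $H^\ast(N,\Z)\cong H^\ast(S^1\times S^2,\Z)$, and K\"unneth finishes.

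What your route buys:
\begin{itemize}
\item It is self-contained.
\item It proves the torsion-freeness rather than citing it.
\item It shows that the lemma is governed by the same unimodularity $\chi(1)=\pm1$ that drives Lemma~\ref{lem:CK-criterion} and Theorem~\ref{thm:CK-criterion}. Running your computation for the cyclic groups $\Z/2,\Z/3,\Z/4$, where $\chi(1)=4,3,2$, recovers exactly the torsion $\operatorname{coker}(T-I)$ in $H^2$ and $H^3$ of the other cyclic bielliptic types.
\end{itemize}
The paper's route is shorter and places $Y_t$ within the classification of bielliptic surfaces, at the cost of relying on external results.
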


\begin{proof}
For $t\neq0$, the fiber $S_t$ of $S\to\Delta$ is an elliptic
curve and
\[
Y_t\simeq(S_t\times F)/\langle(\xi_1,\xi_2)\rangle
\]
is a bielliptic surface of Bagnera--de Franchis type $\Z/6\Z$ (type $7$ in the numbering of \cite[Proposition~1.2]{Serrano}). 
The assertion over $\Q$ follows by analyzing the invariant part of the cohomology of $S_t\times F$.
Iitaka showed that its integral cohomology is torsion-free, see \cite[Remark~1.6]{Serrano}.
This proves the lemma. 
\end{proof}

\subsection{Further examples} \label{subsec:further-examples}

Fix $n\geq2$ and a tuple $\vec c=(c_1,\ldots,c_n)\in\bN^n$ of pairwise coprime integers $c_i\geq2$. 
Set
$c\coloneq\prod_i c_i $,  $a_i\coloneq c/c_i$ and
$$
Z_{\vec{c}}^\circ\coloneq
\left\{
\textstyle\sum_{i=1}^n x_i^{c_i}
=t 
\right\}
\subset\bA^n\times\bA^1_t.
$$ 
There is a $\mu_c$-action on $Z_{\vec c}^\circ$, given by
$$
\gamma_{\vec{c}}\colon (x_1,\cdots,x_n,t) \longmapsto (\zeta^{a_1}x_1,\cdots,\zeta^{a_n}x_n,t),
$$
where $\zeta$ denotes a primitive c-th root of unity.

We check that the assumptions of Lemma~\ref{lem:CK-criterion}  are satisfied.
The eigenvalues of the monodromy on the Milnor fiber of a sum of powers
were determined in \cite{Pham}, see also \cite{Brieskorn}. We recall the
calculation needed here.

\begin{lemma} 
Let $T_{\vec{c}}$ denote the monodromy on the reduced middle homology of the Milnor fiber $M_{\vec{c}}$ of $\sum_i x_i^{c_i}=t$, and let $\chi_{\vec{c}}$ be its characteristic polynomial. 
Then
$$
\chi_{\vec{c}}(1)=\pm1.
$$
\end{lemma}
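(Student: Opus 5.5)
The plan is to compute $\chi_{\vec c}$ explicitly from the Brieskorn--Pham description of the Milnor fiber and its monodromy, and then evaluate at $x=1$ using cyclotomic polynomials. First, recall from \cite{Pham} (see also \cite{Brieskorn, milnor-book}) that $M_{\vec c}$ has the homotopy type of a bouquet of $\mu=\prod_i(c_i-1)$ spheres of dimension $n-1$. Its reduced middle homology is the tensor product $\bigotimes_i \widetilde H_0(\mu_{c_i})$ of the reduced homologies of the Milnor fibers $\{x_i^{c_i}=1\}$ of the one-variable pieces (join construction). The geometric monodromy is $(x_1,\dots,x_n)\mapsto(\omega_1x_1,\dots,\omega_nx_n)$ with $\omega_i=e^{2\pi i/c_i}$. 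On $\widetilde H_0(\mu_{c_i})\otimes\C$ the map $x_i\mapsto \omega_i x_i$ acts with eigenvalues $\omega_i^{k}$, $1\le k\le c_i-1$, each with multiplicity one. Hence the eigenvalues of $T_{\vec c}$ are the products $\prod_i\omega_i^{k_i}$ with $1\le k_i\le c_i-1$. I also note for later use that, with $\zeta=e^{2\pi i/c}$, we have $\zeta^{a_i}=\omega_i$, so $T_{\vec c}$ is exactly the action of $\gamma_{\vec c}$. This is what makes the lemma feed into Lemma~\ref{lem:CK-criterion} and Theorem~\ref{thm:CK-criterion}.

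Next I will use pairwise coprimality. By the Chinese remainder theorem $\mu_c\cong\prod_i\mu_{c_i}$, so the map $(k_1,\dots,k_n)\mapsto\prod_i\omega_i^{k_i}$ is injective. Its image is the set of $\eta\in\mu_c$ all of whose components in $\mu_{c_i}$ are nontrivial. Write the order of such $\eta$ as $d=\prod_i d_i$ with $d_i\mid c_i$, where $d_i$ is the order of the $i$-th component. The condition says exactly $d_i>1$ for all $i$. Conversely, every primitive $d$-th root of unity with $d$ of this form lies in the set. Therefore
\[
\chi_{\vec c}(x)=\prod_{\substack{d=d_1\cdots d_n\\ d_i\mid c_i,\ d_i>1}}\Phi_d(x),
\]
a product of cyclotomic polynomials with integer coefficients, as it must be.

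Finally I evaluate at $1$. Recall $\Phi_d(1)=p$ if $d=p^k$ is a prime power and $\Phi_d(1)=1$ if $d$ has at least two distinct prime factors. In each factor above, $n\ge2$ and the $d_i>1$ are pairwise coprime, so $d_1$ and $d_2$ contribute distinct primes to $d$. Hence every factor satisfies $\Phi_d(1)=1$, and $\chi_{\vec c}(1)=1$. The sign ambiguity in the statement only accounts for normalization conventions, such as working with cohomology instead of homology, or $\det(xI-T)$ versus $\det(I-xT)$.

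The main obstacle is the first step: pinning down the integral structure and the exact eigenvalue multiplicities of the monodromy, rather than just its eigenvalues. I would handle this by citing Pham's join description directly, which gives the multiplicity-one statement. The remaining steps are elementary number theory.
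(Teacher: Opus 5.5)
Your argument is correct and follows the paper's route. The Pham/Thom--Sebastiani description gives the eigenvalues $\prod_i\omega_i^{k_i}$, and pairwise coprimality makes them simple and prevents any of them from being a $c_i$-th root of unity. The only difference is the last step: the paper notes that $\chi_{\vec c}$ divides $\frac{\lambda^c-1}{\lambda-1}\prod_i\frac{\lambda-1}{\lambda^{c_i}-1}$, whose value at $\lambda=1$ is $c/\prod_i c_i=1$, whereas you identify $\chi_{\vec c}$ exactly as a product of $\Phi_d$ with $d$ not a prime power, which gives the sharper $\chi_{\vec c}(1)=1$ directly (for $n\ge 3$ the paper's polynomial is strictly larger than $\chi_{\vec c}$, so its argument implicitly also uses that the cofactor lies in $\Z[\lambda]$).
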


\begin{proof}
For $x^r=t$, the $\mu_r$-action $x\mapsto\zeta_r x$ equals the monodromy action, whose eigenvalues on reduced homology are
$$
\exp(2\pi i a) \qquad \text{with} \qquad a=\frac{1}{r},\ldots,\frac{r-1}{r}.
$$
By the Thom--Sebastiani theorem, the $\mu_c$-action on the homology of
$M_{\vec{c}}$ equals the monodromy action, and its eigenvalues are
$$
\exp(2\pi i a) \qquad \text{with} \qquad a=\sum_{i=1}^{n}\frac{b_i}{c_i} \qquad \text{and} \qquad b_i=1,\ldots,c_i-1.
$$ 
Since the $c_i$ are pairwise coprime, these eigenvalues are pairwise distinct and  none of these is a $c_i$-th root of unity for any $i$.
Therefore $\chi_{\vec{c}}(\lambda)$ divides
$$
\frac{\lambda^c-1}{\lambda-1}\prod_i\frac{\lambda-1}{\lambda^{c_i}-1}.
$$
The value of this expression at $\lambda=1$ is $1$, which proves the claim.
\end{proof}

It is most economical to compactify $Z_{\vec c}^\circ$ in a weighted projective space.
Let
$$
Z_{\vec{c}}\coloneq \left\{\textstyle\sum_{i=1}^n x_i^{c_i}=tw^c\right\} \subset\bP(a_1,\ldots,a_n,1)\times\bA^1_t,
$$
where $x_i$ has weight $a_i$ and $w$ has weight $1$.
A $\mu_c$-action is given by
$$
\gamma_{\vec{c}}\colon ([x_1:\cdots:x_n:w],t) \longmapsto ([\zeta^{a_1}x_1:\cdots:\zeta^{a_n}x_n:w],t).
$$
In the chart $w\neq 0$, the central fiber has the unique singularity $\sum_i x_i^{c_i}=0$, and the other fibers are smooth. 
Along $w=0$ the fibers are singular, but they admit a $\mu_c$-equivariant simultaneous resolution. 
Denote the resulting family and action by
$$ 
\pi_{\widetilde Z_{\vec{c}}}\colon \widetilde Z_{\vec{c}}\longrightarrow \bA^1_t \qquad \text{and}\qquad \gamma_{\vec{c}}\colon\mu_c\times \widetilde Z_{\vec{c}} \longrightarrow \widetilde Z_{\vec{c}},
$$
respectively. 
Thus $\pi_{\widetilde Z_{\vec{c}}}$ has a unique critical point, where
$$
\pi_{\widetilde Z_{\vec{c}}}(x_1,\ldots,x_n)=\textstyle\sum_i x_i^{c_i}, \qquad \gamma_{\vec{c}}(x_1,\ldots,x_n) =(\zeta^{a_1}x_1,\ldots,\zeta^{a_n}x_n).
$$
By \cite[Satz~1]{Brieskorn}, the link of such a singularity is a topological sphere for $n\geq 4$,
so the central fiber of $\pi_{\widetilde Z_{\vec{c}}}$ is a PL-manifold.
However, the link is usually not a standard $C^\infty$-sphere; see \cite[Satz~3]{Brieskorn} for the precise conditions.

\begin{ex}
Take $\vec{c}=(2,3,5)$. Then $c=30$, and in the weighted model
$$
Z_{\vec{c}}= \{x^2+y^3+z^5=tw^{30}\} \subset\bP(15,10,6,1)\times\bA^1_t.
$$
The chart $w\neq0$ is the standard smoothing of the $E_8$ surface
singularity $x^2+y^3+z^5=0$. 
The action is
$$
\gamma_{\vec{c}}([x:y:z:w],t) =([\zeta^{15}x:\zeta^{10}y:\zeta^6z:w],t),
$$
where $\zeta$ is a primitive thirtieth root of unity. 

Let $A$ be an elliptic curve and let $\tau\in \operatorname{Aut}(A)$ denote translation by a point of order $30$.
The special fiber of the quotient family \eqref{def:pi-Y} is singular along
$[0:0:0:1]\times A/\langle\tau\rangle$ and is locally the product of
$A/\langle\tau\rangle$ with the $E_8$ surface singularity.
For the $E_8$ singularity, the Milnor lattice is the $E_8$ lattice and
the Milnor monodromy is the corresponding Coxeter transformation
\cite{Pham,Brieskorn}. Since $T-I$ is unimodular, the resulting family
is a $\Z$-homology fiber bundle but is not smooth.

The special fiber of the resulting family in Theorem \ref{thm:CK-criterion} is a threefold with canonical singularities, locally analytically isomorphic to the product of an $E_8$ surface singularity with an elliptic curve.

\end{ex}

\begin{ex} \label{PL.man.exmp} 
Adding an even number of squares does not change the monodromy action on the Milnor fiber, cf.~\cite[Lemmas 1, 3, 4]{Brieskorn}.
Thus for any $n\geq 1$, the Milnor fiber $M$ of
$$
Z\coloneq \{x_1x_2+\cdots+ x_{2n-1}x_{2n}+y^2+z^3+t=0\}\subset \mathbb A^{2n+2}\times \mathbb A^1_t
$$
satisfies $H_{2n+1}(M,\Z)=\Z^2$, and the  $\Z/6$-action  
$$
(x_1,\dots ,x_{2n},y,z, t)\longmapsto
(\xi^3x_1\dots ,\xi^3x_{2n},\xi^3y,\xi^2z,t)
$$
has characteristic polynomial $x^2-x+1$.

The central fiber $Z_0$ has an isolated terminal singularity at the origin.
Let $B_\epsilon\subset \mathbb A^{2n+2}$ be an analytic ball of radius $\epsilon>0$ and consider the neighborhood $U_\epsilon=Z_0\cap B_\epsilon$.
For sufficiently small $\epsilon$, \cite[Theorem 2.10]{milnor-book} implies that $U_\epsilon$ is homeomorphic to the cone $C(\partial U_{\epsilon})$ over the link $\partial U_\epsilon$ of $0\in Z_0$ and the homeomorphism $U_\epsilon \cong C(\partial U_\epsilon)$ is a diffeomorphism outside the singular point of $Z_0$.
By \cite[Satz 1]{Brieskorn}, $\partial U_\epsilon$ is homeomorphic to a sphere $S^{4n+1}$.
Hence, $Z_0$ is a PL-manifold.

If $n=1$, then the given differentiable structure on $\partial U_\epsilon$ must be the standard structure on $S^5$ by \cite{Kervaire-Milnor}.
We can thus endow $U_\epsilon$ with the standard differentiable structure of the ball and get a $C^\infty$-manifold structure on $Z_0$ which agrees with the one induced by the algebraic structure of $Z_0$ outside the singularity. 
\end{ex}

\section{Homotopy fiber bundles that are not submersions} \label{sec:homotopy-examples}

In this section we analyze the topology of the blow-up of the examples from Section \ref{sec:homology-examples}.
To this end, let $Y\to \Delta$ be a projective morphism which is a $\Z$-homology fiber bundle but not a submersion (e.g.~not a homotopy fiber bundle), with $Y$ smooth.
Let $P\to \Delta$ be a smooth projective morphism  with simply connected fibers which admits a relative projective embedding $Y\hookrightarrow P$ over $\Delta$ of codimension at least $2$.
Let $X\coloneq \Bl_YP$.

\begin{theorem}\label{thm:homotopy-examples}
In the above notation, the induced map $X\to \Delta$ is a homotopy fiber bundle which is not a submersion. 
\end{theorem}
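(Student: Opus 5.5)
The plan is to verify the hypotheses of Corollary~\ref{cor:whitehead}. We show that $g\colon X\to\Delta$ is a $\Z$-homology fiber bundle and that its fibers are simply connected. Then Corollary~\ref{cor:whitehead}\ref{item:cor:whitehead:1} gives the homotopy fiber bundle property, and a separate local coordinate computation produces critical points.

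Let $r\geq 2$ be the codimension of $Y$ in $P$. Since $Y\subset P$ is a regular embedding and $Y$, $P$ are flat over $\Delta$, each $Y_t\subset P_t$ is an lci subscheme of codimension $r$, and blowing up commutes with passing to the fiber: $X_t=\Bl_{Y_t}P_t$. The exceptional divisor $E=\bP(N_{Y/P})\to Y$ is a $\bP^{r-1}$-bundle, and $E_t=\bP(N|_{Y_t})$.

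\emph{Homology fiber bundle.} I would prove the blow-up formula
\[
H^*(X_t,\Z)\cong H^*(P_t,\Z)\oplus\bigoplus_{j=1}^{r-1}H^{*-2j}(Y_t,\Z),
\]
and its analogue for $X_U=g^{-1}(U)$ over a small disc $U\ni 0$, compatibly with restriction. The argument is a Mayer--Vietoris comparison of $X_t=(X_t\setminus E_t)\cup(\text{tubular nbhd of }E_t)$ with $P_t=(P_t\setminus Y_t)\cup(\text{tubular nbhd of }Y_t)$, using three facts:
\begin{itemize}
\item $X_t\setminus E_t\cong P_t\setminus Y_t$;
\item the tubular neighborhoods retract to $E_t$ and to $Y_t$ respectively;
\item the projective bundle formula for $E_t\to Y_t$ holds, since $N|_{Y_t}$ is an honest vector bundle.
\end{itemize}
The relevant neighborhoods exist because $Y_t\subset P_t$ is a neighborhood-retract lci embedding. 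The summands $H^*(P_U)\to H^*(P_t)$ and $H^*(Y_U)\to H^*(Y_t)$ are isomorphisms: the first because $P\to\Delta$ is smooth, the second because $Y\to\Delta$ is a $\Z$-homology fiber bundle. The five lemma then gives $H^*(X_U)\cong H^*(X_t)$.

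The technical point I expect to be the main obstacle is making these decompositions genuinely natural in $t$ near the singular fiber. To handle it, I would build one tubular neighborhood of $Y$ in $P$ adapted to the projection to $\Delta$, namely a regular neighborhood that restricts to regular neighborhoods of the fibers.

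\emph{Simple connectivity.} Since $Y_t$ has real codimension $2r\geq 4$ in the simply connected $P_t$, we get $\pi_1(X_t\setminus E_t)=\pi_1(P_t\setminus Y_t)=1$. A tubular neighborhood $V$ of $E_t$ in $X_t$ is a disc bundle of $\cO(-1)$, and $V\setminus E_t$ is a circle bundle over $E_t$. Hence $\pi_1(V\setminus E_t)\to\pi_1(V)=\pi_1(E_t)$ is surjective. By van Kampen, $\pi_1(X_t)$ is generated by the images of $\pi_1(X_t\setminus E_t)=1$ and $\pi_1(V)$, and the latter factors through the former. So $\pi_1(X_t)=1$, and Corollary~\ref{cor:whitehead} applies.

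\emph{Not a submersion.} Let $y\in Y_0$ be a critical point of $Y\to\Delta$. Choose local coordinates $(u,z_1,\dots,z_r)$ on $P$ near $y$ with $Y=\{z=0\}$, and write
\[
g=h(u)+\sum_i z_i\,a_i(u,z),
\]
where $dh(y)=0$. In the blow-up chart $z_1=w$, $z_i=wv_i$ we get
\[
g=h(u)+w\Bigl(a_1+\sum_{i\geq2}v_ia_i\Bigr).
\]
At a point $(u=y,\,w=0,\,v)$ this gives
\[
dg=\Bigl(a_1(y)+\sum_{i\geq 2}v_ia_i(y)\Bigr)\,dw .
\]
This vanishes on the locus $a_1(y)+\sum_i v_ia_i(y)=0$ in $\bP^{r-1}$, which is nonempty because $r\geq 2$: it is a hyperplane, or all of $\bP^{r-1}$ if every $a_i(y)=0$. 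Such points are critical points of $g$ on $E_0$, so $g$ is not a submersion.
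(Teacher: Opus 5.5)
Your overall plan matches the paper's. The blow-up formula gives the $\Z$-homology fiber bundle property, simple connectivity of the fibers plus Corollary~\ref{cor:whitehead} upgrades this to a homotopy fiber bundle, and a normal-direction computation produces critical points on the exceptional divisor. Your coordinate computation is a correct local form of Proposition~\ref{prop:blow-up}\ref{item:prop:blow-up:1}.

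For the homology part you work fiber by fiber. The paper instead applies the derived blow-up formula \eqref{eq:derived-blow-up-formula} to the smooth pair $Y\subset P$ and pushes it forward to $\Delta$, so it never touches a singular fiber. Your route can be made to work, and the naturality you flag as the main obstacle needs no adapted tubular neighborhood. Since $\pi$ is proper and maps $X\setminus E$ homeomorphically onto $P\setminus Y$, you get $H^*(X_U,E_U)\cong H^*(P_U,Y_U)$ and $H^*(X_t,E_t)\cong H^*(P_t,Y_t)$, compatibly with restriction. The five lemma on the long exact sequences of these pairs, together with Leray--Hirsch for $E_U\to Y_U$, then gives $H^*(X_U,\Z)\cong H^*(X_t,\Z)$.

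The genuine gap is in the simple-connectivity step, and it sits exactly at the singular fibers. For $t=0$ there is no disc-bundle neighborhood of $E_0$ in $X_0$, for two reasons:
\begin{itemize}
\item $E_0=\bP(N_{Y/P}|_{Y_0})$ is singular, because $Y_0$ is;
\item $X_0$ is itself singular at points of $E_0$, namely at the critical points of $g$ that you construct in your last paragraph.
\end{itemize}
So $V\setminus E_0$ is not a circle bundle over $E_0$. Neither the connectedness of $V\setminus E_0$ (needed for van Kampen) nor the surjectivity of $\pi_1(V\setminus E_0)\to\pi_1(V)$ follows from what you wrote.

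The repair is to run your argument on the smooth total space, which is what the paper does. For a small disc $U$ around $t$, $X_U=\Bl_{Y_U}P_U$ is the blow-up of the simply connected manifold $P_U$ along a complex submanifold of codimension $r\geq 2$. Hence $X_U\setminus E_U\cong P_U\setminus Y_U$ is simply connected, and $E_U$ is a smooth divisor with an honest tubular neighborhood in the manifold $X_U$. Your van Kampen argument then gives $\pi_1(X_U)=1$. As $X_t$ is a deformation retract of $X_U$ for $U$ small, $\pi_1(X_t)=1$. Simple connectivity of $X_U$ is in any case exactly what the proof of Corollary~\ref{cor:whitehead} consumes.

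Alternatively, you can use Smale's Vietoris-type theorem, as in Step~1 of the proof of Proposition~\ref{prop:smooth-fibration-over-Sigma}. Applied to $X_t\to P_t$, whose fibers are points or copies of $\bP^{r-1}$, it gives $\pi_1(X_t)\cong\pi_1(P_t)=1$ directly.
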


Combining this with the examples in the previous section, we obtain

\begin{corollary}\label{cor:homotopy-examples}
For every  $n\geq 4$
there is a flat, projective morphism $f\colon X\to \Delta$ of smooth analytic varieties of relative dimension $n$, such that $f$ is a homotopy fiber bundle but not a submersion. 
\end{corollary}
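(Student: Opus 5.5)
The plan is to apply Theorem~\ref{thm:homotopy-examples} to the $\Z$-homology fiber bundles from Section~\ref{sec:homology-examples}. Let $\pi_Y\colon Y\to\Delta$ be one of the examples of Corollary~\ref{cor:thm:CK-criterion}, of relative dimension $m\ge 2$. For instance, take Example~\ref{ex:CK-cubic} ($m=2$), or take $A$ of dimension $m-1$ in the quotient construction \eqref{def:pi-Y} with $Z=S$. These are projective over $\Delta$, have smooth total space, and are not homotopy fiber bundles. In particular they are not submersions, since a proper submersion is a $C^\infty$-fiber bundle by Ehresmann's theorem.

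Next I choose a smooth projective $P\to\Delta$ with simply connected fibers, of relative dimension $n$, together with a closed embedding $Y\hookrightarrow P$ over $\Delta$ of codimension $n-m\ge 2$. Theorem~\ref{thm:homotopy-examples} then says that $X=\Bl_YP\to\Delta$ is a homotopy fiber bundle but not a submersion. The remaining properties are routine. $X$ is smooth because it is the blow-up of a smooth variety along a smooth center. $X\to\Delta$ is projective. It is flat because $X$ is irreducible and dominates the smooth curve $\Delta$. Its relative dimension is that of $P$, namely $n$.

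It remains to construct the embedding, and I would try $P=\bP^n\times\Delta$ first. After shrinking $\Delta$, fix a relative embedding $Y\subset\bP^M\times\Delta$ and project from a general linear center $\Lambda\subset\bP^M$ that is constant in $t$.

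\begin{itemize}
\item \emph{Injectivity.} Points of $Y$ lying in the same fiber form pairs depending on $2m+1$ parameters. The lines they span therefore sweep out a subset of $\bP^M\times\Delta$ of dimension at most $2m+2$, so its image in $\bP^M$ has dimension at most $2m+2$. A general $\Lambda$ misses it as soon as $n\ge 2m+3$.
\item \emph{Immersion.} A tangent vector of the total space $Y$ that is killed by the projection $Y\to\bP^n\times\Delta$ must be vertical. So I only need injectivity on the Zariski tangent spaces of the fibers. These are $(m+1)$-dimensional along the one-dimensional singular locus of $Y_0$ in Example~\ref{ex:CK-cubic} (note $m=2$ there). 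The same count shows that this is harmless in the range above.
\end{itemize}

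Taking $m=2$ therefore gives every $n\ge 7$, and a sharper count probably gives $n\ge 5$ or $6$.

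The main obstacle is the smallest values of $n$, in particular $n=4$ with $m=2$. There the generic projection argument fails, since surfaces do not in general project isomorphically into $\bP^4$. I would instead use the special structure of $Y=(S\times F)/\mu_6$ from Example~\ref{ex:CK-cubic}. Here $S\subset\bP^2\times\Delta$, so $Y$ sits inside the $\bP^2$-bundle $W=(\bP^2\times F)/\mu_6\times\Delta$ over $(F/\mu_6)\times\Delta$, and $W$ has fiber dimension $3$. I would embed $F/\mu_6$ as a plane cubic and try to realize $W$ as a divisor in a $\bP^2$-bundle over $\bP^2$. This requires extending the rank-$3$ bundle defining $W$ from the cubic curve to $\bP^2$. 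The resulting $P$ is then a $\bP^2$-bundle over $\bP^2$ times $\Delta$. Its fibers are simply connected of dimension $4$, and $Y\subset W\subset P$ has codimension $2$. If the extension problem fails, the fallback is a degeneracy-locus or Serre-type construction realizing $Y$ directly in a simply connected smooth family of $4$-folds.
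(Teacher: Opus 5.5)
Your reduction to Theorem~\ref{thm:homotopy-examples} and the generic-projection step are the paper's argument for large $n$. Your dimension count can be sharpened by one: a general center of dimension $M-n-1$ misses a set of dimension $d$ once $d\le n$. So $n\ge 2m+2=6$ already follows, which is Lemma~\ref{lem:relative-generic-projection} applied to the threefold $Y$.

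The genuine gap is $n=4,5$. The statement asserts these cases, and they are where the actual content lies. You only say you would \emph{try} to extend a bundle, with a vague fallback, so these cases are not proved. Moreover, the extension as you phrase it cannot exist. Put $C=F/\mu_6$. Then $W_0=\bP(\mathcal E)$ with $\mathcal E\cong\cO_C\oplus\eta^2\oplus\eta^3$ for a line bundle $\eta$ of order $6$, coming from the characters of the cover $F\to C$. Hence $\det\mathcal E\cong\eta^5$ is a nontrivial torsion bundle, whereas every vector bundle on $\bP^2$ restricts to $C$ with determinant $\cO_C(a)$. Only a twist $\mathcal E\otimes M$ with $M^3\cong\cO_C(a)\otimes\eta$ could extend, and you give no argument that one does.

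The paper avoids extension problems by choosing the elliptic curve equivariantly (Example~\ref{ex:RC-4-fold-bundle}). Theorem~\ref{thm:CK-criterion} works with any elliptic curve carrying an order-$6$ translation, so take $F=A\subset T$, the $\tau_6$-invariant cubic in the degree-$6$ del Pezzo surface $T$. Then $Y=(S\times A)/\mu_6\subset\bigl((\bP^2\times T)/\mu_6\bigr)\times\Delta$. This quotient is smooth near $Y$, since $A$ avoids all points of $T$ with nontrivial stabilizer. A resolution $R$ of $(\bP^2\times T)/\mu_6$ is then a smooth rational, hence simply connected, fourfold containing $Y$ in relative codimension $2$. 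This gives $n=4$, and $R\times\bP^1$ gives $n=5$.

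Your own route can also be repaired cheaply by replacing $\bP^2$ with $T'=\Bl_{p_1,p_2}\bP^2$, where $p_1,p_2\in C$ satisfy $\cO_C(p_1-p_2)\cong\eta$. For $D=E_1-E_2$, the split bundle $\cO\oplus\cO(2D)\oplus\cO(3D)$ restricts to $\mathcal E$ on the strict transform of $C$. So $P=\bP_{T'}\bigl(\cO\oplus\cO(2D)\oplus\cO(3D)\bigr)\times\Delta$ has simply connected fibers of dimension $4$ and contains $Y$ in codimension $2$; $n=5$ again follows by multiplying with $\bP^1$.
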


We will prove the above theorem in the remainder of this section.

\subsection{Submersions and homology fiber bundles under blow-ups}

In this section, $f\colon M\to N$ denotes a proper submersion between complex manifolds and
$$
\pi\colon M'\coloneq \Bl_Z M\longrightarrow M
$$ 
denotes the blow-up of $M$ along a closed connected complex submanifold $Z\subset M$  of codimension $c\geq 2$.
We denote the exceptional divisor of $\pi$ by
$$
E\coloneq \bP(N_{Z/M})\subset M' \qquad \text{with projection}\qquad p\colon E\to Z.
$$
The natural maps induced by $f$ are denoted by
$$
f' \colon M'\longrightarrow N\qquad \text{and} \qquad f_Z\colon Z\longrightarrow N .
$$ 
With this notation, we have the following simple but useful result.

\begin{proposition} \label{prop:blow-up}
In the above notation, the following holds:
\begin{enumerate}[(i)]
    \item $f'$ is a submersion if and only if $f_Z$ is a submersion. \label{item:prop:blow-up:1}
    \item $f'$ is a $\Z$-homology fiber bundle if and only if $f_Z$ is a $\Z$-homology fiber bundle.\label{item:prop:blow-up:2}
\end{enumerate}
\end{proposition}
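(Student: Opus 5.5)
For item (i), the plan is a local computation on $M'$. Away from $E$, $\pi$ is an isomorphism and $f'=f\circ\pi$ is a submersion there because $f$ is. So the question is only whether $df'$ is surjective at points $x'\in E$. Let $z=p(x')\in Z$. Since $f$ is a submersion, I first choose local holomorphic coordinates on $M$ near $z$ adapted to $Z$. Concretely, I write $M$ locally as $\C^m$ with coordinates $(u,v)$, where $Z=\{v=0\}$ and $v=(v_1,\dots,v_c)$.

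In such coordinates $f$ becomes some map $(u,v)\mapsto f(u,v)$. On the blow-up chart I use coordinates $(u,v_1,w_2,\dots,w_c)$ with $v_j=v_1w_j$. There
\[
f'(u,v_1,w)=f(u,v_1,v_1w_2,\dots,v_1w_c).
\]
Differentiating at a point of $E=\{v_1=0\}$, the $w$-derivatives vanish. The $u$-derivatives equal $d_uf(u,0)=d(f_Z)$. The $v_1$-derivative equals $\partial_{v}f(u,0)\cdot(1,w)$, i.e.\ $df$ applied to the normal direction represented by $x'$.

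Hence $df'_{x'}$ is surjective if and only if $d(f_Z)_z(T_zZ)$ together with $df_z(\ell)$ spans $T_{f(z)}N$, where $\ell\subset N_{Z/M,z}$ is the line given by $x'$.

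If $f_Z$ is a submersion, this is clearly surjective. For the converse, suppose $d(f_Z)_z$ is not surjective. Its image $W$ is then a proper subspace. The map $df_z$ induces a linear map $N_{Z/M,z}\to T_{f(z)}N/W$, and I must find a line $\ell$ on which it fails to cover the quotient.

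If $\dim N\ge 2$ and the quotient has dimension $\ge 2$, every line works. If the quotient has dimension $1$, the induced map has a kernel of dimension $\ge c-1\ge 1$, since $c\ge2$; choosing $\ell$ in this kernel, $\ell$ maps into $W$, so $df'$ is not surjective at the corresponding point of $E$. This is where $c\ge 2$ is used, and it is the main (mild) subtlety of (i).

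For item (ii), I will compare fibers. Over each $y\in N$, the fiber $M'_y$ is the blow-up-like space $\pi^{-1}(M_y)$, obtained from $M_y$ by replacing $Z_y$ with $E_y=p^{-1}(Z_y)$. Here $E\to Z$ is a $\bP^{c-1}$-bundle, so $E_y\to Z_y$ is a $\bP^{c-1}$-bundle (topologically, even for singular $Z_y$).

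I plan to use the blow-up formula in cohomology, which I expect to hold fiberwise:
\[
H^k(M'_y,\Z)\cong H^k(M_y,\Z)\oplus\bigoplus_{j=1}^{c-1}H^{k-2j}(Z_y,\Z).
\]
It holds compatibly with restriction from a neighborhood $f^{-1}(U)$ as well. To justify this, I will use the Mayer--Vietoris/relative sequence for $(M'_U,E_U)\to(M_U,Z_U)$, which are relative homeomorphisms. The Leray--Hirsch theorem for $E\to Z$, with classes $\xi^j$, $\xi=c_1(\cO_E(-E))$, splits the long exact sequence.

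The formula is natural for the restriction maps $H^*(M'_U)\to H^*(M'_y)$. Since $f$ is a proper submersion, Ehresmann makes the $M$-summand an isomorphism for small contractible $U$. Hence the restriction for $f'$ is an isomorphism in all degrees if and only if the restrictions $H^*(Z_U)\to H^*(Z_y)$ are isomorphisms in all degrees. This is exactly the statement that $f'$ is a homology fiber bundle if and only if $f_Z$ is.

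The key obstacle in (ii) is making the splitting natural, i.e.\ valid for the pair of neighborhood and fiber simultaneously. Here I would use the injectivity of $\pi^*$ (via $\pi_*\pi^*=\id$) and the classes $\xi^j$ restricted from $M'_U$. The converse direction, that $f'$ being a homology bundle forces $f_Z$ to be one, then follows since a direct summand of an isomorphism is an isomorphism.
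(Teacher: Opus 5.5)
Your proposal is correct and follows the paper's proof. For (i), the paper uses the same description $\operatorname{im}(d\pi_{\tilde z})=T_zZ+\tilde\ell$ and chooses $\tilde\ell\subset df_z^{-1}(H)$ for a hyperplane $H\supseteq df_z(T_zZ)$, which is your kernel argument using $c\ge 2$. For (ii), it uses the same blow-up decomposition in the sheaf form $R\pi_*\Z_{M'}\simeq\Z_M\oplus\bigoplus_{r=1}^{c-1}\iota_*\Z_Z[-2r]$, which gives $R^kf'_*\Z\simeq R^kf_*\Z\oplus\bigoplus_{i=1}^{c-1}R^{k-2i}f_{Z*}\Z$, followed by the same direct-summand argument. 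Working with sheaves makes the naturality you flag automatic (via proper base change) and avoids any appeal to $\pi_*\pi^*=\id$ on a possibly singular fiber $M'_y$. In your group-level version, obtain the fiberwise decomposition from the one over a small neighborhood $U'\ni y$ via the deformation retraction $f'^{-1}(U')\simeq M'_y$; there $\pi_*$ is available because $f'^{-1}(U')$ and $f^{-1}(U')$ are manifolds.
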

\begin{proof}
Let us first prove item \ref{item:prop:blow-up:1}.
 If $f_Z$ is a submersion, then $f_Z$ is flat and the fibers of $f_Z$ are smooth.
It follows that the fibers of $f'$ are smooth, hence $f'$ is a submersion.
Conversely, assume that $f'$ is a submersion and let $\widetilde z=(z,[\ell])\in E=\mathbb P(N_{Z/M})$.
Then
$$
\operatorname{im}(d\pi_{\widetilde z}) = T_zZ+\widetilde\ell,
$$
where $\widetilde\ell\subset T_zM$ is a lift of $\ell$.
Suppose that $f_Z$ is not a submersion at $z$. 
Choose a hyperplane $H\subset T_{f(z)}N$ containing $df_z(T_zZ)$. Since $df_z$ is surjective and $\operatorname{codim}_M Z\geq 2$, there is a line $\widetilde\ell\subset df_z^{-1}(H)$ which is not contained in $T_zZ$. 
At the corresponding point $\widetilde z\in E$, the image of  $df'_{\widetilde z}$ is contained in $H$.
Hence $f'$ is not a submersion, proving \ref{item:prop:blow-up:1}.

Next, we prove item \ref{item:prop:blow-up:2}.  
By the blow-up formula, we have 
\begin{equation}\label{eq:derived-blow-up-formula}
R\pi_{*}\Z_{M'} \simeq \Z_M\oplus \bigoplus_{r=1}^{c-1}\iota_*\Z_Z[-2r]
\end{equation}
in the derived category of sheaves of abelian groups on $M$, where $\iota\colon Z\to M$ denotes the inclusion.
This follows, for instance, from the projective-bundle formula for the exceptional divisor and the Gysin morphism, as may be checked stalkwise. 
For the analogous statement with $\ell$-adic coefficients on the \'etale site, see \cite[Exp.~VII, Thm.~2.2.1 and Thm.~8.1]{SGA5}; see also \cite[Thm.~7.31]{voisin} for the resulting decomposition of integral cohomology groups.  

Applying \(Rf_*\) to \eqref{eq:derived-blow-up-formula}, 
taking cohomology sheaves (and omitting the subscripts from the constant sheaves with stalk $\Z$), we obtain, for every $k$,
\begin{equation}\label{eq:blow-up-pushforward}
R^kf'_{*}\Z \simeq R^kf_*\Z \oplus \bigoplus_{i=1}^{c-1}R^{k-2i}f_{Z*}\Z .
\end{equation} 

Since $f$ is a proper submersion, Ehresmann's lemma implies that $R^kf_*\Z$ is a local system for every $k$.

Suppose first that $f_Z$ is a $\Z$-homology fiber bundle. 
Then each $R^q f_{Z\ast}\Z$ is a local system.
It then follows immediately from \eqref{eq:blow-up-pushforward} that each $R^k f'_\ast \Z$ is a local system. 
Hence $f'\colon M'\to N$ is a $\Z$-homology fiber bundle. 

Conversely, suppose that $f'\colon M'\to N$ is a $\Z$-homology fiber bundle. 
Then $R^kf'_{*}\Z$ is a local system for every $k$. 
Since $c\geq 2$, the summand corresponding to $i=1$ occurs in \eqref{eq:blow-up-pushforward}. 
Thus, for every $q$,
$
R^qf_{Z*}\Z
$
is a direct summand of $R^{q+2}f'_*\Z$. 
A direct summand of a local system is again a local system. 
Hence all $R^qf_{Z*}\Z$ are local systems, and therefore $f_Z$ is a $\Z$-homology fiber bundle.
This concludes the proof.
\end{proof}

\subsection{Proof of Theorem \ref{thm:homotopy-examples} and Corollary \ref{cor:homotopy-examples}}

\begin{proof}[Proof of Theorem \ref{thm:homotopy-examples}]
Let $Y\to \Delta$ be a projective morphism which is a $\Z$-homology fiber bundle but not a submersion.
Let $P\to \Delta$ be a smooth projective morphism and let $Y\hookrightarrow P$ be a relative projective embedding of codimension at least $2$ over $\Delta$.
Let $X\coloneq \Bl_YP$ be the blow-up of $P$ along $Y$.
By Proposition \ref{prop:blow-up}, the induced map $f\colon X\to \Delta$ is a $\Z$-homology fiber bundle which is not a submersion.

Since the fibers of $P\to \Delta$ are simply connected, the same holds for the fibers of $f$.
Indeed, let $y\in \Delta$ and let $U_y\subset \Delta$ be a sufficiently small disc centered at $y$.
Then $X_{U_y}=f^{-1}(U_y)$ is the blow-up of $P_{U_y}$ along a closed submanifold, hence it is simply connected because $P_{U_y}$ is. 

It then follows from Whitehead's theorem (see Corollary \ref{cor:whitehead}) that $f$ is a homotopy fiber bundle, as we want.
\end{proof}

\begin{proof}[Proof of Corollary \ref{cor:homotopy-examples}]
Example \ref{ex:CK-cubic} and Theorem \ref{thm:CK-criterion} show that there is a flat projective morphism $Y\to \Delta$ of relative dimension two which is a $\Z$-homology fiber bundle but not a submersion.
If $n\geq 6$, then we can choose a closed embedding $Y\hookrightarrow P\coloneq \bP^n\times \Delta$ (see Lemma \ref{lem:relative-generic-projection} below) and define $X\coloneq \Bl_YP$.
By Theorem \ref{thm:homotopy-examples}, $X\to \Delta$ is a homotopy fiber bundle which is not a submersion.
This proves the corollary for $n\geq 6$.
The cases $n=4,5$ follow similarly by embedding $Y$ into a bundle of rationally connected fourfolds over $\Delta$, cf.~Example \ref{ex:RC-4-fold-bundle} below.
\end{proof}

\begin{lemma}\label{lem:relative-generic-projection}
Let $B$ be a smooth curve and 
$
q\colon Y\to B
$ 
 a nonconstant morphism from a smooth projective variety of dimension $m$.
Then there is a closed embedding
$$
\iota\colon Y\hookrightarrow \mathbb P^{2m}\times B\qquad \text{such that}\quad \operatorname{pr}_2\circ\iota=q.
$$ 
\end{lemma}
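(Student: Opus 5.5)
We use the standard generic projection argument, carried out relative to $B$. Since $Y$ is projective, fix a closed embedding $j\colon Y\hookrightarrow \mathbb P^N$ for some $N\ge 2m+1$. Then $(j,q)\colon Y\hookrightarrow \mathbb P^N\times B$ is a closed embedding over $B$. We want a linear projection $\phi_L\colon \mathbb P^N\dashrightarrow \mathbb P^{2m}$ from a center $L\cong\mathbb P^{N-2m-1}$ with $L\cap j(Y)=\emptyset$ such that
$$
\iota\coloneq(\phi_L\circ j,\,q)\colon Y\to \mathbb P^{2m}\times B
$$
is a closed embedding. Then $\operatorname{pr}_2\circ\iota=q$ holds by construction, and $\iota$ is proper since $Y$ is projective. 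So the task is to choose $L$ making $\iota$ injective and immersive.

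\textbf{Injectivity.} A pair $y\neq y'$ is identified by $\iota$ exactly when $q(y)=q(y')$ and $\phi_L(y)=\phi_L(y')$. The second condition means that the secant line $\overline{j(y)j(y')}$ meets $L$. So we only need $L$ to avoid the \emph{relative secant variety}
$$
\mathrm{Sec}_B\coloneq \overline{\bigcup_{y\neq y',\,q(y)=q(y')}\overline{j(y)j(y')}}.
$$
The pairs range over $Y\times_B Y$. Since $q$ is nonconstant and $B$ is a curve, $q$ is flat with fibers of dimension $m-1$, so $\dim (Y\times_BY)=2m-1$. Adding the line gives $\dim\mathrm{Sec}_B\le 2m$.

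\textbf{Immersivity.} A tangent vector $v\in T_yY$ is killed by $d\iota$ exactly when $dq(v)=0$ and $d\phi_L(dj(v))=0$. The second condition means that the embedded tangent line of $j(Y)$ at $y$ in direction $v$ meets $L$. So we only need $L$ to avoid the union of embedded tangent lines in directions of $\ker dq$, that is, of $T_{Y/B}$ where it is defined. At a point $y$, $\ker dq_y$ has dimension $m-1$ if $dq_y\ne0$ and dimension $m$ if $dq_y=0$. The critical locus of $q$ is a proper closed subset, of dimension at most $m-1$. Hence the relative tangent variety has dimension at most $\max(m+(m-1),\,(m-1)+m)=2m-1$.

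\textbf{Choice of $L$.} Let $W=\mathrm{Sec}_B\cup\mathrm{Tan}_B\cup j(Y)$. This is a closed subvariety of $\mathbb P^N$ of dimension at most $2m$. A general linear subspace $L$ of dimension $N-2m-1$ avoids $W$, since $\dim L+\dim W<N$. The resulting $\iota$ is then injective and unramified. A proper, injective, unramified morphism of varieties is a closed embedding, which finishes the argument.

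\textbf{Main obstacle.} The expected difficulty is the dimension count for degenerate pairs. When $y\to y'$ along a fiber, the limits of secant lines are relative tangent lines, and these are covered by $\mathrm{Tan}_B$. At critical points of $q$, the fiber of $Y\times_BY\to Y$ can jump in dimension, and we must check it does not push $\dim \mathrm{Sec}_B$ above $2m$. I would handle this by stratifying $Y\times_BY$. Its fibers over $B$ have dimension exactly $2m-2$, because every fiber of $q$ has pure dimension $m-1$. So $\dim(Y\times_BY)=2m-1$ with no exceptional components, and the bound holds.
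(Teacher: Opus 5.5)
Your proposal is correct and follows the same route as the paper. You take a general linear projection whose center avoids the closure of the relative secant lines (over $Y\times_B Y$) together with the tangent lines in directions of $\ker dq$, which is exactly the set $\Sigma$ in the paper's proof. You also supply the dimension count $\dim \le 2m$ that the paper leaves to the reader. Your ``main obstacle'' worry is moot: the fiber of $Y\times_B Y\to Y$ over $y$ is $Y_{q(y)}$, which has pure dimension $m-1$ by flatness, and taking closures does not raise dimension.
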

\begin{proof}
Choose a closed embedding $j\colon Y\hookrightarrow\mathbb P^n$, with
$n$ sufficiently large.  
Let $\Sigma\subset\mathbb P^n$ be the closure of
the union of all lines $\overline{j(x)j(y)}$, where $x\neq y$ and
$q(x)=q(y)$, together with all tangent lines to $j(Y)$ corresponding to
nonzero vectors in $\ker(dq_y)$ for some $y\in Y$. 
One checks that $\dim \Sigma\leq 2m$.
Hence, if $p\colon \bP^n\to \bP^{2m}$ is a general linear projection, then $\iota\coloneq (p\circ j,q)$ has the properties we want. 
\end{proof}

\begin{remark} \label{rem:homotopy-fiber-bundles-kappa}
Theorem  \ref{thm:homotopy-examples} can in particular be applied to $P=\bP^N\times \Delta$ and $Y\to \Delta$ as in Corollary \ref{cor:thm:CK-criterion}, together with an embedding $Y\hookrightarrow P$ over $\Delta$ which exists for $N\geq 6$, cf.~Lemma \ref{lem:relative-generic-projection}. 
The resulting family has rationally connected fibers.

To get fibers of general type one can start with the blow-up $\hat H=\operatorname{Bl}_pH$ of any simply connected smooth projective general type variety $H$ of dimension at least $7$ in a point $p$ with exceptional divisor $E=\bP^N$, $N=\dim H-1\geq 6$, and use the embedding $Y\hookrightarrow E\times \Delta \hookrightarrow \hat H\times \Delta$.
\end{remark}

\section{Fibrations over Riemann surfaces} \label{sec:smooth-fibration}

In this section we prove the following result of independent interest, which will be used in the proof of Theorem \ref{thm:main.invis.thm}.

\begin{proposition}\label{prop:smooth-fibration-over-Sigma}  
Let $g\colon X\to \Sigma$ be a proper holomorphic map between connected complex manifolds with the following properties:
\begin{enumerate}[(i)]
    \item $\dim_{\C}\Sigma=1$ and $\dim_{\C}X\geq 4$;
    \item $R^i g_\ast\Z$ is a local system for all $i$;
    \item the fibers of $g$ are simply connected.
\end{enumerate} 
Then, for any analytic neighborhood $U\subset \Sigma$ of the critical values $g({\rm Crit}(g))$ of $g$, there is a proper $C^\infty$-submersion $\tilde g \colon X\to \Sigma$, which agrees with $g$ outside of $g^{-1}(U)$ and which is homotopic to $g$.
\end{proposition}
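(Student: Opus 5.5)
The plan is to work locally around each critical value and use h-cobordism theory to replace $g$ by a submersion. The critical values $g(\mathrm{Crit}(g))$ form a discrete set, since $\mathrm{Crit}(g)$ is an analytic subset and $g$ is proper. Shrinking $U$, we may therefore assume that $U$ is a disjoint union of small closed discs $D_j$, each centered at a critical value $s_j$ and containing no other critical value. Over $\Sigma\setminus\bigcup_j D_j^\circ$ we keep $g$ unchanged, so it suffices to treat one disc $D=D_j$.

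Over $D$ we need a proper submersion $X_D\coloneq g^{-1}(D)\to D$ that agrees with $g$ near $\partial X_D=g^{-1}(\partial D)$ and is homotopic to $g|_{X_D}$ rel boundary. Since $D$ is contractible, this amounts to finding a diffeomorphism $X_D\cong F\times D$ which, near the boundary, is compatible with the given bundle structure over $\partial D$. Here $F$ is a smooth fiber over a point of $\partial D$, a closed simply connected manifold of real dimension $2n-2\geq 6$, where $n=\dim_\C X\geq 4$.

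By hypotheses (ii) and (iii) together with Corollary \ref{cor:whitehead}, the inclusion $F\hookrightarrow X_D$ is a homotopy equivalence, assuming $D$ is small enough that $X_D$ retracts onto the central fiber. Now cut $D$ along a diameter, or equivalently view $D$ as $I\times I$ with $\partial D$ corresponding to $\partial(I\times I)$. Set $W\coloneq g^{-1}(\text{arc }\{0\}\times I)$, a smooth family over an arc of $\partial D$; $W$ is diffeomorphic to $F\times I$. Then $X_D$ is a smooth manifold with corners whose boundary is $g^{-1}(\partial D)$, a bundle over $S^1$ with fiber $F$ and monodromy $\phi$. Think of $X_D$ as a cobordism from $W_0=g^{-1}(\{0\}\times I)\cong F\times I$ to $W_1=g^{-1}(\{1\}\times I)\cong F\times I$, with side boundary $g^{-1}(I\times\{0,1\})\cong (F\times I)\times\{0,1\}$. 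The inclusions of $W_0$ and $W_1$ are homotopy equivalences, and everything is simply connected of dimension $2n\geq 8\geq 6$. The relative h-cobordism theorem \cite{Smale-2} therefore gives a diffeomorphism $X_D\cong W_0\times I\cong F\times I\times I$ extending the identity on $W_0$ and on the side boundary.

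Under this identification, the remaining boundary piece $W_1$ is identified with $F\times I$ in two ways: once through the h-cobordism trivialization, and once through the smooth fibration $g$ over the arc $\{1\}\times I$. These two identifications differ by a diffeomorphism of $F\times I$ that is the identity on $F\times\{0,1\}$, that is, a pseudo-isotopy of $F$. Cerf's theorem \cite{Cerf} applies because $F$ is simply connected and $\dim F\geq 6$ (in fact only $\dim F\geq 5$ is needed). It says this pseudo-isotopy is isotopic to a level-preserving one, i.e. to a path of diffeomorphisms of $F$. A level-preserving diffeomorphism can be absorbed into a fiber-preserving reparametrization of $F\times I\times I$. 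Doing so yields a diffeomorphism $X_D\cong F\times D$ that is fibered over $\partial D$ and matches $g$ there after a collar adjustment. Composing with the projection to $D$ defines $\tilde g$ on $X_D$. Since $D$ is contractible and $\tilde g=g$ on $\partial X_D$, the straight-line homotopy in $D$ between $g$ and $\tilde g$ gives a homotopy relative to the boundary. Gluing these over all $D_j$ gives the required global $\tilde g$, which is homotopic to $g$.

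The main obstacle is the last matching step. The h-cobordism theorem alone only produces a product structure on $X_D$ relative to part of its boundary. The fibration that $g$ induces on the remaining boundary piece is a priori a nontrivial pseudo-isotopy, and Cerf's theorem, with its requirement of simply connected $F$ of dimension at least $5$, is exactly what removes this discrepancy. Care with corners and collars is needed throughout; this is routine but fiddly.
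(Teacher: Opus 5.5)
Your proposal is correct and is essentially the paper's proof: reduce to a square around a single critical value, trivialize $X_D$ relative to three sides with the relative h-cobordism theorem, repair the fourth side with Cerf's pseudo-isotopy theorem by absorbing the isotopy along the cobordism direction, then finish with a collar adjustment and the straight-line homotopy. One correction: the comparison diffeomorphism of $F\times I$ on $W_1$ is in general the identity only on $F\times\{0\}$, since at the far corner the two trivializations differ by a representative of the boundary monodromy $\phi$ you mention. So you should apply Cerf in the form that the group of diffeomorphisms of $F\times I$ fixing $F\times\{0\}$ pointwise is connected, exactly as the paper does, and this suffices for the rest of your argument.
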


Before we prove the proposition, we formulate the following version of Smale's h-cobordism theorem for manifolds with boundary, that we will use. 

\begin{theorem}[Relative h-cobordism Theorem]  \label{thm:h-cobordism}
Let $I_x,I_y$ denote intervals $[0,1]$ with coordinates $x$ and $y$. 
Let $C$ be a compact simply connected smooth manifold with corners of dimension at least six and with a proper surjective map $g\colon C\to I_x\times I_y$.
Assume that the boundary of $C$ has four faces $M_0$, $M_1$, $V_0$, $V_1$ corresponding to $y=0,$ $y=1$, $x=0$, $x=1$, respectively.
Suppose that $g$ is a submersion over $y=0,$ $x=0$, and $x=1$, with smooth fiber $F$ and that there are compatible bundle trivializations
\begin{align} \label{eq:boundary-trivialization}
F\times I_x \stackrel{\sim}{\longrightarrow} M_0
\qquad \text{and}\qquad 
F\times I_y \stackrel{\sim}{\longrightarrow} V_j
\quad \text{for}\quad j=0,1.
\end{align}
If the inclusions $M_i\hookrightarrow C$ are homotopy equivalences for $i=0,1$, then the boundary identifications \eqref{eq:boundary-trivialization} extend to a diffeomorphism
\[
\Psi\colon F\times I_x \times I_y\stackrel{\sim}{\longrightarrow} C.
\]
\end{theorem}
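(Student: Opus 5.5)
I would reduce the statement to the classical relative h-cobordism theorem (Smale's theorem for h-cobordisms relative to a boundary), applied to $C$ viewed as a cobordism from $M_0$ to $M_1$. Set $W\coloneq C$ and regard the ``vertical'' part of the boundary, $V_0\cup V_1$, as the fixed side boundary. The faces $M_0=g^{-1}(I_x\times\{0\})$ and $M_1=g^{-1}(I_x\times\{1\})$ are compact manifolds with boundary $\partial M_j = g^{-1}(\{0,1\}\times\{j\})$. The hypotheses give $\partial W = M_0\cup (V_0\cup V_1)\cup M_1$, and the trivializations \eqref{eq:boundary-trivialization} identify $V_0\cup V_1$ with $(\partial M_0)\times I_y = F\times\{0,1\}\times I_y$. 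This is exactly the product structure on the side boundary required in the relative h-cobordism theorem. After rounding corners along $\partial M_j$ (standard, since all corners are of product type coming from $g$ being a submersion near the edges $x=0,1$ and $y=0$), $W$ is a relative cobordism between $M_0$ and $M_1$ rel boundary.

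Next, I would check the hypotheses. $W$ is simply connected by assumption. The boundary pieces $M_0\cong F\times I_x$ and $\partial M_0 \cong F\times\{0,1\}$ need to be simply connected for the classical statement. Here $M_0\simeq W$ by hypothesis, so $\pi_1(F)=\pi_1(M_0)=\pi_1(W)=1$, and each component of $\partial M_0$ is a copy of $F$, hence simply connected too. The inclusions $M_0,M_1\hookrightarrow W$ are homotopy equivalences by assumption. Then $H_*(W,M_0)=0$, and Whitehead torsion vanishes since $\pi_1=1$. The dimension condition $\dim W\geq 6$ is given. So the relative h-cobordism theorem yields a diffeomorphism $W\cong M_0\times I_y$ that restricts to the identity on $M_0$ and to the given product structure on the side boundary $\partial M_0\times I_y$.

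Composing with $F\times I_x\cong M_0$ gives $\Psi\colon F\times I_x\times I_y\to C$ extending the trivialization on $M_0$ and on $V_0,V_1$. The main point needing care is that the relative version of the theorem really allows prescribing the diffeomorphism on the side boundary, and not just producing some product structure there. Concretely, one needs a collar and gradient-like vector field that agrees with $\partial_y$ (transported via \eqref{eq:boundary-trivialization}) near $V_0\cup V_1$ and near $M_0$, and the handle cancellation must happen in the interior. This is the standard form of Smale's theorem (e.g.\ Milnor's h-cobordism lectures, relative version). So I would cite it, adapting the proof: choose a Morse function equal to $y$ near $V_0\cup V_1\cup M_0$, and run handle cancellation away from the boundary. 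Note that $\Psi$ need not be compatible with $g$ on $M_1$; the statement makes no such claim. I expect this bookkeeping, namely corner smoothing and the relative Morse function, to be the only real obstacle.
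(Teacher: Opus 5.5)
Your proposal is correct and matches the paper's argument. The paper likewise just invokes the relative $s$-cobordism theorem for manifolds with boundary (Kirby--Siebenmann), or alternatively attaches $\partial M_0\times I_y\times\R_{\geq 0}$ along the vertical faces and applies Smale's theorem with a prescribed product structure at infinity. One quibble: no corner rounding is needed, and rounding along $\partial M_j$ would merge $M_j$ with the side faces, since the corners along $\partial M_0$ and $\partial M_1$ are exactly those of a cobordism rel boundary.
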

\begin{proof}
The result follows from the more general version of the s-cobordism theorem for manifolds with boundary, see \cite[Essay~I, \S 1, p.~4]{KirbySiebenmann}.

Alternatively, glue $\partial M_0\times I_y\times \R_{\geq 0}$ to the vertical boundary of $C$.
The prescribed product structure in \eqref{eq:boundary-trivialization} gives the embedding at infinity required in
\cite[Theorem~3.1]{Smale-2}; that theorem yields a product diffeomorphism
extending this embedding, whose restriction to $C$ gives the required map $\Psi$.
\end{proof}

\begin{proof}[Proof of Proposition \ref{prop:smooth-fibration-over-Sigma}]

The critical values of $g$ form a closed discrete subset of $\Sigma$.
It thus suffices to treat the case when $g$ has only one critical value and $U\subset \Sigma$ is a small disc containing the critical value.
Let $I_x=[0,1]$ and $I_y=[0,1]$ be intervals with coordinates $x$ and $y$, respectively.
Choose a closed embedding
$$
Q\coloneq I_x\times I_y\hookrightarrow U\subset \Sigma
$$
of a square which contains the critical value of $g$ in its interior. 
In particular, $g$ is a submersion over a neighborhood of the boundary $\partial Q$. 

Put
\begin{align} \label{def:C}
C=g^{-1}(Q)\qquad \text{and}\qquad M_i=g^{-1}(I_x\times\{i\})\quad \text{for $i=0,1$},
\end{align}
and
\begin{align} \label{def:V}
V=g^{-1}\bigl((\{0\}\times I_y)\cup(\{1\}\times I_y)\bigr).
\end{align}
Since $g$ is a submersion over a neighborhood of $\partial Q$, Ehresmann's lemma allows us to fix compatible bundle trivializations 
\begin{align} \label{eq:trivialize-boundary}
M_0\simeq F\times I_x \qquad\text{and}\qquad V\simeq (F\times I_y)\sqcup(F\times I_y),
\end{align}
where $F$ is a fixed smooth fiber of $g$.

Note that $C$ is a manifold with corners.
It is a cobordism between the manifolds with boundary $M_0$ and $M_1$.
The idea of the proof is to apply the h-cobordism theorem (see Theorem \ref{thm:h-cobordism}). 
This trivializes $C$ but the trivialization $\Psi$ might not be compatible with the natural projection to $I_x$ at the boundary component $y=1$.
To fix this, we will apply Cerf's theorem \cite{Cerf}.
Once $\Psi$ is compatible with the projection to $I_x\times I_y$ over (a neighborhood of) the boundary of $I_x\times I_y$, we can define $\widetilde g$ via the diffeomorphism $\Psi$ and the projection $F\times I_x\times I_y\to I_x\times I_y$, which is clearly a submersion. 

\medskip

\textbf{Step 1.} The inclusion $M_i\hookrightarrow C$ is a homotopy equivalence for $i=0,1$.
Moreover, $C$ is simply connected.

\begin{proof}
The argument is similar to the proof of Corollary \ref{cor:whitehead}.
Indeed, since $g$ is a $\Z$-homology fiber bundle and $I_x$ and $Q$ are contractible, $M_i\hookrightarrow C$ induces an isomorphism on homology, hence is a homotopy equivalence by Whitehead's theorem \cite[Corollary 4.33]{H}.
Here we used that $M_i$ and $C$ are simply connected, because the fibers of $g$ are simply connected, see \cite[Main Theorem]{Smale}.
\end{proof}

\textbf{Step 2.} There is a diffeomorphism
$$
\Psi\colon (F\times I_x)\times I_y\longrightarrow C
$$
which agrees with the chosen trivialization \eqref{eq:trivialize-boundary} on
$$
\bigl((F\times I_x)\times\{0\}\bigr) \ \cup\ \bigl((F\times\{0,1\})\times I_y\bigr).
$$

\begin{proof}
By Step 1, $C$ is a h-cobordism between the manifolds with boundary $M_0$ and $M_1$.
Since $\dim_{\R}C=2\cdot \dim_{\C}X\geq 8$, this h-cobordism is trivial by Smale's h-cobordism theorem with boundary, see Theorem \ref{thm:h-cobordism}.
This concludes Step 2. 
\end{proof}

Since $g$ is a submersion over the top edge $I_x\times \{1\}$ as well, we can fix by Ehresmann's lemma a bundle trivialization 
$$
\tau \colon F\times I_x\longrightarrow M_1 \qquad \text{with}\qquad  g(\tau (z,x))=(x,1),
$$
which agrees with $\Psi$ from Step 2 at $x=0$. 
We then consider the composition
$$
\Phi \coloneq \tau ^{-1}\circ \Psi|_{(F\times I_x)\times\{1\}}\colon F\times I_x\longrightarrow F\times I_x ,
$$
which is the identity at $x=0$.
\medskip

\textbf{Step 3.} There is a smooth path of pseudo-isotopies
$$
\Phi_t\colon F\times I_x \longrightarrow F\times I_x
$$
with $\Phi_0=\Phi$ and $\Phi_1=\id$.

\begin{proof}
Cerf \cite[p.~7, D\'efinition 2]{Cerf} considers the group
$$
P(F)\coloneq \{\varphi \in \operatorname{Diff}(F\times I_x)\mid \varphi|_{F\times \{0\}}=\id\} 
$$
of pseudo-isotopies, equipped with the $C^\infty$-topology. 
Note that $F$ is a simply connected compact manifold without boundary of real dimension at least $6$.
Hence, \cite[p.~8, Théorème 0]{Cerf} implies that $P(F)$ is connected and there is a smooth path with the required properties.
This concludes Step 3.
\end{proof}

\textbf{Step 4.}
There is a diffeomorphism
$$
\Psi'\colon (F\times I_x)\times I_y\longrightarrow C
$$
which agrees with the chosen trivializations~\eqref{eq:trivialize-boundary} on
$$
\bigl((F\times I_x)\times\{0\}\bigr)
\cup
\bigl((F\times\{0\})\times I_y\bigr),
$$
is fiber-preserving on the right vertical face, in the sense that
$$
g\bigl(\Psi'(z,1,y)\bigr)=(1,y)
\qquad
\text{for all }(z,y)\in F\times I_y,
$$
and satisfies
$
\Psi' |_{(F\times I_x)\times\{1\}}=\tau .
$
\begin{proof}
Define
$$
\Xi\colon (F\times I_x)\times I_y
   \longrightarrow (F\times I_x)\times I_y,
\qquad
\bigl((z,x),y\bigr)\longmapsto \bigl(\Phi_{1-y}^{-1}(z,x),y\bigr),
$$
and use it to modify $\Psi$ from Step 2 as follows:
$$
\Psi'=\Psi\circ\Xi.
$$
Since $\Phi_1=\id$, the map $\Psi'$ agrees with $\Psi$ on the
bottom face $I_x\times \{0\}$. 
Since every $\Phi_t$ fixes $F\times\{0\}$ pointwise, it also agrees with $\Psi$ on the left vertical face. 
Moreover, every $\Phi_t$ preserves $F\times\{1\}$, and hence $\Psi'$ is
fiber-preserving on the right vertical face $\{1\}\times I_y$. 
Finally, using $\Phi_0=\Phi$, we obtain
$$
 \Psi' |_{(F\times I_x)\times\{1\}}
 =
 \Psi |_{(F\times I_x)\times\{1\}}\circ\Phi^{-1}
 =\tau .
$$
This proves Step 4.
\end{proof}
 
By Step 4, we have
\[
g\bigl(\Psi'(z,x,y)\bigr)=(x,y)
\]
for all $(x,y)\in \partial Q$ and $z\in F$.
After modifying $\Psi'$ by a diffeomorphism supported in a collar of
the boundary and equal to the identity on the boundary, we may assume that the same holds in some neighborhood of $\partial Q$.
We may then define
$$
\tilde g_Q\colon C\longrightarrow Q \qquad \text{by}\qquad  \tilde g_Q\bigl(\Psi'(z,x,y)\bigr)=(x,y).
$$
This is a smooth submersion and agrees with $g$ on a neighborhood of
$\partial C$. 

Since $Q=[0,1]^2$ is convex, 
$$
H\colon C\times [0,1]\longrightarrow Q,\qquad (c,t)\mapsto H_t(c)\coloneq \bigl((1-t)\cdot g(c)
+t\cdot \tilde g_Q(c)\bigr)
$$
defines a homotopy from $g|_C$ to $\tilde g_Q$. 
On the neighborhood of $\partial C$ where $g=\tilde g_Q$, this homotopy is constant. 
Thus $g|_C$ and $\tilde g_Q$ are homotopic relative to that neighborhood.
The map $g$, restricted to a small neighborhood of $X\setminus C$, thus glues with $\tilde g_Q$ to produce a $C^\infty$-submersion $\tilde g\colon X\to \Sigma$ which is homotopic to $g$ and agrees with $g$ in a neighborhood of $X\setminus g^{-1}(Q)\supset X\setminus g^{-1}(U)$.
Since $g^{-1}(Q)$ and $Q$ are compact, $\tilde g$ is proper.
This concludes the proof.
\end{proof}

\section{Proof of Theorem \ref{thm:main.invis.thm}} \label{sec:main.invis.thm}

\subsection{Promoting homology fiber bundles to homotopy fiber bundles}
Let $h\colon V\to \bP^1$ be a morphism of smooth projective varieties which is a $\Z$-homology fiber bundle but not a submersion.
Let $P\to \bP^1$ be a smooth projective morphism with simply connected fibers and with an embedding $V\hookrightarrow P$ which lifts $h$.
Assume $\codim_P V\geq 2$, and put 
\begin{align} \label{def:Y-general}
Y\coloneq \Bl_VP\qquad \text{with natural map}\qquad g\colon Y\longrightarrow \bP^1 .
\end{align}

\begin{proposition} \label{prop:X-basic-1}
In the above notation, the following holds.
\begin{enumerate}[(i)]
\item All fibers of $g\colon Y\to \bP^1$ are simply connected.\label{item:X-basic-1:0}
\item For all $i$, $R^ig_\ast \Z$ is a trivial local system on $\bP^1$.
\label{item:X-basic-1:2.5}
\item $g$ is a homotopy fiber bundle.
\label{item:X-basic-1:3}
\item $g$ is not a submersion.\label{item:X-basic-1:4}
\end{enumerate}
\end{proposition}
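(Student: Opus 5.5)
The plan is to reduce everything to Proposition \ref{prop:blow-up} and Corollary \ref{cor:whitehead}, in the same way as the local Theorem \ref{thm:homotopy-examples}. One extra ingredient is needed: over the simply connected base $\bP^1$, every local system is trivial.

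\textbf{Item (i).} I would argue exactly as in the proof of Theorem \ref{thm:homotopy-examples}.
\begin{itemize}
\item For $y\in\bP^1$, take a small disc $U_y$ around $y$. Then $Y_{U_y}=g^{-1}(U_y)$ is the blow-up of $P_{U_y}$ along the closed submanifold $V_{U_y}$, which has codimension $\geq2$.
\item Blowing up a submanifold of complex codimension $\geq 2$ does not change $\pi_1$. The exceptional divisor has connected fibers $\bP^{c-1}$, and removing a real-codimension-$\geq4$ subset does not change $\pi_1$ either.
\item $P_{U_y}$ is simply connected, since $P\to\bP^1$ is a smooth proper submersion with simply connected fibers over a disc.
\item The fiber $Y_y$ is a deformation retract of $Y_{U_y}$ for $U_y$ small, so $Y_y$ is simply connected.
\end{itemize}

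\textbf{Items (ii) and (iii).} I apply Proposition \ref{prop:blow-up} with $M=P$, $N=\bP^1$, $Z=V$ and $f_Z=h$; this is legitimate because $P\to\bP^1$ is a proper submersion and $\codim_P V\geq2$. Since $h$ is a $\Z$-homology fiber bundle, Proposition \ref{prop:blow-up}\ref{item:prop:blow-up:2} shows that $g$ is one too, i.e.\ every $R^ig_*\Z$ is a local system. Because $\pi_1(\bP^1)=1$, every local system on $\bP^1$ is trivial, which gives (ii). Then (iii) follows from (i), (ii) and Corollary \ref{cor:whitehead}\ref{item:cor:whitehead:1}.

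\textbf{Item (iv).} I apply Proposition \ref{prop:blow-up}\ref{item:prop:blow-up:1}: $g$ is a submersion iff $h$ is, and $h$ is not a submersion by assumption.

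\textbf{Connectedness of $V$.} Proposition \ref{prop:blow-up} assumes $Z$ is connected. If $V$ is disconnected, I would blow up one component at a time; the blow-up formula \eqref{eq:derived-blow-up-formula} simply becomes a direct sum over the components. For (ii), each component of $V$ is then a $\Z$-homology fiber bundle, since its $R^q$ is a direct summand of $R^qh_*\Z$. For (iv), some component fails to be a submersion, so the converse direction of \ref{item:prop:blow-up:1} applies to that component.

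The main obstacle I expect is only this bookkeeping about connectedness in the blow-up formula. Otherwise each step is a direct citation of an earlier result.
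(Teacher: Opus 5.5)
Your proposal is correct and follows essentially the same route as the paper. You prove (i) by deformation-retracting the fiber into $g^{-1}(U)$ and using that $P_U$, and hence its blow-up along $V_U$, is simply connected; you get (ii)--(iv) from Proposition \ref{prop:blow-up}, triviality of local systems on $\bP^1$, and Corollary \ref{cor:whitehead}. Your extra bookkeeping for a disconnected $V$ is harmless but unnecessary, since $V$ is a variety and, in the concrete case, $V=(S\times A)/\langle(\xi_6,\tau_6)\rangle$ is visibly connected.
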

\begin{proof}
Fix $y\in \bP^1$ and choose a sufficiently small neighborhood $U\subset \bP^1$ of $y$ such that $g^{-1}(y)\hookrightarrow g^{-1}(U)$ and $P_y\hookrightarrow P_U=P\times_{\bP^1}U$ are deformation retracts.
Since $P_y$ is simply connected, the same holds for $P_U$ and hence for the blow-up $g^{-1}(U)$. 
Thus, $g^{-1}(y)$ is also simply connected. 
This proves item \ref{item:X-basic-1:0}.

By item \ref{item:prop:blow-up:2} in Proposition \ref{prop:blow-up}, $g$ is a  $\Z$-homology fiber bundle.
Since $\bP^1$ is simply connected, each $R^ig_\ast \Z$ is trivial, which proves item \ref{item:X-basic-1:2.5}.

Since $g$ is a $\Z$-homology fiber bundle with simply connected fibers,  Corollary~\ref{cor:whitehead} 
implies that $g$ is a homotopy fiber bundle.
This proves \ref{item:X-basic-1:3}.

Finally, $g$ is not a submersion by Proposition \ref{prop:blow-up}.
This concludes the proof.
\end{proof}

\subsection{Explicit examples and proof of Theorem \ref{thm:main.invis.thm}}

Let $L=\cO_{\bP^1}(1)$, and let $s$ be a general section of $L^{\otimes 6}$. 
Define the surface 
\begin{align*} %\label{def:S}
S\coloneqq \{zy^2+x^3+sz^3=0\}\subset \bP(\cO_{\bP^1}\oplus L^{\otimes 2}\oplus L^{\otimes 3}),
\end{align*}
where the coordinates $x,y,z$ correspond to the summands $L^{\otimes 2}$, $L^{\otimes 3}$, and $\mathcal O_{\bP^1}$, respectively.
This is a rational elliptic surface with six singular fibers of Kodaira type~$\mathrm{II}$; rational elliptic surfaces with this configuration, including their various projective models, are studied in detail in \cite{CGMVZ}.

Around each zero of $s$, the projection $\pi\colon S\to \bP^1$ is analytically locally isomorphic to the local model from \eqref{def:mathcal-X-CK}. 
We have an order-six fiberwise action on $\bP(\cO_{\bP^1}\oplus L^{\otimes 2}\oplus L^{\otimes 3})$: 
$$
\xi_6\colon  [x: y: z]\mapsto [\xi^2 x: \xi^3 y: z],
$$
where $\xi$ denotes a primitive sixth root of unity.
Let $A$ be an abelian variety and  $\tau_6\in \operatorname{Aut}(A)$ be given by translation by a point of order six.
We define
$$
V\coloneq (S\times A)/ \langle (\xi_6,\tau_6) \rangle ,
$$
which admits a finite \'etale cover $S\times A\to V$ of degree $6$.
Consider the natural morphism
$$
h\colon V\longrightarrow \bP^1
$$
that is induced by $S\to \bP^1$.
By Theorem \ref{thm:CK-criterion} and Example \ref{ex:CK-cubic}, this map is a $\bZ$-homology fiber bundle, but not a submersion.

From now on, we specialize to the case where $A$ is an elliptic curve.
Our aim is to lift $h\colon V\to \bP^1$ to an embedding $V\hookrightarrow P$, for some smooth projective morphism $P\to \bP^1$ with simply connected fibers.
By Lemma \ref{lem:relative-generic-projection} we can  choose $P\coloneq \bP^N\times \bP^1$ for any $N\geq 6$, but with more work we can get such a $P\to \bP^1$ of relative dimension four, as we show next. 

\begin{ex} \label{ex:RC-4-fold-bundle}
Let $T$ be the degree 6 del~Pezzo surface obtained by blowing up the three coordinate vertices of $\bP^2$.
The standard Cremona transformation $[x_0:x_1:x_2]\mapsto [x_0^{-1}:x_1^{-1}:x_2^{-1}]$ lifts to an order 2 automorphism  $\tau_2$ of $T$, and  $[x_0:x_1:x_2]\mapsto [x_1:x_2:x_0]$ lifts to an order 3 automorphism $\tau_3$ of $T$. 
Since $\tau_2$ and $\tau_3$ commute, $\tau_6\coloneq \tau_2\tau_3^{-1}$ is an order 6 automorphism of $T$.

The action is free away from the fixed points of $\tau_2$ and of $\tau_3$. 
These are $[\pm 1: \pm 1: \pm1 ]$ and $[1:\epsilon:\epsilon^2]$ where $\epsilon$ is a third root of unity.

Let $A\subset T$ be the  curve  $\{\sum_{i\neq j}x_ix_j^2=0\}$.
Then $A$ is a $\tau_6$-invariant, smooth, elliptic curve which does not contain any of the $\tau_2$ or $\tau_3$ fixed points. 
Thus $\tau_6$ restricts to a translation  of order six on $A$.

Consider
$$
V\coloneq (S\times A)/ \langle (\xi_6,\tau_6) \rangle
\subset
\bigl(\bP(\cO_{\bP^1}\oplus L^{\otimes 2}\oplus L^{\otimes 3})\times T\bigr)/ \langle (\xi_6,\tau_6) \rangle \eqqcolon P'.
$$
Here $P'\to\bP^1$ is a fiber bundle with fiber
$$
R'\coloneq  \bigl(\bP^2\times T\bigr)/ \langle (\xi_6,\tau_6) \rangle.
$$
Let $R\to R'$ be a functorial resolution of singularities.
It extends to a fiber bundle $P\to\bP^1$ with fiber
$R$, such that the projection $P\to P'$ is an isomorphism over the smooth locus of $P'$.
Since $R$ is unirational, $P\to \bP^1$ has simply connected fibers.

In fact, $R$ is rational because $R\to \bP^2/\langle \xi_6\rangle$ admits a rational section, given by $[1:1:1]$ and so its generic fiber is a del Pezzo surface of degree 6 with a rational point, hence is rational, see \cite[Theorem 9.4.8(iii)]{Poonen}.
Moreover, $\bP^2/\langle \xi_6\rangle$ is rational and so is $R$.
The same argument shows that the generic fiber of $P\to \bP^1$ is rational.
In particular, $P\to \bP^1$ is birational to the projection $\bP^4\times \bP^1\to \bP^1$.

Note that $P'$ is singular exactly at the images of the fixed points of the square or cube of  $(\xi_6,\tau_6)$, hence $P'$ is smooth along the image of $V$. 
Thus the embedding  $V\hookrightarrow P'$ lifts to an embedding $V\hookrightarrow  P$.  
The resulting $g\colon Y\to \bP^1$ from \eqref{def:Y-general} has then $\dim Y=5$.
\end{ex}

With the above example at hand, we are finally able to prove Theorem \ref{thm:main.invis.thm} stated in the introduction.

\begin{proof}[Proof of Theorem \ref{thm:main.invis.thm}]
We define
\begin{align} \label{def:Y}
Y\coloneq \Bl_VP\qquad \text{with natural map}\qquad g\colon Y\longrightarrow \bP^1 ,
\end{align}
where $V$ and $P$ are as in Example \ref{ex:RC-4-fold-bundle}; in particular, $\dim Y=5$.
Items \ref{item:main.invis.thm:1} and \ref{item:main.invis.thm:2} 
of Theorem \ref{thm:main.invis.thm} follow directly from 
Proposition~\ref{prop:X-basic-1}; item \ref{item:main.invis.thm:4} is established in Proposition~\ref{prop:smooth-fibration-over-Sigma}. 

It remains to show that the fibers of $g$ are PL-manifolds, but $g$ is not a $C^0$-fiber bundle. 
In local analytic coordinates, the singular fibers of $g$ are obtained by blowing up
$$
\{x^2+y^3=u=0\}\subset \bA^4_{xyuv}.
$$ 
After blow-up the chart  
\begin{align} \label{eq:blow-up-sing}
\{x^2+y^3-u'w=0\}\subset \bA^5_{xyu'vw}
\end{align}
given by $u=u'w$ contains the singularities.

Let $Y_0$ be a singular fiber of $g$.
Since $Y_0$ is algebraic, it admits a subanalytic triangulation, unique up to PL homeomorphism by \cite[Corollary~4.3]{Shiota-Yokoi}.
As we noted in Example~\ref{PL.man.exmp}, the link of every point in \eqref{eq:blow-up-sing} is a standard PL sphere by \cite[Satz 1]{Brieskorn}.
This shows that $Y_0$ is locally around each of its singular points a PL manifold; uniqueness up to PL homeomorphisms then shows that these local structures glue to a global PL-manifold structure on $Y_0$.  

Note finally that $g$ has six singular fibers, so is not a submersion.
To see that it is not a $C^0$-fiber bundle, note that the Milnor fiber $M$ of \eqref{eq:blow-up-sing} satisfies $H_3(M,\Z)\cong \mathbb Z^2$ (see Example~\ref{PL.man.exmp}) and so there are local vanishing cycles. 
This implies that $g$ is not a $C^0$-fiber bundle, thus completing the proof of the theorem.
\end{proof}

\begin{remark} \label{rem:terminal}
The local computation leading to \eqref{eq:blow-up-sing} also shows that, locally at each critical point, $g\colon Y\to \bP^1$ can be written as $ (x_1,\dots ,x_5)\mapsto x_1^2+x_2^2+x_3^2+x_4^3$ in suitable analytic local charts on $Y$.
The singularities of the fibers are locally given by $\{x_1^2+x_2^2+x_3^2+x_4^3=0 \}\subset \mathbb A^5$.
These are terminal, because they are given by the product of a three-dimensional compound $A_1$-singularity with $\mathbb A^1$, see \cite[Corollary 5.38]{kollar-mori}.
\end{remark}

\begin{remark}
One can show that the singular fibers of $g\colon Y\to \bP^1$ produced in the proof of Theorem \ref{thm:main.invis.thm} admit the structure of a $C^\infty$-manifold that agrees with the one induced by the algebraic structure outside small neighborhoods of the singular locus.

The surface $S$ used in the proof of Theorem \ref{thm:main.invis.thm} (cf.~Example \ref{ex:RC-4-fold-bundle}) can be replaced by any of the other examples from Section \ref{subsec:further-examples}.
In those examples, the singularities of $Z_{\vec{c}}^\circ$ are completely determined by \cite{Brieskorn}, but even if the singularities admit $C^\infty$-structures, it is not clear that there is a natural one. 
\end{remark}

\section{Kotschick's conjecture and a question of Koll\'ar--Pardon} \label{sec:applications}

Let $X$ be a compact Kähler manifold. 
In 2013, Kotschick asked whether the following two conditions are equivalent, cf.~\cite[Question 15]{K}:
\begin{kotschickconditions}[series=kotschick]
\item $X$ admits a holomorphic one-form without zeros;\label{cond:A}
\item $X$ admits a real closed one-form without zeros.\label{cond:B}
\end{kotschickconditions}
Kotschick observed that \condref{A} implies \condref{B} and asked if the converse holds.
By Tischler's theorem \cite{T}, Property \condref{B} is equivalent to  
\begin{enumerate}
\item[(B')] $X$ is a smooth fiber bundle over $S^1$. 
\end{enumerate}

In the literature, the equivalence of \condref{A} and \condref{B} became known as Kotschick's conjecture.
It has been verified in dimensions $2$ and $3$ in \cite{K,S} and \cite{HS,pietig}, respectively.
For closely related work on holomorphic versions of Tischler's theorem, sparked by the above conjecture, see e.g.~\cite{DHL,SY,hao,C1,C2,HWZ,C3,pietig-kodaira,DHLW}. 

As a consequence of Theorem \ref{thm:main.invis.thm}, we prove the following.

\begin{theorem} \label{thm:main2} 
There is a smooth complex projective variety $X$ of dimension $5$ and with $b_1(X)=2$, such that the Albanese morphism $f\colon X\to {\rm Alb}(X)$ has the following properties:
\begin{enumerate}[(i)]
\item $f$ is a homotopy fiber bundle;
\item there is a $C^\infty$-fiber bundle structure $\tilde f\colon X\to {\rm Alb}(X)$ which is homotopic to $f$;
\item $f$ is not a submersion and any holomorphic one-form on $X$ has a zero. 
\end{enumerate}
\end{theorem}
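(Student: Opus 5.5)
The plan is to take $X\coloneq Y\times_{\bP^1}E$ as in Corollary \ref{cor:consequences}. Here $g\colon Y\to\bP^1$ is the fivefold from the proof of Theorem \ref{thm:main.invis.thm}, and $E\to\bP^1$ is a double cover by an elliptic curve whose four branch points avoid the six critical values of $g$. Then $X$ is smooth of dimension $5$: near points over branch points $g$ is a submersion, so the base change is smooth there, and elsewhere $E\to\bP^1$ is étale. Let $f\colon X\to E$ be the projection. Since $R^ig_*\Z$ is a trivial local system, the same holds for $R^if_*\Z$. The fibers are simply connected by Proposition \ref{prop:X-basic-1}, so $f$ is a homotopy fiber bundle by Corollary \ref{cor:whitehead}. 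Moreover $f$ is not a submersion, because its fibers over the preimages of the critical values are the singular fibers of $g$.

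Next I would show that $f$ is the Albanese morphism and that $b_1(X)=2$. The fibers of $f$ are connected and simply connected, so the homotopy exact sequence (valid for a homotopy fiber bundle over a $K(\pi,1)$, or via the Leray spectral sequence with $f_*\Z=\Z$ and $R^1f_*\Z=0$) gives $\pi_1(X)\cong\pi_1(E)$ and $H^1(X,\Z)\cong H^1(E,\Z)$. Hence $b_1=2$, $h^{1,0}(X)=1$, and every holomorphic one-form on $X$ is pulled back from $E$. So $\Alb(X)=E$ and the Albanese map is $f$, possibly after composing with a translation.

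For (ii), I apply Proposition \ref{prop:smooth-fibration-over-Sigma} with $\Sigma=E$. The dimension is $5\ge 4$, the sheaves $R^if_*\Z$ are local systems, and the fibers are simply connected. This gives a proper $C^\infty$-submersion $\tilde f$ homotopic to $f$, and Ehresmann's lemma makes it a fiber bundle.

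For (iii), every holomorphic one-form is $f^*\alpha$ with $\alpha$ a nonzero constant form on $E$. At a critical point $x$ of $f$, $df_x$ is not surjective; since the base is one-dimensional, $df_x=0$, so $f^*\alpha$ vanishes at $x$. The main obstacle I expect is checking carefully that $X$ is smooth and that the fibers of $f$ are literally fibers of $g$. This reduces to the étaleness of $E\to\bP^1$ over the critical values, which holds by construction.
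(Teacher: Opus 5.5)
Your proposal is correct and follows essentially the same route as the paper. Both take the base change $X=Y\times_{\bP^1}E$ along a cover by an elliptic curve that is étale over the critical values of $g$, and then use Corollary~\ref{cor:whitehead} for (i), Proposition~\ref{prop:smooth-fibration-over-Sigma} for (ii), and for (iii) the fact that every holomorphic one-form is pulled back from $E$ and therefore vanishes at the critical points of $f$.

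The one minor difference is how you get $b_1(X)=2$ and $\Alb(X)\cong E$. You obtain $H^1(X,\Z)\cong H^1(E,\Z)$ from the Leray spectral sequence, whereas the paper deduces $\pi_1(X)\cong\pi_1(E)$ directly from the simple connectivity of the fibers. Either argument suffices here.
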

\begin{proof}
Let $E$ be an elliptic curve and $E\to \bP^1$  a surjective morphism whose critical values are disjoint from the critical values of $g\colon Y\to \bP^1$ from \eqref{def:Y}.
We then define  
\begin{align} \label{def:X-2}
X\coloneq Y\times_{\bP^1}E\qquad \text{with natural map}\qquad  f\colon X\longrightarrow E.
\end{align} 
Note that $X$ is smooth because $g$ is smooth in a neighborhood of the critical values of $E\to \bP^1$.
Moreover, $f$ is a homotopy fiber bundle with simply connected fibers since $g$ is.  
Similarly, $f$ is not a submersion because $g$ is not. 

Since $f$ has simply connected fibers, 
$f$ induces an isomorphism on fundamental groups.
Hence $\Alb(X)\cong E$ and $f$ is the Albanese morphism.
Since $f$ is not a submersion and $\Alb(X)$ is an elliptic curve, each holomorphic one-form on $X$ has a zero. 

By Proposition~\ref{prop:smooth-fibration-over-Sigma}, there is a $C^\infty$-fiber bundle structure $\tilde f\colon X\to E$ which is homotopic to $f$.
This concludes the proof of the theorem. 
\end{proof}

Theorem \ref{thm:main2} yields the following strong counterexample to Kotschick's conjecture.

\begin{corollary}\label{cor:Kotschick}
Condition \condref{B} does not imply condition \condref{A}.
In fact, there is a projective manifold $X$ of dimension $5$ with $b_1(X)=2$ such that each nonzero cohomology class $u\in H^1(X,\R)$ can be represented by a closed real one-form without zeros, while the harmonic representative of $u$ always has zeros.
\end{corollary}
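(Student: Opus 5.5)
We take $X$ and $f\colon X\to E$ to be exactly as in the proof of Theorem \ref{thm:main2}. Thus $X$ is a projective manifold of dimension $5$ and $f$ is its Albanese morphism onto the elliptic curve $E$. Since $f$ induces an isomorphism on fundamental groups, $f^\ast\colon H^1(E,\R)\to H^1(X,\R)$ is an isomorphism and $b_1(X)=2$. The plan has two independent halves: realizing every nonzero class by a zero-free closed form, and showing every harmonic representative has zeros.

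\textbf{Existence of zero-free representatives.} By Theorem \ref{thm:main2}, there is a $C^\infty$-submersion $\tilde f\colon X\to E$ homotopic to $f$. Homotopic maps induce the same map on cohomology, so $\tilde f^\ast=f^\ast$ on $H^1$. Given a nonzero $u\in H^1(X,\R)$, write $u=f^\ast v$ with $0\neq v\in H^1(E,\R)$. Represent $v$ by a constant (translation-invariant) real $1$-form $\alpha$ on $E=\C/\Lambda$, which is nowhere vanishing because $v\neq 0$. Then $\tilde f^\ast\alpha$ is closed and represents $\tilde f^\ast v=u$. At each $x\in X$ it equals $\alpha_{\tilde f(x)}\circ d\tilde f_x$. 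This is nonzero because $d\tilde f_x$ is surjective and $\alpha_{\tilde f(x)}\neq 0$.

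\textbf{Harmonic representatives have zeros.} Fix a Kähler metric on $X$. By Hodge theory, the harmonic representative of $u$ is $\operatorname{Re}\omega$ for a unique holomorphic $1$-form $\omega\in H^0(X,\Omega^1_X)$, with $\omega\neq0$ since $u\neq0$. On a complex manifold the real part of a $(1,0)$-form vanishes at a point if and only if the form does, because $\omega=\operatorname{Re}\omega - i(\operatorname{Re}\omega)\circ J$. Since $H^0(X,\Omega^1_X)=f^\ast H^0(E,\Omega^1_E)$ is one-dimensional, $\omega=c\,f^\ast dz$ with $c\neq0$. It therefore vanishes exactly at the critical points of $f$, and these exist because $f$ is not a submersion (Theorem \ref{thm:main2}(iii)). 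Hence $\operatorname{Re}\omega$ has zeros.

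The only points needing care are that homotopy preserves the pullback on $H^1$ and that harmonic $1$-forms on a compact Kähler manifold are real parts of holomorphic forms. Both are standard, so the main content is already in Theorem \ref{thm:main2} and Proposition \ref{prop:smooth-fibration-over-Sigma}. Note that the zero set of the harmonic representative does not depend on the chosen Kähler metric, since it is always the critical locus of $f$.
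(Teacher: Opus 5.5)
Your proposal is correct and follows the paper's proof. Zero-free representatives come from pulling back an invariant nonvanishing form on $E$ along the homotopic submersion $\tilde f$ of Theorem~\ref{thm:main2}. The harmonic representative is $\omega+\overline{\omega}$ for a nonzero holomorphic one-form $\omega=c\,f^\ast dz$, which vanishes at the critical points of the Albanese map $f$. Your identity $\omega=\operatorname{Re}\omega-i(\operatorname{Re}\omega)\circ J$ and the identification $H^0(X,\Omega^1_X)=f^\ast H^0(E,\Omega^1_E)$ only spell out steps the paper states more briefly.
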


\begin{proof}
Let $X$ be as in \eqref{def:X-2}.
Then $b_1(X)=2$, $f\colon X\to E$ is the Albanese morphism of $X$ and every holomorphic one-form on $X$ has a zero. 
 
By Theorem \ref{thm:main2}, there is a $C^\infty$-fibration ${\tilde f}\colon X\to \Alb(X)$ which is homotopic to the Albanese morphism $f$.
Hence, ${\tilde f}^\ast\colon H^1(\Alb(X),\R)\to H^1(X,\R)$ is an isomorphism.
Since ${\tilde f}$ is a submersion, it follows that every nonzero class in $H^1(X,\R)$ can be represented by a closed real one-form without zeros. 

Now let $\alpha_u\in A^1(X,\R)$ be the harmonic representative of a nonzero class $u\in H^1(X,\R)$ and set $\omega_u=(\alpha_u)^{1,0}$. 
Since $X$ is K\"ahler and $u\neq 0$, $\omega_u$ is a nontrivial holomorphic one-form. 
As we have seen above, $\omega_u$ vanishes at some point $x\in X$. 
Since $\alpha_u=\omega_u+\overline{\omega_u}$, it follows that $\alpha_u(x)=0$.
Hence, the harmonic representative $\alpha_u$ of $u$ has a zero, as we want.
This concludes the proof of Corollary \ref{cor:Kotschick}.
\end{proof}

By a theorem of Popa and Schnell \cite{PS}, every holomorphic one-form on a smooth complex projective variety of general type has a zero. 
The variety in Theorem \ref{thm:main2} has Kodaira dimension $-\infty$, since the general fiber of its Albanese map is rationally connected. 
This raises the following natural question.

\begin{question}
Can a smooth complex projective variety of general type admit a real closed one-form without zeros? Equivalently, can its underlying differentiable manifold fiber smoothly over $S^1$?
\end{question}

Let us also mention the following related question.

\begin{question}\label{question:Kotschick-nef}
Are conditions \condref{A} and \condref{B} from Kotschick's conjecture equivalent for smooth  complex projective varieties with nef canonical class?
\end{question} 

As another consequence of Theorem~\ref{thm:main.invis.thm}, we obtain a negative answer to a question of Koll\'ar and Pardon
\cite[Question 27]{KP} (see also \cite[Question 1.3]{LMW}).

\begin{corollary}\label{cor:KP}  
Let $f\colon X\to E$  be as in \eqref{def:X-2}.
Then $f_*\colon \pi_1(X)\to \pi_1(E)$ is an isomorphism, 
 the universal cover $\widetilde X$ of $X$ has the homotopy type of a finite CW complex, but $f$ is not a
$C^\infty$-fiber bundle.
\end{corollary}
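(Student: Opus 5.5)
The plan has three parts: the isomorphism on $\pi_1$, the finiteness of the universal cover, and the failure of smoothness. Each reduces to results already established for $g\colon Y\to\bP^1$ and $f\colon X\to E$.

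\textbf{Fundamental groups.} By construction, $f\colon X=Y\times_{\bP^1}E\to E$ is a proper surjective map whose fibers are fibers of $g$. These fibers are simply connected by Proposition~\ref{prop:X-basic-1}\ref{item:X-basic-1:0}. Over a small disc $U\subset E$, the preimage $f^{-1}(U)$ deformation retracts to a fiber and is therefore simply connected. A van Kampen argument over a cover of $E$ by such discs then shows that $f_*\colon\pi_1(X)\to\pi_1(E)$ is an isomorphism. Surjectivity is clear because the fibers are connected. Alternatively, one can use the homotopy fiber bundle structure from Theorem~\ref{thm:main2}: the long exact sequence of a quasifibration, or of the homotopic $C^\infty$-bundle $\tilde f$, gives the isomorphism directly, since the fiber $F$ satisfies $\pi_1(F)=1$.

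\textbf{Universal cover.} Since $f_*$ is an isomorphism, $\widetilde X=X\times_E\bC$, where $\bC\to E$ is the universal cover. The induced map $\tilde f\colon\widetilde X\to\bC$ is proper holomorphic. It is a $\Z$-homology fiber bundle, because $R^i\tilde f_*\Z$ is the pullback of the local system $R^if_*\Z$, and its fibers are simply connected. Because $\bC$ is contractible, Corollary~\ref{cor:whitehead}\ref{item:cor:whitehead:2} applies and shows that $\widetilde X$ has the homotopy type of a finite CW complex, namely that of a fiber.

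\textbf{Failure of smoothness.} It remains to see that $f$ is not a $C^\infty$-fiber bundle. I read this as: $f$ is not a submersion, equivalently not a locally trivial smooth bundle for the given map. This follows from Theorem~\ref{thm:main2}(iii). Since $E\to\bP^1$ is étale near the critical values of $g$, $f$ has critical points, and near them it is locally isomorphic to $g$.

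If a stronger statement is intended, namely that $f$ is not even a $C^0$-fiber bundle, I would transfer the argument from the proof of Theorem~\ref{thm:main.invis.thm}. The local Milnor fiber of \eqref{eq:blow-up-sing} has $H_3\cong\Z^2$, so there are nontrivial local vanishing cycles, and these obstruct local topological triviality near the critical point. This last step is the main obstacle: it relies on the local-to-global vanishing cycle argument, which the earlier proof only sketches. For the corollary as stated, however, non-submersivity suffices.
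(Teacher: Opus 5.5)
Your proposal is correct and follows essentially the paper's route: the $\pi_1$-isomorphism comes from simple connectivity of the fibers, the finiteness of $\widetilde X$ from the $\Z$-homology fiber bundle $\widetilde X\to\bC$ with simply connected fibers together with Corollary~\ref{cor:whitehead}, and the failure of smoothness from non-submersivity (Proposition~\ref{prop:X-basic-1}, Theorem~\ref{thm:main2}). Your speculative $C^0$ paragraph is unnecessary, since any $C^\infty$-fiber bundle structure on the given map $f$ would already force $f$ to be a submersion.
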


\begin{proof} 
The universal cover of $E$ is  $\widetilde E=\C$. 
For a regular value $y\in\widetilde E$, the inclusion $\widetilde X_y\hookrightarrow\widetilde X$ is an integral homology equivalence by the Leray spectral sequence, since $\widetilde X\to\widetilde E$ is a $\Z$-homology fiber bundle and $\widetilde E$ is contractible. Both spaces are simply connected:
this follows for $\widetilde X$ from the isomorphism $\pi_1(X)\simeq\pi_1(E)$ and for $\widetilde X_y$ from Proposition~\ref{prop:X-basic-1}. 
Corollary~\ref{cor:whitehead}
therefore shows that $\widetilde X_y\hookrightarrow\widetilde X$ is a homotopy equivalence.
As $\widetilde X_y$ is compact, $\widetilde X$ has the homotopy type of a finite CW complex.
Finally, $f$ is not a $C^\infty$-fiber bundle, since it is not
a submersion by Proposition \ref{prop:X-basic-1}. 
\end{proof}

\section{A topological equisingularity theorem} \label{sec:KP-topological}
While \cite[Question~27]{KP} has a negative answer in general (see Corollary \ref{cor:KP} above), we prove here the following positive answer to one of its topological versions.

\begin{theorem} \label{thm:KP.rightway.thm}
Let $\Sigma$ be a (possibly non-compact) Riemann surface with $\pi_1(\Sigma)\neq 0$. 
Let $g\colon X\to \Sigma$ be a proper morphism between complex spaces.
Assume that $g$ has simply connected fibers. 
Then the following are equivalent.
   \begin{enumerate}[(i)]
    \item $g$ is a $\Z$-homology fiber bundle. \label{item:KP.rightway.thm.1}
    \item $g$ is a homotopy fiber bundle.\label{item:KP.rightway.thm.2}
    \item  The universal cover $\widetilde X$ is homotopy equivalent to a finite CW complex. \label{item:KP.rightway.thm.3}
    \end{enumerate}
    If, in addition,   $\dim X\geq 4$ and $X$ is smooth, then these are also equivalent to
    \begin{enumerate}[(i)] \setcounter{enumi}{3}
    \item \label{item:KP.rightway.thm.4} There exists a proper $C^\infty$-fiber bundle $g'\colon X\to\Sigma$ which is homotopic to $g$ and agrees with $g$ outside a small neighborhood of the singular fibers of $g$.
        \end{enumerate}
\end{theorem}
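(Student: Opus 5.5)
The plan is to prove the cycle (i)$\Rightarrow$(ii)$\Rightarrow$(iii)$\Rightarrow$(i). In the smooth case of dimension at least $4$, I would add (i)$\Rightarrow$(iv)$\Rightarrow$(iii).

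\emph{The easy implications.}
\begin{enumerate}[(a)]
\item (i)$\Rightarrow$(ii) is Corollary~\ref{cor:whitehead}\ref{item:cor:whitehead:1}.
\item (ii)$\Rightarrow$(i) holds because a homotopy equivalence induces an isomorphism on homology.
\item (i)$\Rightarrow$(iv) is exactly Proposition~\ref{prop:smooth-fibration-over-Sigma}. Its hypotheses (ii) and (iii) are our (i) and the simple connectivity of the fibers, and $\dim X\geq 4$ is assumed.
\end{enumerate}

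\emph{Fundamental groups and (ii)$\Rightarrow$(iii).} First I would show that $g_*\colon\pi_1(X)\to\pi_1(\Sigma)$ is an isomorphism, using only that the fibers are simply connected and that $g$ is proper. Each point of $\Sigma$ has a disc neighborhood $U$ such that $g^{-1}(U)$ deformation retracts onto a fiber, so $g^{-1}(U)$ is simply connected. Hence the pullback $\widetilde X\coloneq X\times_\Sigma\widetilde\Sigma$ of the universal cover is connected and simply connected, and it is therefore the universal cover of $X$.

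Since $\pi_1(\Sigma)\neq 0$, $\Sigma$ is not $\bP^1$, so $\widetilde\Sigma\cong\C$ or the disc, which is contractible. Moreover $\pi_1(\Sigma)$ is torsion free and hence infinite.

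Under (ii), the base change $\tilde g\colon\widetilde X\to\widetilde\Sigma$ is again a $\Z$-homology fiber bundle with simply connected fibers over a contractible base. Corollary~\ref{cor:whitehead}\ref{item:cor:whitehead:2} then shows that $\widetilde X$ is homotopy equivalent to a compact fiber, which is a finite CW complex.

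\emph{(iv)$\Rightarrow$(iii).} The map $g'$ is homotopic to $g$, so $g'_*=g_*$ on $\pi_1$, and the pullback of $\widetilde\Sigma$ along $g'$ is again the universal cover of $X$. This pullback is a smooth fiber bundle over a contractible base with compact fiber $F$, hence homotopy equivalent to $F$, which is a finite CW complex.

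\emph{The main obstacle: (iii)$\Rightarrow$(i).} Suppose $g$ is not a $\Z$-homology fiber bundle. The set $S\subset\Sigma$ of points where some $R^qg_*\Z$ fails to be a local system is discrete and nonempty. Its preimage $\widetilde S\subset\widetilde\Sigma$ is a union of deck orbits, each of which is infinite because $\pi_1(\Sigma)$ is infinite.

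For $s\in\widetilde S$, choose small disjoint closed discs $D_s$ with a base point $t_s\in\partial D_s$. The local defect at $s$ is the relative group $H^*(\widetilde X_{D_s},\widetilde X_{t_s};\Z)$. It is nonzero, since the retraction $\widetilde X_{D_s}\simeq\widetilde X_s$ and the simple connectivity reduce "local system near $s$" to the vanishing of this group. The defect is the same for all points in one deck orbit.

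I would then show that the direct sum of these defects over an infinite orbit injects into, or surjects onto, a subquotient of $H^*(\widetilde X;\Z)$. The tool is Mayer--Vietoris or excision for the pair $\bigl(\widetilde X,\ \widetilde X_{\widetilde\Sigma\setminus\bigcup_s D_s^\circ}\bigr)$. Over the complement of the discs, $\tilde g$ is a genuine fiber bundle. That complement deformation retracts onto a graph to which the discs are attached, and the fiber is simply connected.

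The hardest part is controlling the connecting maps so that the infinitely many defects are not cancelled. To handle this I would pass to a field: take a prime $p$ or $\Q$ for which some defect is nonzero, which is possible by the universal coefficient theorem. I would then use the exhaustion of $\widetilde\Sigma$ by discs containing $N$ points of $\widetilde S$ and show that the dimension of some $H^k(\widetilde X;\mathbb F)$ grows without bound in $N$, via the long exact sequences of the pairs. This contradicts the finite generation of the homology of a finite CW complex.
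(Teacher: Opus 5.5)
Your implications (i)$\Rightarrow$(ii)$\Rightarrow$(iii) and (i)$\Rightarrow$(iv)$\Rightarrow$(iii) agree with the paper's, as does the identification $\widetilde X=X\times_\Sigma\widetilde\Sigma$. For the $\pi_1$-isomorphism the paper quotes Smale's Vietoris mapping theorem, which is exactly what your ``hence'' glosses over.

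The genuine gap is in (iii)$\Rightarrow$(i), the only nontrivial implication. You correctly reduce it to showing that $\tilde g\colon\widetilde X\to\widetilde\Sigma$, which has infinitely many bad points, cannot have finitely generated cohomology. But you then only announce that the local defects survive in a subquotient of $H^*(\widetilde X)$, and you flag the cancellation problem yourself as unresolved. That is the entire content of the implication. Your suggested remedy does not close it:
\begin{itemize}
\item Growth of $\dim H^k(\widetilde X_{B_N};\mathbb F)$ along an exhaustion gives no lower bound on $\dim H^k(\widetilde X;\mathbb F)$ unless you show that the new classes extend to all of $\widetilde X$; the restriction maps need not have large image.
\item In the excision sequence for $(\widetilde X,\widetilde X_W)$, the term $H^*(\widetilde X_W)$ is itself infinitely generated, since $H^1(\widetilde X_W)\cong H^1(W)$ is. So a priori the connecting maps can absorb every defect.
\item Your space-level defect $H^k(\widetilde X_{D_s},\widetilde X_{t_s})$ is an extension of the kernel of cospecialization in degree $k$ by its cokernel in degree $k-1$. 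These two phenomena have to be detected in different global degrees.
\end{itemize}

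The idea you are missing is to work with the constructible sheaves $\mathcal F=R^i\tilde g_*\Z$ on $\widetilde\Sigma$, using that $\widetilde\Sigma$ is non-compact, hence Stein. Artin vanishing gives $H^j(\widetilde\Sigma,\mathcal F)=0$ for $j\geq 2$. So the Leray spectral sequence degenerates, and $H^*(\widetilde X,\Z)$ is finitely generated if and only if every $H^0$ and $H^1$ of every $R^i\tilde g_*\Z$ is (Proposition~\ref{prop:D-finite}).

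Lemma~\ref{lem:D-finite} then treats the two kinds of defect separately. If the maximal skyscraper subsheaf (the kernels of cospecialization) has infinitely many nonzero stalks, then $H^0$ is already infinitely generated. After dividing it out, each bad point $0_i$ gives a non-surjective restriction $H^0(U_i,\mathcal F)\to H^0(U_i\cap V,\mathcal F)$. Now take the Mayer--Vietoris sequence for $\widetilde\Sigma=V\cup\bigsqcup_i U_i$, where $V$ is connected and contains no bad points, so $H^0(V,\mathcal F)$ is finitely generated. It embeds the cokernel of $\prod_i H^0(U_i,\mathcal F)\oplus H^0(V,\mathcal F)\to\prod_i H^0(U_i\cap V,\mathcal F)$ into $H^1(\widetilde\Sigma,\mathcal F)$, and this cokernel is not finitely generated. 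Handling all the $U_i$ at once through a single product, rather than through an exhaustion, is what rules out cancellation.
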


We will need the following proposition.

\begin{proposition} \label{prop:D-finite}
Let $X$ be a complex space and let $f\colon X\to \Sigma$ be a proper morphism to a Riemann surface $\Sigma$ such that $b_1(\Sigma)<\infty$.
Then the groups $H^i(X,\Z)$ are finitely generated if and only if there is a finite subset $D\subseteq \Sigma$ such that $f$ is a $\mathbb{Z}$-homology fiber bundle over $\Sigma\setminus D$.
\end{proposition}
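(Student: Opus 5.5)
We prove the two implications separately, using the Leray spectral sequence for $f$ together with the structure of constructible sheaves on a Riemann surface. Since $f$ is proper and $X$ is a complex space, each $R^if_*\Z$ is a constructible sheaf on $\Sigma$ with finitely generated stalks. Hence there is a closed discrete subset $D_0\subset\Sigma$ such that every $R^if_*\Z$ is a local system on $\Sigma\setminus D_0$. Only finitely many $i$ are involved, since the fibers have bounded dimension.

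\emph{Sufficiency.} Suppose $D$ is finite and $f$ is a $\Z$-homology fiber bundle over $\Sigma\setminus D$. Then $\Sigma\setminus D$ is a Riemann surface with $b_1<\infty$. Removing finitely many points only adds finitely many generators, so $\Sigma\setminus D$ deformation retracts onto a finite graph, or is compact. For a local system $L$ with finitely generated stalks on such a space, $H^j(\Sigma\setminus D,L)$ is finitely generated; this follows by computing with a finite CW structure. The sheaves $R^if_*\Z$ differ from the extension of their restrictions only at the finitely many points of $D$, where the stalks are finitely generated. Excision, or the long exact sequence for $j_!j^*\to \mathrm{id}\to i_*i^*$, then shows that each $H^j(\Sigma,R^if_*\Z)$ is finitely generated. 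The Leray spectral sequence has only finitely many nonzero terms, so $H^k(X,\Z)$ is finitely generated.

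\emph{Necessity.} Suppose the $H^k(X,\Z)$ are finitely generated. We must show that $D_0$ can be taken finite. If $\Sigma$ is compact, $D_0$ is automatically finite, so the issue is the noncompact case, where $D_0$ could be infinite and accumulate toward the ends. The plan is to find cohomology classes on $X$ detecting each point of $D_0$.

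At a point $p\in D_0$, some $R^if_*\Z$ fails to be locally constant. On a small disc $U$ around $p$ this means the specialization map $(R^if_*\Z)_p=H^i(X_p)\to H^i(X_t)$ is not an isomorphism, which yields a nonzero class in relative or local cohomology. Concretely, take the topmost $i$ of failure, or pass to the cone of $\Z_\Sigma$-modules. The canonical object $\phi$ of vanishing cycles gives a skyscraper contribution at $p$, and hence a nonzero summand in $H^*_c$ or $H^*$ of the complex $Rf_*\Z$ supported at $p$. For a noncompact $\Sigma$ one expects $H^1(\Sigma,\mathcal F)$ and $H^0$ to absorb these only up to a finite amount. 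More precisely, consider the perverse-type decomposition. Let $\mathcal G=R^if_*\Z$, with $j\colon\Sigma\setminus D_0\hookrightarrow\Sigma$. Use the exact sequences
\[
0\to K\to\mathcal G\to j_*j^*\mathcal G\to Q\to 0,
\]
where $K$ and $Q$ are skyscraper sheaves supported on $D_0$. If infinitely many points of $D_0$ contribute nontrivially, the group $H^0(\Sigma,K)=\bigoplus_{p}K_p$ or $H^0(\Sigma,Q)$ is not finitely generated. Even when $K=Q=0$, a jump in rank of the local system across $p$ is impossible, since $\mathcal G$ would then be a local system near $p$. So the remaining nontrivial case is a nonzero $K$ or $Q$ at each $p\in D_0$.

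Next we push this infinite generation through to $H^*(X)$. The groups $H^*(\Sigma\setminus D_0,j^*\mathcal G)$ have controlled size: they are finitely generated when $D_0$ is finite, and countably generated in general. Taking $H^*(\Sigma,-)$ of the sequence above, an infinite direct sum $\bigoplus_{p\in D_0}Q_p$ maps to $H^1(\Sigma,\text{image})$, and comparing ranks forces some $H^j(\Sigma,\mathcal G)$ to be infinitely generated. Finally we transfer this through the Leray spectral sequence to $H^{i+j}(X,\Z)$. Since $\Sigma$ has cohomological dimension at most $2$, and at most $1$ for noncompact $\Sigma$, the spectral sequence has only two columns in the noncompact case and degenerates. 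So $H^j(\Sigma,R^if_*\Z)$ injects into, or is a quotient of, $H^{i+j}(X,\Z)$. In particular it is a subquotient of $H^{i+j}(X,\Z)$, and hence finitely generated.

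\textbf{Expected obstacle.} The main difficulty is the rank-comparison step in the necessity direction: showing that infinitely many nontrivial skyscraper pieces $K_p$ or $Q_p$ actually force infinite generation of $H^0$ or $H^1$ of $\mathcal G$, rather than cancelling against $H^1(\Sigma\setminus D_0,j^*\mathcal G)$. I would try first to work locally: cover $D_0$ by disjoint discs and use Mayer--Vietoris to show that each point contributes independently. This should use the hypothesis $b_1(\Sigma)<\infty$ to bound the contribution from the global topology of $\Sigma$.
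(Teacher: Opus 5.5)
Your sufficiency direction is correct and is essentially the paper's argument. You use $j_!j^*\mathcal G\to\mathcal G\to i_*i^*\mathcal G$ on $\Sigma$, where the paper uses Mayer--Vietoris on $X$. The degeneration of the Leray spectral sequence for noncompact $\Sigma$ also matches the paper, with one caveat: ``cohomological dimension $\le 1$'' holds only for constructible sheaves, which is why the paper invokes Artin vanishing on the Stein curve $\Sigma$.

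The gap is in the necessity direction. You reduce to the case $K_p\neq 0$ or $Q_p\neq 0$ at every $p\in D_0$, on the grounds that $K_p=Q_p=0$ forces $\mathcal G$ to be locally constant at $p$. That is false. $\mathcal G\cong j_*j^*\mathcal G$ near $p$ is compatible with nontrivial local monodromy, because $(j_*L)_p$ is the group of monodromy invariants, and $j_*L$ is locally constant at $p$ only if the monodromy is trivial. This is the typical case, not an exotic one. Take the cuspidal family $zy^2+x^3+tz^3=0$ of Example~\ref{ex:CK-cubic}:
\begin{itemize}
\item $\mathcal G=R^1\pi_*\Z$ has stalk $H^1$ of a cuspidal cubic at $0$, which is $0$;
\item $j^*\mathcal G$ has order-six monodromy with $\det(T-I)=\chi(1)=1$, hence no invariants;
\item so $K_0=Q_0=0$, and all other $R^i\pi_*\Z$ are locally constant.
\end{itemize}
Base-changing the rational elliptic surface $S\to\bP^1$ of Section~\ref{sec:main.invis.thm} along $\exp\colon\C\to\C^*\subset\bP^1$ gives a proper map with infinitely many such points, and your argument detects none of them. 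The same happens for $R^1$ of Lefschetz degenerations of curves.

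Separately, the ``rank comparison'' is not an argument. The claim that $H^*(\Sigma\setminus D_0,j^*\mathcal G)$ is countably generated is also wrong, since $H^1(\C\setminus\Z,\Z)\cong\prod_{\Z}\Z$. For the $Q$-part that claim is not even needed. Since $\Sigma\setminus D_0$ is connected, $H^0(\Sigma,j_*j^*\mathcal G)$ is finitely generated. Hence $\prod_pQ_p$ modulo a finitely generated subgroup injects into $H^1(\Sigma,\mathcal G/K)$, which is a quotient of $H^1(\Sigma,\mathcal G)$ because $H^2(\Sigma,K)=0$. Note also that $H^0(\Sigma,K)=\prod_pK_p$, not $\bigoplus_pK_p$.

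What is really missing is a criterion that detects monodromy defects and specialization defects at once. The paper's argument runs as follows:
\begin{itemize}
\item Discard skyscraper subsheaves.
\item Reduce to a contractible surface via a tree $\Gamma\supset D$ and Mayer--Vietoris; this is where $b_1(\Sigma)<\infty$ is used.
\item Cover the surface by disjoint discs $U_i\ni 0_i$ and a connected $V$ missing $D$, with every $U_i\cap V$ contractible.
\item Show that $H^0(U_i,\mathcal F)\to H^0(U_i\cap V,\mathcal F)$ is never surjective. Surjectivity would make $\mathcal F|_{U_i\setminus\{0_i\}}$ a trivial local system and the injection $\mathcal F|_{U_i}\to j_*(\mathcal F|_{U_i\setminus\{0_i\}})$ an isomorphism, so $\mathcal F$ would be locally constant at $0_i$.
\item Mayer--Vietoris then embeds $\prod_i\mathrm{coker}_i$, modulo the finitely generated image of $H^0(V,\mathcal F)$, into $H^1(\Sigma,\mathcal F)$.
\end{itemize}
Your closing idea of disjoint discs plus Mayer--Vietoris points in this direction. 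Without this non-surjectivity statement, however, the necessity direction remains unproved.
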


\begin{proof}
There is a smallest discrete subset $D\subseteq \Sigma$ such that the sheaves $R^if_*\Z_X$ are local systems on $\Sigma\setminus D$.
If $D$ is finite, then the groups
$$
H^j\bigl(\Sigma\setminus D,R^if_*\Z_X\bigr)
$$
are finitely generated, and hence so are the groups
$$
H^j\bigl(X\setminus f^{-1}(D),\mathbb{Z}\bigr)
$$
by the Leray spectral sequence.
The Mayer--Vietoris sequence now gives finite generation of the groups $H^j(X,\mathbb{Z})$.

If $D$ is infinite, then $\Sigma$ is noncompact, hence Stein. 
Artin's vanishing theorem for constructible sheaves on Stein manifolds therefore gives
$$
H^j\bigl(\Sigma,R^if_*\Z_X\bigr)=0 \qquad\text{for }j>1,
$$
see \cite[Theorem~10.3.8]{KS90}. 
Thus all differentials in the Leray spectral sequence vanish.
Therefore, the groups $H^i(X,\Z)$ are finitely generated if and only if each group
$$
H^j\bigl(\Sigma,R^if_*\Z_X\bigr)
$$
is finitely generated.
The proposition therefore follows from Lemma \ref{lem:D-finite} below.
\end{proof}

\begin{lemma} \label{lem:D-finite}
Let $\Sigma$ be a connected orientable topological surface with $b_1(\Sigma)<\infty$.
Let ${\mathcal F}$ be a sheaf of abelian groups on $\Sigma$ whose stalks are finitely generated. 
Let
$$
\Sigma^\circ=\{x\in \Sigma \mid {\mathcal F}\text{ is a local system on a neighborhood of }x\}
$$
and put $D=\Sigma\setminus \Sigma^\circ$.  Assume that $D$ is discrete.  Then the groups
$H^i(\Sigma,\mathcal{F})$ are finitely generated for every $i$ if and only if $D$ is finite. 
\end{lemma}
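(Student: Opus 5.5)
We compare $\mathcal F$ with the local system it restricts to on $\Sigma^\circ$, using the long exact sequence for the open inclusion $j\colon \Sigma^\circ\hookrightarrow\Sigma$ and the closed inclusion $i\colon D\hookrightarrow \Sigma$. Since $D$ is discrete and closed, we have the exact sequence of sheaves
\[
0\to j_!j^*\mathcal F\to \mathcal F\to i_*i^*\mathcal F\to 0,
\]
with $H^k(\Sigma,i_*i^*\mathcal F)=\prod_{d\in D}\mathcal F_d$ for $k=0$ and $0$ otherwise. Moreover $H^k(\Sigma,j_!j^*\mathcal F)=H^k_c$-type cohomology of $\Sigma^\circ$ relative to the ends at $D$. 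Concretely, choosing small disjoint closed discs $\Delta_d$ around $d\in D$, excision identifies it with $H^k(\Sigma\setminus \bigcup\Delta_d^\circ,\ \bigcup \partial\Delta_d;\ L)$, where $L=\mathcal F|_{\Sigma^\circ}$.

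\emph{$D$ finite $\Rightarrow$ finite generation.} If $D$ is finite, then $\Sigma':=\Sigma\setminus\bigcup\Delta_d^\circ$ is a surface with finitely many boundary circles and $b_1(\Sigma')<\infty$. A surface of finite type is homotopy equivalent to a finite graph (noncompact case) or to a closed surface or a surface with boundary; in any case it is homotopy equivalent to a finite CW complex. Hence the cohomology of $\Sigma'$ and of $(\Sigma',\partial)$ with coefficients in a local system with finitely generated stalks is finitely generated. Here I would use that a connected orientable surface with finite $b_1$ and finitely many ends is of finite type; this is the classification of noncompact surfaces (Kerékjártó--Richards), or more simply, finite $b_1$ implies the surface is the interior of a compact surface minus finitely many points, since the orientability genus and the number of ends are controlled by $b_1$. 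Together with the finite product $\prod_{d\in D}\mathcal F_d$, the long exact sequence gives finite generation of $H^k(\Sigma,\mathcal F)$.

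\emph{$D$ infinite $\Rightarrow$ some group is not finitely generated.} This is the main obstacle. Then $\Sigma$ is noncompact. The key local input is that each $d\in D$ contributes nontrivially. If $\mathcal F$ were a local system near $d$, then $d\notin D$; so for a small disc $\Delta_d$, the restriction map $\mathcal F_d=H^0(\Delta_d,\mathcal F)\to H^0(\Delta_d\setminus d, L)$ is not an isomorphism, or $H^1(\Delta_d,\mathcal F)=0$ fails to match $H^1(\Delta_d\setminus d,L)=\operatorname{coker}(T_d-1)$. Equivalently, the local cohomology $H^*_{\{d\}}(\Delta_d,\mathcal F)\neq 0$. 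Indeed, a constructible sheaf on a disc with vanishing local cohomology at the center is the pushforward of its restriction, and if all local cohomology vanishes then $\mathcal F\cong Rj_*L$ near $d$ with $R^1j_*L=0$, which forces the monodromy $T_d$ to satisfy $\operatorname{coker}(T_d-1)=0$. I would then still need to argue that $\mathcal F$ is locally constant, which I expect needs care. Perhaps it is cleaner to use the triangle $R\Gamma_D\mathcal F\to R\Gamma(\Sigma,\mathcal F)\to R\Gamma(\Sigma^\circ,L)$, where
\[
H^k_D(\Sigma,\mathcal F)=\bigoplus_{d\in D}H^k_{\{d\}}(\mathcal F)
\]
is a direct sum over infinitely many nonzero groups, so it is not finitely generated in some degree $k\le 2$ (pigeonhole over the degrees $0,1,2$). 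Meanwhile $H^*(\Sigma^\circ,L)$ may itself be huge, since $\Sigma^\circ$ has infinitely many punctures. So I would instead compare with $\Sigma$ directly. Since $b_1(\Sigma)<\infty$, $\Sigma$ deformation retracts, away from a compact subsurface $K$, onto finitely many ends, and infinitely many $d$ lie outside $K$. Using Mayer--Vietoris for $\Sigma=K\cup(\Sigma\setminus K^\circ)$, I would reduce to showing that on an annular or planar end region $W\supset$ infinitely many points of $D$, $H^*(W,\mathcal F)$ is not finitely generated modulo the finitely generated boundary contributions. On such planar pieces, the map from the infinite sum of local cohomologies injects enough (after quotienting by the finitely generated image of the cohomology of $W$ minus discs, which is a planar surface with infinitely generated $H^1$ — there I would bound its image using the one outer boundary) to conclude. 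I expect making this last bookkeeping rigorous, and in particular excluding cancellation between infinitely many local contributions and the $H^1$ of the punctured region, to be the hardest step.
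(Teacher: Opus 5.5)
\textbf{Verdict: there is a genuine gap in the direction ``$D$ infinite $\Rightarrow$ some $H^i$ not finitely generated''.} Your argument for ``$D$ finite $\Rightarrow$ finitely generated'' is fine; the paper simply calls it clear. The converse fails because the local input you rely on is false. From $d\in D$ you cannot conclude that $H^\ast_{\{d\}}(\mathcal F)\neq 0$. Nor does your dichotomy hold, namely that either $\mathcal F_d\to H^0(\Delta_d\setminus\{d\},L)$ is not an isomorphism or $\operatorname{coker}(T_d-1)\neq 0$.

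Here is a counterexample. Near $d$, take $\mathcal F=j_!L$, where $L$ has monodromy $T$ with $T-1\in\mathrm{GL}_r(\Z)$. For instance, $T=\begin{pmatrix}0&-1\\1&1\end{pmatrix}$, with characteristic polynomial $x^2-x+1$; this is the cusp monodromy of Example~\ref{ex:CK-cubic}. Then $\mathcal F_d=0=L^{T}$ and $\operatorname{coker}(T-1)=0$, so $\mathcal F\cong Rj_*L$ and all local cohomology at $d$ vanishes. Yet $\mathcal F$ is not locally constant at $d$. So the step you deferred (``still need to argue that $\mathcal F$ is locally constant'') cannot be carried out.

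The same happens globally. Take $\Sigma=\C$, $D=\Z$, and this monodromy around every integer. Then $R\Gamma_D\mathcal F=0$, and the infinitely generated group is $H^1(\C,\mathcal F)\cong H^1(\C\setminus\Z,L)$. This is the cokernel of the map $L_x\to\prod_{n\in\Z}L_x$, $v\mapsto((T-1)v)_n$, which is exactly the term you set aside as ``may itself be huge''. Your final sketch again tries to extract infinitely many independent classes from local cohomology at the points of $D$, so it cannot work in general. It is also only a sketch, not an argument.

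\textbf{The missing idea (used in the paper).} Test non-local-constancy against sections over a contractible piece $W_d\subset U_d\setminus\{d\}$ of a small punctured disc, not against the monodromy invariants $H^0(U_d\setminus\{d\},L)=L^{T_d}$. Over $W_d$ you see the whole stalk $L_x$.
\begin{enumerate}[(1)]
\item Remove the maximal skyscraper subsheaf. If it has infinitely many nonzero stalks, $H^0(\Sigma,\mathcal F)$ is already not finitely generated. Otherwise pass to the quotient, which changes only finitely many stalks.
\item After this, $H^0(U_d,\mathcal F)\to H^0(W_d,\mathcal F)$ is never surjective for $d\in D$. Surjectivity would force trivial monodromy and make the injection $\mathcal F|_{U_d}\hookrightarrow j_*L$ an isomorphism onto a constant sheaf.
\item Use $b_1(\Sigma)<\infty$ to place $D$ on a properly embedded tree. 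Reduce via Mayer--Vietoris (with finitely generated error terms) to a contractible thickening of this tree.
\item Cover the thickening by disjoint discs $U_d$ and one connected $V$ disjoint from $D$, with $V\cap U_d=W_d$ contractible.
\item In the Mayer--Vietoris sequence, the cokernel of $\prod_d H^0(U_d,\mathcal F)\oplus H^0(V,\mathcal F)\to\prod_d H^0(W_d,\mathcal F)$ injects into $H^1(\Sigma,\mathcal F)$. This cokernel is an infinite product of nonzero groups modulo the image of the finitely generated group $H^0(V,\mathcal F)$, hence not finitely generated.
\end{enumerate}
In the $j_!L$ example above, the local cokernels in step (2) are the full stalks $L_x$.
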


\begin{proof}
If $D$ is finite, then the claim is clear.
Conversely, assume that $D$ is infinite.
Let ${\mathcal F}_{sky}\subset {\mathcal F}$ be the maximal skyscraper subsheaf. 
If ${\mathcal F}_{sky}$ has infinitely many stalks, $H^0(\Sigma,\mathcal{F})$ is not finitely generated.
We may thus assume that this is not the case.
Then we replace ${\mathcal F}$ by ${\mathcal F}/{\mathcal F}_{sky}$ and assume that ${\mathcal F}$ does not contain skyscraper subsheaves.

Note that $D$ is closed and discrete, hence is countable and has no accumulation points in $\Sigma$.
In particular, we can write
$$
D=\{0_i\mid i=1,2,\ldots\}.
$$
Since $\Sigma$ is orientable and $b_1(\Sigma)<\infty$, it is given by attaching to a compact orientable surface (called compact core) with boundary an infinite annulus to each of the finitely many boundary components.
We can therefore choose a closed embedded tree $\Gamma\subset\Sigma$ which contains $D$ and is obtained from a finite tree contained in the compact core by attaching finitely many proper rays, one for each of the attached ends.  

Let $W_1$ be a sufficiently small open thickening of $\Gamma$, chosen
so that $\Gamma$ is a deformation retract of $W_1$, and put
$
W_2\coloneq\Sigma\setminus\Gamma.
$
Then $W_1$ is connected and contractible, while $W_2$ and $W_1\cap W_2$ have the homotopy type of finite CW complexes and do not contain any points of $D$.
The Mayer--Vietoris sequence therefore reduces us to the case where $\Sigma=W_1$ is a connected contractible surface without boundary.

We then choose an open covering $\Sigma= V\cup \bigsqcup_{i=1}^\infty U_i$, where the $U_i$ are pairwise disjoint open discs with centers $0_i\in U_i$, $V\cap U_i$ is nonempty and contractible, and $V$ is connected and disjoint from $D$.
(For instance, we can take $V=\{(x,y)\in \R^2\mid y> 1/8 \}$ and $U_i$ a ball of radius $1/4$ around $0_i=(i,0)\in \R^2$.)
By the definition of $\Sigma^\circ$ and $D$,
the sheaf ${\mathcal F}$ is locally constant outside of $D$, but ${\mathcal F}$ is not locally constant in the neighborhood of any of the points $0_i$.

\medskip

\textbf{Claim.} The map $H^0(U_i,{\mathcal F})\to H^0(V\cap U_i,{\mathcal F})$ is not surjective.
 
\begin{proof} 
Set
$$
j\colon U_i\setminus\{0_i\}\hookrightarrow U_i \qquad\text{and}\qquad \mathcal L\coloneq \mathcal F|_{U_i\setminus\{0_i\}}.
$$
There is a natural morphism
\begin{align} \label{eq:claim:map:1}
\mathcal F|_{U_i}\longrightarrow j_\ast \mathcal L.
\end{align}
Its kernel is supported at $0_i$, hence is a skyscraper subsheaf of $\mathcal F$. 
By our reduction, this kernel is zero.

Suppose, for contradiction, that
\begin{align} \label{eq:claim:map:2}
H^0(U_i,\mathcal F)\longrightarrow H^0(V\cap U_i,\mathcal F)
\end{align}
is surjective. 
Since $V\cap U_i$ is nonempty and contractible and $\mathcal F$ is locally constant there, its space of sections identifies with any stalk of $\mathcal L$ on $V\cap U_i$. 
Surjectivity of \eqref{eq:claim:map:2} thus implies that $\mathcal L$ is a trivial local system on $U_i\setminus\{0_i\}$.
But then $j_\ast \mathcal L$ is a trivial local system on $U_i$ and the injective map \eqref{eq:claim:map:1} is surjective by surjectivity of  \eqref{eq:claim:map:2}.
Hence, \eqref{eq:claim:map:1} is an isomorphism, which contradicts our assumptions.  
\end{proof}

Let $U=\bigsqcup_{i\geq 1} U_i$ and consider the Mayer--Vietoris sequence
$$
H^0(\Sigma,{\mathcal F}) \to \prod_{i=1}^{\infty} H^0(U_i,{\mathcal F}) \oplus H^0(V,{\mathcal F})\to \prod_{i=1}^{\infty} H^0(U_i\cap V,{\mathcal F})\to H^1(\Sigma,{\mathcal F}) .
$$
By the above claim, the cokernel of $\prod_{i=1}^{\infty} H^0(U_i,{\mathcal F}) \to \prod_{i=1}^{\infty} H^0(U_i\cap V,{\mathcal F})$ is not finitely generated.
Since ${\mathcal F}$ has no skyscraper subsheaf and the stalks of ${\mathcal F}$ are finitely generated,  $H^0(V,{\mathcal F})$ is finitely generated.
Hence, the above exact sequence shows that $H^1(\Sigma,{\mathcal F})$ is not finitely generated.
This concludes the proof of the lemma.
\end{proof}

\begin{proof}[Proof of Theorem \ref{thm:KP.rightway.thm}]  
Item  \ref{item:KP.rightway.thm.1} implies \ref{item:KP.rightway.thm.2} by Corollary~\ref{cor:whitehead}\ref{item:cor:whitehead:1}.
Since $\pi_1(\Sigma)\neq 0$, the universal cover of $\Sigma$ is contractible  and so \ref{item:KP.rightway.thm.2} implies \ref{item:KP.rightway.thm.3} by  Corollary \ref{cor:whitehead}\ref{item:cor:whitehead:2}.

By a theorem of Smale \cite[Main Theorem]{Smale}, $g\colon X\to \Sigma$ induces an isomorphism on fundamental groups, because it has simply connected fibers.
Hence, $\widetilde X=X\times_{\Sigma}\widetilde \Sigma$, where $\widetilde X$ and $\widetilde \Sigma$ denote the universal covers of  $X$ and $\Sigma$, respectively.
Since $\pi_1(\Sigma)\neq 0$, the classification of Riemann surfaces implies that $\widetilde \Sigma\to \Sigma$ has infinite degree.
Moreover, $b_1(\widetilde \Sigma)=0$. 
Therefore, item \ref{item:KP.rightway.thm.3} implies \ref{item:KP.rightway.thm.1} by Proposition \ref{prop:D-finite}, applied to the induced map $\widetilde X\to \widetilde \Sigma$.

Assume now that $X$ is smooth.
By Proposition~\ref{prop:smooth-fibration-over-Sigma},  item \ref{item:KP.rightway.thm.1} implies item \ref{item:KP.rightway.thm.4} if $\dim X\geq 4$. 
Conversely, if \ref{item:KP.rightway.thm.4} holds, then the universal cover $\widetilde X$ of $X$ is  a $C^\infty$-fiber bundle over the universal cover $\widetilde \Sigma$ of $\Sigma$.
Since $\Sigma$ is not simply connected, $\widetilde \Sigma$ is either $\C$ or the disc $\Delta$.
It follows that $\widetilde X$ is homotopy equivalent to a fiber of $\widetilde X\to \widetilde \Sigma$ and so \ref{item:KP.rightway.thm.3} holds. 
\end{proof}

\begin{remark}
When $\Sigma$ is compact, then the equivalence of \ref{item:KP.rightway.thm.1}, \ref{item:KP.rightway.thm.2} and \ref{item:KP.rightway.thm.3} in Theorem \ref{thm:KP.rightway.thm} can be deduced from \cite[Propositions 5.5 and 5.6, and Lemma 5.9]{LMW}; see also \cite[Corollary 1.6]{LMW} for the case of elliptic curves.
\end{remark}
 
\section{One-forms without zeros and exactness of the Aomoto complex} \label{sec:Aomoto}
  
We were led to some of the constructions used in this paper while analyzing the approach to Kotschick’s conjecture developed in \cite{S}, which plays a central role in the known low-dimensional cases, see \cite{S,HS,pietig}.
The approach relies on the following property:
\begin{kotschickconditions}[resume=kotschick]
\item\label{cond:C}
There exists a holomorphic one-form $\omega\in H^0(X,\Omega^1_{X})$, such that for every finite étale cover $\pi\colon X'\to X$, the Aomoto complex\footnote{Introduced in \cite{MR2799182} in a different context.}
$$
H^0(X',\mathbb C) \xrightarrow{\wedge\pi^*\omega} H^1(X',\mathbb C)  \xrightarrow{\wedge\pi^*\omega} H^2(X',\mathbb C)  \xrightarrow{\wedge\pi^*\omega} \cdots
$$
has vanishing cohomology in every degree.
\end{kotschickconditions}
It was shown in \cite{S} that Property \condref{B} implies Property \condref{C}.
It was further conjectured in \emph{loc.~cit.~}that for any compact K\"ahler manifold $X$, Property \condref{C} implies \condref{A}. 
In this section we will prove Theorem \ref{thm:Aomoto-intro}, stated in the introduction, which implies that in fact \condref{C} implies neither \condref{A} nor \condref{B}.

\subsection{A rational cohomology torus}\label{sec2} 
We recall the Iitaka torus tower from \cite{DJL}.  

Let $E$ and $F$ be two elliptic curves, and let 
$$
p_C \colon C\longrightarrow E
$$ 
be a branched double cover of $E$. 
Let $\sigma_1\colon F\to F$ be an order-two automorphism defined by translating by a nonzero 2-torsion point, and denote the deck involution of $p_C$ by $\sigma_2$. 
Let $\sigma=(\sigma_1, \sigma_2)$ be the induced diagonal action on $F\times C$, and define
$$
S=(F\times C)/\langle \sigma\rangle.
$$
Since $\sigma$ is free, $S$ is a smooth projective surface. Let 
\begin{align} \label{eq:p-Aomoto}
p\colon S\longrightarrow C/\langle \sigma_2\rangle=E
\end{align}
be the natural projection. 
Then, away from the branch locus of $p_C$, the fibers of $p$ are isomorphic to $F$. 
Over the branch locus of $p_C$, $p$ has double fibers, and these fibers (with induced reduced scheme structure) are isomorphic to $F/\langle \sigma_1\rangle$. 
The important feature of $S$ is that $R^ip_* \bQ_S$ is a trivial local system on $E$ of rank $1$, $2$, $1$, when $i=0, 1, 2$, respectively, and for $i>2$, $R^ip_\ast \bQ_S=0$, cf.~Lemma \ref{lemma:pushforward} below. 
This implies that the Albanese map of $S$ induces an isomorphism of rational cohomology groups. In particular, the rational cohomology ring of $S$ is isomorphic to the rational cohomology ring of a 2-dimensional complex torus.
However, since $p$ has double fibers at the branch locus of $p_C$, $R^2p_* \bZ_S$ is not a local system near those points. 

Geometrically, $p$ is a locally trivial fibration with fiber $F$ away from the branch locus of $p_C$. 
Since locally in $S$, the map $p$ factors through the ramified cover $p_C\colon C\to E$, $p$ fails to be a submersion precisely over the branch locus of $p_C$. 
 
\subsection{Construction of the example}
Let $N\ge 3$ be such that there is a closed embedding $\iota \colon S\to \bP^N\times E$ which lifts $p\colon S\to E$ from \eqref{eq:p-Aomoto}.
Via this map we consider $S$ as a subvariety of $\bP^N\times E$. 
Let 
\begin{align} \label{def:X-1}
X\coloneq \Bl_S(\bP^N\times E),
\end{align}
which is a smooth projective variety of dimension $N+1$.
We further consider the projection
$$
h\colon X\longrightarrow E
$$
which agrees with the Albanese map of $X$ and induces an isomorphism on fundamental groups.  

In the remainder of this section, $X$ will always denote the blow-up in \eqref{def:X-1} and we will show that it satisfies the conclusions from Theorem \ref{thm:Aomoto-intro}.

\subsection{Cohomology of the Aomoto complexes}\label{sec4}
In this section, we prove the following proposition, which shows that $X$ satisfies an even stronger condition than Property \condref{C}. 

\begin{proposition}\label{prop:vanishing}
Let $\pi\colon {X'}\to X$ be any connected finite \'etale cover of $X$ and $\omega$ be any nonzero holomorphic one-form on ${X'}$. 
Then the complex
$$
H^0({X'}, \bC)\xrightarrow{\wedge \omega} H^1({X'}, \bC)\xrightarrow{\wedge \omega} H^2({X'}, \bC) \xrightarrow{\wedge \omega}\cdots 
$$
has vanishing cohomology in every degree. 
\end{proposition}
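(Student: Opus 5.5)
Every connected finite étale cover $X'$ will be shown to be of the form $\Bl_{S'}(\bP^N\times E')$, with $H^*(X',\C)$ a free module over $H^*(E',\C)$. Exactness of the Aomoto complex then follows from exactness of the Koszul-type complex $(H^*(E',\C),\wedge\eta)$ for nonzero $\eta$.

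\textbf{Step 1: structure of covers.} Since $h\colon X\to E$ induces an isomorphism on $\pi_1$, every connected finite étale cover has the form $X'=X\times_E E'$ for an isogeny $E'\to E$. Hence $X'=\Bl_{S'}(\bP^N\times E')$ with $S'=S\times_E E'$ and induced map $p'\colon S'\to E'$. The pair $(S',p')$ has the same shape as $(S,p)$: it is again a quotient $(F\times C')/\langle\sigma\rangle$ with $C'=C\times_E E'$. The pushforwards $R^ip'_*\Q$ are the pullbacks of $R^ip_*\Q$, so they are trivial local systems of ranks $1,2,1$. This last point uses Lemma \ref{lemma:pushforward}, which I take as given.

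\textbf{Step 2: cohomology as an $H^*(E')$-module.} By the blow-up formula \eqref{eq:blow-up-pushforward} with $\C$-coefficients,
\[
R h'_*\C\simeq R\,\mathrm{pr}_*\C\oplus\bigoplus_{r=1}^{c-1}Rp'_*\C[-2r].
\]
All summands have trivial local systems as cohomology sheaves. By Deligne's decomposition theorem for the smooth projective morphism $\bP^N\times E'\to E'$, and for $p'$ via its trivial $R^ip'_*$ (or simply by formality over a curve with trivial local systems), these complexes split as direct sums of shifted constant sheaves. It follows that $H^*(X',\C)\cong H^*(E',\C)\otimes W$ for a graded vector space $W$. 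I need this isomorphism to be compatible with the cup-product action of $h'^*H^*(E',\C)$. That should follow from the projection formula, since the isomorphism is obtained from $Rh'_*$ by taking hypercohomology over $E'$.

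\textbf{Step 3: one-forms come from $E'$.} Since $h'$ is the Albanese map of $X'$ and $h'^*$ is an isomorphism on $H^1$ (using $H^1$ of the fibres $=0$ and Step 2), every holomorphic one-form is $\omega=h'^*\eta$ with $0\ne\eta\in H^0(E',\Omega^1)$. So $\wedge\omega$ acts on $H^*(E',\C)\otimes W$ as $(\wedge\eta)\otimes\mathrm{id}$. The complex $(H^*(E',\C),\wedge\eta)$ is $\C\to\C^2\to\C$, with first map $1\mapsto\eta$ and second map $\alpha\mapsto\alpha\wedge\eta$. Since $\eta\wedge\bar\eta\neq0$, this complex is exact, and tensoring with $W$ preserves exactness.

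\textbf{Main obstacle.} The delicate point is Step 2, specifically the compatibility of the splitting with the module structure. Over $\Z$ this fails because of the double fibres, so coefficients must be $\C$. Concretely, I would use that $E'$ is a curve: $Rp'_*\C$ has trivial cohomology sheaves, and the only possible extension classes lie in $\mathrm{Ext}^2$ of constant sheaves on $E'$, i.e. in $H^2(E')$. These are killed using the hard Lefschetz / relative ample class argument, or the decomposition theorem for the projective morphism $p'$. A free $H^*(E')$-module structure then follows from the Leray–Hirsch type statement.
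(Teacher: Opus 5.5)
Your proposal is correct and follows essentially the same route as the paper. You use $\pi_1(X)\cong\pi_1(E)$ to write $X'=X\times_E E'\cong\Bl_{S'}(\bP^N\times E')$, then the blow-up formula and the splitting of $Rh'_*\C$ into shifted constant sheaves to show that $H^*(X',\C)$ is a free $H^*(E',\C)$-module, and finally exactness of the Koszul complex $(H^*(E',\C),\wedge\eta)$. Your use of proper base change for $R^ip'_*\Q$ and your explicit treatment of module compatibility via the projection formula only fill in details the paper leaves implicit.
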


We begin with the following lemma.

\begin{lemma}\label{lemma:pushforward}
The map $p\colon S\to E$ satisfies:
\begin{equation}\label{eq:trivial LS}
Rp_* \bQ_S \cong \bQ_E\oplus \bQ_E[-1]^{\oplus 2}\oplus \bQ_E[-2].
\end{equation}
\end{lemma}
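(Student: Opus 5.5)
The plan is to compute $Rp_*\bQ_S$ by pulling back to the finite \'etale cover $q\colon F\times C\to S$ and then taking $\sigma$-invariants. This works because, with $\bQ$-coefficients, taking invariants under a finite group is exact.

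Set $\varphi\coloneq p_C\circ\operatorname{pr}_2\colon F\times C\to E$, so that $\varphi=p\circ q$. Since $\sigma$ acts freely, $q_*\bQ_{F\times C}$ carries a $\langle\sigma\rangle$-action with $(q_*\bQ_{F\times C})^{\sigma}=\bQ_S$. The averaging projector $\tfrac12(1+\sigma)$ splits this inclusion as a direct summand. Applying $Rp_*$ therefore gives
\[
Rp_*\bQ_S\cong \bigl(R\varphi_*\bQ_{F\times C}\bigr)^{\sigma},
\]
where invariants are taken as the image of the averaging idempotent in the derived category. This is legitimate because the category is $\bQ$-linear and Karoubian: the idempotent acts on each cohomology sheaf, and the decomposition can be made explicit after the next step.

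Next, compute $R\varphi_*\bQ=Rp_{C*}R\operatorname{pr}_{2*}\bQ_{F\times C}$. For the product projection we have $R\operatorname{pr}_{2*}\bQ_{F\times C}\cong\bigoplus_i H^i(F,\bQ)\otimes\bQ_C[-i]$, by the K\"unneth formula; concretely, a choice of cocycle representatives splits it. Because $p_C$ is finite, $Rp_{C*}=p_{C*}$ is exact. Hence
\[
R\varphi_*\bQ\cong\bigoplus_i H^i(F,\bQ)\otimes p_{C*}\bQ_C[-i].
\]
The involution $\sigma=(\sigma_1,\sigma_2)$ acts factorwise. Since $\sigma_1$ is a translation, it is homotopic to the identity and so acts trivially on $H^i(F,\bQ)$. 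The invariant part is therefore $\bigoplus_i H^i(F,\bQ)\otimes (p_{C*}\bQ_C)^{\sigma_2}[-i]$.

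The remaining point, which I expect to be the only step requiring care, is that $(p_{C*}\bQ_C)^{\sigma_2}=\bQ_E$ even at the branch points. To see this, consider the adjunction map $\bQ_E\to p_{C*}\bQ_C$, which lands in the invariants, and check stalks. Over an unbranched point, the stalk is $\bQ^2$ with the two summands swapped by $\sigma_2$, so the invariants are the diagonal $\bQ$, which is exactly the image of $\bQ_E$. Over a branch point there is a single preimage, so the stalk is $\bQ$ with trivial action, and the adjunction map is again an isomorphism. Since $H^i(F,\bQ)$ has dimensions $1,2,1$ for $i=0,1,2$, this gives
\[
Rp_*\bQ_S\cong\bQ_E\oplus\bQ_E[-1]^{\oplus2}\oplus\bQ_E[-2].
\]
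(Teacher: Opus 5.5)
Your argument is correct, and it takes a different route from the paper's.

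\textbf{The paper's route.} The paper proceeds in three steps:
\begin{enumerate}
\item It shows that $p$ is a $\bQ$-homology fiber bundle, by identifying the specialization map near a branch point with the \'etale double cover $F\to F/\langle\sigma_1\rangle$.
\item It deduces that each $R^ip_*\bQ_S$ is a trivial local system. The monodromy over $E^\circ$ is by translations, and $\pi_1(E^\circ)\to\pi_1(E)$ is surjective.
\item It invokes the decomposition theorem to split $Rp_*\bQ_S$ into its shifted cohomology sheaves.
\end{enumerate}

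\textbf{Your route.} You work on the \'etale cover $F\times C\to S$ and get the derived-category splitting directly. K\"unneth gives $R\varphi_*\bQ\cong R\Gamma(F,\bQ)\otimes p_{C*}\bQ_C$. By naturality of this isomorphism under $\sigma_1\times\sigma_2$, the involution acts as $\sigma_1^*\otimes\sigma_2^*=\operatorname{id}\otimes\sigma_2^*$. So the invariant summand is $R\Gamma(F,\bQ)\otimes\bQ_E$, and this splits because complexes of $\bQ$-vector spaces are formal.

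\textbf{A point to state explicitly.} Say in your write-up that the ``factorwise'' action comes from the naturality of the K\"unneth isomorphism. If you only knew the action on the cohomology sheaves $R^i\varphi_*\bQ$, you would get that each $R^ip_*\bQ_S$ is constant. You would not get the splitting, since $\operatorname{Ext}^1_E(\bQ_E,\bQ_E)$ and $\operatorname{Ext}^2_E(\bQ_E,\bQ_E)$ are nonzero. That is exactly where the paper needs the decomposition theorem.

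\textbf{Comparison.} Your approach is more elementary and self-contained. The stalk computation of $(p_{C*}\bQ_C)^{\sigma_2}$ at the branch points replaces the specialization-map argument, and triviality of the local systems comes for free. The paper's argument is less tied to the explicit quotient description of $S$. It applies verbatim to any projective morphism that is a $\bQ$-homology fiber bundle with trivial monodromy.
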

\begin{proof}
    First, we prove that $p\colon S\to E$ is a $\Q$-homology fiber bundle. 
    In fact, away from the branch locus of $p_C$, $p$ is an $F$-fiber bundle. 
    Let $x\in E$ be a branch point of $p_C$, and let $x'\in E$ be a point different from but very close to $x$. 
    Then there is a specialization map between the fibers of $p$, $\mathrm{sp}\colon S_{x'}\to S_{x}$, well-defined up to homotopy. 
    This map can be chosen to be the finite \'etale double cover $F\to F/\langle \sigma_1 \rangle$. 
    Hence, it induces an isomorphism on rational (co-)homology.
    Therefore, $p\colon S\to E$ is a $\Q$-homology fiber bundle. 
    In particular, $R^ip_* \bQ_S$ is a local system for every $i$. 

    Away from the branch locus of $p_C$, $p$ is an $F$-fiber bundle, and the monodromy action is given by translation. 
    Thus, the restriction of $R^ip_* \bQ_S$ to $E^\circ\coloneq E\setminus \operatorname{Br}(p_C)$ is the trivial local system. 
    Since the inclusion $E^\circ\to E$ induces a surjective map on fundamental groups, $R^ip_* \bQ_S$ is a trivial local system for all $i$. 
    Thus, the isomorphism \eqref{eq:trivial LS} follows from the decomposition theorem \cite{BBD} and the fact that a general fiber of $p$ is an elliptic curve. 
\end{proof}

\begin{corollary}\label{cor:sum} 
There are $a_i\in \bZ_{\geq 0}$ with  
\[
R{h}_* \bQ_{X} \cong \bigoplus_{0\leq i\leq 2N}\bQ_{E}[-i]^{\oplus a_i} .
\] 
\end{corollary}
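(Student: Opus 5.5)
We apply the rational version of the blow-up formula \eqref{eq:derived-blow-up-formula} to $\pi\colon X=\Bl_S(\bP^N\times E)\to \bP^N\times E$. Here $S\subset \bP^N\times E$ is a smooth closed subvariety of codimension $c=N+1-2=N-1\geq 2$. The formula, with $\Q$-coefficients, gives
\[
R\pi_*\Q_X\simeq \Q_{\bP^N\times E}\oplus\bigoplus_{r=1}^{N-2}\iota_*\Q_S[-2r].
\]
We then push forward along the projection $q\colon \bP^N\times E\to E$. Since $h=q\circ\pi$ and $q\circ\iota=p$, we obtain
\[
Rh_*\Q_X\simeq Rq_*\Q_{\bP^N\times E}\oplus\bigoplus_{r=1}^{N-2}Rp_*\Q_S[-2r].
\]

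Next we compute each summand. For the first, $q$ is a trivial product projection, so $Rq_*\Q\simeq \bigoplus_{j=0}^{N}\Q_E[-2j]$ by the Künneth formula, or equivalently by the degeneration of the Leray spectral sequence for a projective bundle. For the remaining summands, Lemma \ref{lemma:pushforward} gives $Rp_*\Q_S\simeq \Q_E\oplus\Q_E[-1]^{\oplus2}\oplus\Q_E[-2]$. Substituting these, $Rh_*\Q_X$ becomes a finite direct sum of shifted constant sheaves $\Q_E[-i]$.

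It remains to bound the shifts. The largest shift from the second group is $2(N-2)+2=2N-2$, and the largest from the first is $2N$. Hence only $0\le i\le 2N$ occur, and the multiplicities $a_i\geq 0$ can be read off explicitly.

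The one step that needs care is the $\Q$-coefficient blow-up decomposition in the derived category, as opposed to a mere isomorphism of cohomology sheaves. The paper already asserts this integrally in \eqref{eq:derived-blow-up-formula}, so tensoring with $\Q$ suffices. Alternatively, one can argue from the decomposition theorem \cite{BBD}: $h$ is projective, its source $X$ is smooth, and all $R^ih_*\Q$ are trivial local systems by the displayed decomposition of cohomology sheaves, so $Rh_*\Q_X$ splits as the sum of its shifted cohomology sheaves.
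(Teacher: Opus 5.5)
Your proposal is correct and takes essentially the same route as the paper: apply the blow-up decomposition \eqref{eq:derived-blow-up-formula} to $X\to\bP^N\times E$, then push forward to $E$ using K\"unneth for $\bP^N\times E$ and Lemma~\ref{lemma:pushforward} for $S$. Your explicit bookkeeping of the shifts (with $c=N-1$, the largest shift is $2N$) simply makes precise the paper's one-line remark that the bound $0\le i\le 2N$ follows because $h$ has relative dimension $N$.
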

\begin{proof}
Recall that ${h}\colon {X}\to {E}$ is equal to the composition of the blow-up map ${X}\to \bP^N\times {E}$ and the projection $\bP^N\times {E}\to {E}$. 
Taking the derived pushforward of $\bQ_{X}$ to $\bP^N\times {E}$, we obtain a direct sum of the constant sheaf $\bQ_{\bP^N\times {E}}$ with a direct sum of shifted trivial local systems on ${S}$, cf.~\eqref{eq:derived-blow-up-formula}. 
Note further that the derived pushforwards of $\bQ_{\bP^N\times {E}}$ and $\bQ_{S}$ to ${E}$ are direct sums of shifted trivial local systems on ${E}$. 
Hence, $R{h}_*(\bQ_{X})$ is isomorphic to a direct sum of shifted trivial local systems on ${E}$.
The precise version in the corollary follows because $h$ has relative dimension $N$. 
\end{proof}

The pullback map ${h}^*\colon H^\ast({E}, \bQ)\to H^\ast({X}, \bQ)$ gives $H^\ast({X}, \bQ)$ a graded $H^\ast({E}, \bQ)$-module structure. 

\begin{corollary}\label{cor:free}
As a graded $H^\ast({E}, \bQ)$-module, $H^\ast({X}, \bQ)$ is free. 
\end{corollary}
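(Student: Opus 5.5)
The plan is to read off the module structure from the decomposition in Corollary~\ref{cor:sum}. Since
\[
R h_*\bQ_X\cong\bigoplus_{0\le i\le 2N}\bQ_E[-i]^{\oplus a_i},
\]
applying $R\Gamma(E,-)$ gives a graded isomorphism of vector spaces
\[
H^\ast(X,\bQ)\cong\bigoplus_i H^{\ast-i}(E,\bQ)^{\oplus a_i},
\]
which is exactly the shape of a free graded $H^\ast(E,\bQ)$-module with $a_i$ generators in degree $i$. What has to be checked is that this identification respects the module structure, namely the action via $h^*$ and cup product.

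For compatibility I will use the projection formula. The cup product action of $\alpha\in H^j(E,\bQ)$ on $H^\ast(X,\bQ)=H^\ast(E,Rh_*\bQ_X)$ is given by cup product with $\alpha$ on $E$, using the natural pairing $\bQ_E\otimes Rh_*\bQ_X\to Rh_*\bQ_X$, and this pairing is compatible with any decomposition of $Rh_*\bQ_X$ in the derived category as a $\bQ_E$-module object. Every object of $D(E)$ is a module over $\bQ_E$ in a unique way, and any morphism in $D(E)$ is $\bQ_E$-linear. So the isomorphism from Corollary~\ref{cor:sum} intertwines the action of $H^\ast(E,\bQ)$. On each summand $\bQ_E[-i]$ the action is the usual cup product on $H^{\ast-i}(E,\bQ)$, so each summand is free of rank one on a generator in degree $i$.

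The main point to get right is precisely this compatibility of the abstract derived-category isomorphism with cup products. If it is awkward to handle directly, a concrete fallback is available. Choose classes $e_{i,k}\in H^i(X,\bQ)$ that restrict to a basis of $H^i(X_t,\bQ)$ on a fiber $X_t$; this restriction is surjective because the $E_2$ degeneration implied by the decomposition gives surjectivity $H^i(X)\to H^0(E,R^ih_*\bQ)$, and the local systems are trivial. Then define
\[
\Phi\colon\bigoplus_{i,k} H^\ast(E,\bQ)\,e_{i,k}\longrightarrow H^\ast(X,\bQ),\qquad (\alpha,e_{i,k})\mapsto h^*\alpha\cup e_{i,k}.
\]
This is a Leray--Hirsch argument. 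The map $\Phi$ is compatible with the Leray filtration, and on associated gradeds it is the isomorphism $H^p(E)\otimes H^0(E,R^q h_*\bQ)\cong E_2^{p,q}=E_\infty^{p,q}$. A filtered map that is an isomorphism on graded pieces is an isomorphism, so $\Phi$ is bijective and $H^\ast(X,\bQ)$ is free on the classes $e_{i,k}$.
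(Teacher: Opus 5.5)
Your proposal is correct and follows the paper's route. The paper likewise applies $R\Gamma(E,-)$ to the decomposition in Corollary~\ref{cor:sum} and reads off $H^\ast(X,\bQ)\cong\bigoplus_i (H^\ast(E,\bQ)[-i])^{\oplus a_i}$, simply asserting that this is an isomorphism of graded $H^\ast(E,\bQ)$-modules. Your projection-formula remark, that morphisms in $D(E)$ are $\bQ_E$-linear so the cup-product action is natural in the object, supplies the justification the paper leaves implicit, and your Leray--Hirsch fallback is also valid but not needed.
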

\begin{proof}
In the notation of Corollary \ref{cor:sum}, we have the following isomorphism of graded $H^\ast({E}, \bQ)$-modules,
$$
H^\ast({X}, \bQ)\cong  \bigoplus_{0\leq i\leq 2N} \left(H^\ast({E}, \bQ)[-i]\right)^{\oplus a_i}.
$$
Therefore, $H^\ast({X}, \bQ)$ is a free $H^\ast({E}, \bQ)$-module. 
\end{proof}

\begin{proof}[Proof of Proposition \ref{prop:vanishing}]
First, we consider the case when $\pi\colon X'\to X$ is the identity map. Since ${h}\colon {X}\to {E}$ coincides with the Albanese map of ${X}$, every nonzero holomorphic one-form $\omega$ on ${X}$ is of the form ${h}^*(\omega_{E})$ for some nonzero holomorphic one-form $\omega_{E}$ on ${E}$. 
By Corollary \ref{cor:free}, $H^\ast({X}, \bC)$ is a free graded $H^\ast({E}, \bC)$-module. 
Therefore, the Aomoto complex
\begin{equation}\label{eq8}
\left(H^\ast({X}, \bC), \wedge \omega\right) 
\end{equation}
is a direct sum of shifted copies of
\begin{equation*}%\label{eq9}
\left(H^\ast({E}, \bC), {\wedge \omega_{{E}}}\right). 
\end{equation*}
The latter is exact because $H^\ast({E}, \bC)$ is an exterior algebra on a two-dimensional vector space and wedge by a nonzero degree-one element gives the exact Koszul complex.
Therefore, the complex \eqref{eq8} also has trivial cohomology. 

In general, since $h\colon X\to E$ induces an isomorphism on fundamental groups, every finite \'etale cover $\pi\colon X'\to X$ is induced by a finite \'etale cover $E'\to E$. 
More precisely,  $X'=X\times_E E'$ and under this identification, $\pi$ is the projection onto the first factor. 
Recall that $X$ is obtained as the blow-up of $\bP^N\times E$ along $S$, and $S$ is a quotient of $F\times C$. Let $C'=C\times_E E'$, which is a smooth projective curve equipped with a ramified double cover $p_{C'}\colon C'\to E'$. 
Let $\sigma_2'$ denote the involution on $C'$ induced by
$\sigma_2$. 
Repeating the construction, we obtain
$$
S'=(F\times C')/\langle(\sigma_1,\sigma_2')\rangle.
$$ 
We further get maps $S'\to \bP^N\times E'\to E'$ which can be identified with the pullback of $S\to \bP^N\times E\to E$ along $E'\to E$.
Consequently, 
$$
X'\cong\Bl_{S'}(\bP^N\times E').
$$
Therefore, the conclusion established in the first paragraph applies equally to the morphism $X'\to E'$, proving the general case of Proposition~\ref{prop:vanishing}.
\end{proof}

\subsection{Proof of Theorem \ref{thm:Aomoto-intro}} 

\begin{proof}[Proof of Theorem \ref{thm:Aomoto-intro}]
Let $X$ be as in \eqref{def:X-1}.
By Lemma \ref{lem:relative-generic-projection}, we can choose $N=4$ and so $\dim X=5$.
Let $X'\to X$ be a finite \'etale cover.
By Proposition \ref{prop:vanishing}, for any nonzero $\omega'\in H^0(X',\Omega^1_{X'})$, the Aomoto complex $(H^\ast(X',\C),\wedge \omega')$ is exact.
It remains to show that any real closed one-form on $X$ has a zero.
Assume, for contradiction, that $X$ admits a real closed one-form without zeros.
Then $X$ fibers smoothly over $S^1$, see \cite{T}.  
We can pick such a $C^\infty$-fibration $X\to S^1$ such that $\pi_1(X)\to \pi_1(S^1)$ is surjective. 
Let $\widetilde X\to X$ be the infinite cyclic cover corresponding to the kernel of $\pi_1(X)\to \pi_1(S^1)$.
Then $\widetilde X$ is homotopy equivalent to a fiber of $X\to S^1$, hence has the homotopy type of a finite CW complex.

Recall that $h\colon X\to E$ induces an isomorphism on $\pi_1$ and let $\widetilde E\to E$ be the infinite cyclic covering, induced by the kernel of $\pi_1(E)\cong \pi_1(X)\to \pi_1(S^1)$.
Then $h$ induces a natural map $\tilde h\colon \widetilde X\to \widetilde E$.
This map is not a $\Z$-homology fiber bundle. 
Indeed, $R^{4}\widetilde h_\ast \Z$ fails to be locally constant at every
point of $\widetilde E$ lying above the branch locus of $p_C\colon C\to E$, and this is an infinite set. 
Proposition \ref{prop:D-finite} therefore implies that $H^{i}(\widetilde X,\Z)$ is not finitely generated for some $i$.
In particular, $\widetilde X$ does not have the homotopy type of a
finite CW complex. 
This contradiction completes the proof of the theorem.
\end{proof}

\begin{remark}
The above proof shows that an a priori slightly stronger version of Theorem \ref{thm:Aomoto-intro} holds true: $b_1(X)\neq 0$ and for any finite connected \'etale cover $X'\to X$ and any nonzero $\omega'\in H^0(X',\Omega_{X'}^1)$, the Aomoto complex $(H^\ast(X',\C),\wedge \omega')$ is exact, but neither $X$ nor $X'$ carry real closed one-forms without zeros.
\end{remark}

\subsection{Three-dimensional smoothness conjecture for simply connected fibers} \label{subsec:thm:3-folds}

Combining arguments used in this paper with \cite{HS,pietig}, we obtain the following, which shows that counterexamples to the smoothness conjecture in \cite{FdBK} with simply connected fibers, constructed in Theorem \ref{thm:main.invis.thm} in dimension at least $5$, do not exist in dimension three.

\begin{theorem} %[\cite{HS21}]
\label{thm:3-folds}
Let $Y$ be a smooth, complex, projective threefold and  $g\colon Y\to \bP^1$  a morphism with simply connected fibers.
Assume that $g$ is a $\Q$-homology fiber bundle. 
Then $g$ is smooth.
\end{theorem}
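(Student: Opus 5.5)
The plan is to reduce to the known three-dimensional case of Kotschick's conjecture, as in \cite{HS,pietig}, by the same base change used in Section~\ref{sec:applications}. First I would upgrade the hypothesis. Since the fibers are simply connected, $\pi_1(Y)=1$, and each $R^ig_*\Q$ is a local system on $\bP^1$, hence trivial. Over a small disc $U$ around a critical value, $g^{-1}(U)$ retracts onto the central fiber $Y_0$. I would like to say that $R^ig_*\Z$ is also a local system. This requires ruling out torsion phenomena in the specialization map $H^*(Y_0,\Z)\to H^*(Y_t,\Z)$. I would not prove this directly. The argument below only uses $\Q$-coefficients where possible, and appeals to integrality only through Whitehead over $\Q$ for simply connected spaces. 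That version does not give a homotopy equivalence, so I would avoid needing one.

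Next, choose an elliptic curve $E$ and a double cover $E\to\bP^1$ whose branch points avoid the critical values of $g$. Set $X\coloneq Y\times_{\bP^1}E$ with $f\colon X\to E$. As in the proof of Theorem~\ref{thm:main2}, $X$ is a smooth projective threefold. The map $f$ is a $\Q$-homology fiber bundle with simply connected fibers, and by Smale's theorem $f$ induces an isomorphism on $\pi_1$. Hence $f$ is the Albanese map and $b_1(X)=2$. The key input from \cite{HS} is that for smooth projective threefolds whose Albanese map is a fibration over an elliptic curve satisfying a suitable topological condition, the holomorphic one-forms pulled back from $E$ have no zeros. The relevant condition is the exactness of the Aomoto complexes on all finite étale covers, or a $\Q$-homology-bundle condition as used in \cite{pietig}. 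I would verify the hypothesis of their result as follows. Finite étale covers of $X$ come from covers $E'\to E$, and $X'\to E'$ is again a $\Q$-homology fiber bundle. Then $Rf'_*\Q$ is a sum of shifted trivial local systems, by the decomposition theorem as in Lemma~\ref{lemma:pushforward} and Corollary~\ref{cor:sum}. Therefore $H^*(X',\C)$ is a free $H^*(E',\C)$-module, and the Aomoto complex is exact exactly as in Proposition~\ref{prop:vanishing}. This establishes Property~\condref{C} for $X$.

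Then I would invoke the three-dimensional result of \cite{HS} (completed in \cite{pietig}), which states that Property~\condref{C} implies Property~\condref{A} for smooth projective threefolds. In fact their argument shows that $f^*\omega_E$ has no zeros, which is the output I need. This would be the main obstacle. Theorem~\ref{thm:Aomoto-intro} shows that \condref{C}$\Rightarrow$\condref{A} fails in dimension $5$, so I must check that the threefold statement in \cite{HS,pietig} really applies in this form. If it does not, I would instead use the structure theory in \cite{HS}: run the MMP relative to $E$, use that the fibers are simply connected, hence not elliptic, to exclude the multiple-fiber obstructions of the type in Section~\ref{sec2}, and conclude that $f$ is smooth.

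Finally, once every nonzero holomorphic one-form $f^*\omega_E$ is nowhere vanishing, $f$ is a submersion. Since $E\to\bP^1$ is étale over the critical values of $g$, the map $g$ is locally isomorphic to $f$ there, so $g$ is smooth.
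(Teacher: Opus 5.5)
Your proposal is correct and follows essentially the same route as the paper. The paper also base changes along a double cover $E\to\bP^1$ branched away from the critical values of $g$, and uses the decomposition theorem together with triviality of the $R^if_*\Q$ (pulled back from $\bP^1$) to get freeness of $H^\ast(X',\Q)$ over $H^\ast(E',\Q)$. This gives exact Aomoto complexes on every finite \'etale cover, all of which come from $E'\to E$ since $\pi_1(X)\cong\pi_1(E)$. The paper then applies \cite[Theorem 1.4]{HS} (see also \cite[Theorem 1.4(iv)]{pietig}) to conclude that $f^\ast\omega$ has no zeros, so $f$ and hence $g$ are smooth.

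Your preliminary discussion of $\Z$-coefficients and your MMP fallback are unnecessary, since that theorem applies directly. Even the weaker form \condref{C}$\Rightarrow$\condref{A} would suffice here: $h^{1,0}(X)=1$, so any nowhere-vanishing holomorphic one-form on $X$ is a nonzero multiple of $f^\ast\omega_E$.
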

\begin{proof}
Let $E$ be an elliptic curve and  $E\to \bP^1$  a double cover whose critical values are disjoint from the critical values of $g$.
Let $X\coloneq Y\times_{\bP^1}E$ with natural map $f\colon X\to E$.
Since $g$ and hence $f$ are $\Q$-homology fiber bundles, the decomposition theorem (see \cite{BBD}) implies $Rf_\ast \Q_X=\oplus R^if_\ast \Q_X[-i]$.
Since $f$ is a base change of $g$ and $\bP^1$ is simply connected, $R^if_\ast \Q_X\cong \Q_E^{\oplus r_i}$ is a trivial local system for all $i$, cf.~Corollary \ref{cor:sum}.
It follows that $H^\ast(X,\Q)$ is a free $H^\ast(E,\Q)$-module, cf.~Corollary \ref{cor:free}.
Hence the Aomoto complex $(H^\ast(X,\C),\wedge f^\ast \omega)$ is exact for every nonzero $\omega\in H^0(E,\Omega^1_E)$.
Since $g$ (and hence $f$) has simply connected fibers, $f$ induces an isomorphism $\pi_1(X)\cong \pi_1(E)$.
In particular, any finite \'etale cover $\pi\colon X'\to X$ is induced by a finite \'etale cover $E'\to E$.
Repeating the above argument for $E'$ in place of $E$ then shows that $(H^\ast(X',\C),\wedge \pi^\ast f^\ast \omega)$ is exact.
At this point \cite[Theorem 1.4]{HS} (see also \cite[Theorem 1.4(iv)]{pietig}) implies that $f^\ast \omega$ has no zeros, and so $f$ is smooth.
This forces $g$ to be smooth as well.
\end{proof}

\section*{AI disclosure}
The mathematical content of the paper is due to the authors. 
 ChatGPT was used by one of us to assist with parts of the drafting process. 

\section*{Acknowledgements} 
We are grateful to Javier Fern\'andez de Bobadilla, Feng Hao, Claudio Llosa--Isenrich, Simone Noja, Simon Pietig,  Pierre Py,  and Yang Su for helpful conversations and references.

MC is partially supported by the Universit\`a degli Studi di Bari Aldo Moro and is a member of INdAM-GNSAGA.
Partial  financial support  to JK   was provided  by the Simons Foundation   (grant number SFI-MPS-MOV-00006719-02).
SS received funding from the European Research Council (ERC) under the European Union’s Horizon 2020 research and innovation programme under grant agreement No.~948066 (ERC-StG RationAlgic).
The research was partly conducted in the framework of the DFG-funded research training group RTG 2965: From Geometry to Numbers, Project number 512730679. 
%%%%%%%%%%%%%%%%%%%%%%%%%%%%%

%------------------------------------------------------------------
\end{document}